\def\Vol{{\rm Vol}}
\def\HH{{\rm H}}
\def\OO{{\mathcal O}}
\def\LL{{\mathcal L}}
\def\ZZ{{\mathbb Z}}
\def\RR{{\mathbb R}}
\def\PP{{\mathbb P}}
\def\CC{{\mathbb C}}
\def\QQ{{\mathbb Q}}
\newtheorem{lemma}{Lemma}[section]
\newtheorem{theorem}[lemma]{Theorem}
\newtheorem{corollary}[lemma]{Corollary}
\newtheorem{conjecture}[lemma]{Conjecture}
\newtheorem{proposition}[lemma]{Proposition}
\theoremstyle{definition}
\newtheorem{definition}[lemma]{Definition}
\newtheorem{question}[lemma]{Question}
\newtheorem{remark}[lemma]{Remark}
\theoremstyle{remark}
\newtheorem*{proof*}{Proof}
\numberwithin{equation}{section}
\title[On the conjecture of King]{On the conjecture
of King for smooth toric Deligne-Mumford stacks}
\author{Lev Borisov and Zheng Hua}
\address{Department of Mathematics \\ University of Wisconsin \\
  Madison \\ WI \\ 53706 \\ USA\\{\tt borisov@math.wisc.edu}\\
{\tt hua@math.wisc.edu}  }
\thanks{Lev Borisov has been partially supported
by the National Science Foundation under grant No.\ DMS-0456801.}
\begin{document}

\begin{abstract}
We construct full strong exceptional
collections of line bundles on smooth toric Fano Deligne-Mumford
stacks of Picard number at most two and of 
any Picard number in dimension two. 
It is hoped that the approach of this paper 
will eventually
lead to the proof of the existence of such collections
on all smooth toric nef-Fano Deligne-Mumford stacks.
\end{abstract}

\maketitle

\section{Introduction}
It has been suggested by Alastair King in \cite{King} that
every smooth toric variety has a full strong exceptional collection 
of line bundles. While this turned  out to be false, see \cite{Perling1},
it is still natural to conjecture that every smooth
nef-Fano toric variety
possesses such a collection, and there is some numerical
evidence towards it. Here a variety is called nef-Fano (also often
referred to as weak Fano) if its anticanonical class is nef and big,
though not necessarily ample.
We refer the readers to the introduction
section of
\cite{CrawSmith} for the more detailed exposition of this area.
In this  paper we 
propose to extend the conjecture of King to smooth toric 
Deligne-Mumfod stacks, which were defined in \cite{BCS}.

\smallskip\noindent
{\bf Conjecture \ref{fsec.conj}.}
\emph{Every smooth nef-Fano toric DM stack possesses a full
strong exceptional collection of line bundles.}
\smallskip

There are multiple advantages to working with stacks rather 
than varieties in the context of this conjecture. 
Smooth toric DM stacks behave like smooth toric varieities
in many ways, so it is plausible that if Conjecture
\ref{fsec.conj} holds in the case of varieties, then it holds
in this more general setting, at least when the stacks 
are generically schemes. On the other hand, 
while there are only finitely many smooth toric nef-Fano varieties 
in any given dimension, there are infinitely many smooth
toric Fano stacks, and they correspond to nice
combinatorial data of simplicial convex lattice polytopes.
Consequently, working with stacks allows one to test the
conjecture on numerous families of examples, and to
concentrate on the more essential features
of the Fano condition. Last, but not least, stacks appear
naturally from the point of view of homological mirror
symmetry. For example, it is natural to 
try to extend the work of \cite{Abouzaid} to this generality,

We have been able to construct full strong exceptional
collections of line bundles for all smooth toric Fano DM 
stacks $\PP_{\bf\Sigma}$ of Picard number
at most two or, significantly, of dimension at most two. 
This dimension two case is of special importance
since it is related to (noncompact) toric Calabi-Yau
stacks of dimension three.

The main ingredient of the argument is 
a convex polytope $P$ in ${\rm Pic}(\PP_{\bf\Sigma})\otimes \RR$
which is to be thought of as a window into
${\rm Pic}(\PP_{\bf\Sigma})$.
For a generic point $p\in {\rm Pic}(\PP_{\bf\Sigma})\otimes \RR$,
we define the strong exceptional collection $S$ as the 
set of line bundles such that the corresponding points in
${\rm Pic}(\PP_{\bf\Sigma})\otimes \RR$ lie in $p+P$.
In other words, $S$ is the set of line bundles that we can 
see through the $P$ window, when it is shifted by $p$.
We then move $p$ and $p+P$, and as new line bundles 
appear in the window, we use Koszul complexes 
to generate them from the line bundles that we have already
seen. In the Picard number one case, $P$ is a segment, and in
the Picard number two case it is a parallelogram,
irrespective of the dimension of $\PP_{\bf\Sigma}$.
In the case of Picard number three and dimension two,
the polytope $P$ is a $10$-gonal prism, and careful
arguments of convex geometry are needed to establish
its various properties. For arbitrary Picard numbers and
dimension two, $P$ is a zonotope, i.e. a Minkowski sum
of line segments.

One key property of the polytope $P$ is that the differences
between all pairs of its interior points give acyclic
line bunldles. To prove this, we introduce the notions of strong
acyclicity and forbidden cones, see Definition \ref{forbidden}.
This approach follows the calculations of Danilov \cite{Danilov}
and is similar to the recent work of Perling \cite{Perling2} in 
the scheme setting. The notion of strong acyclicity allows one 
to reduce the calculations to questions of convex geometry.

The paper is organized as follows. In Section \ref{sec2}
we briefly review the definition of smooth toric Deligne-Mumford
stacks. In Section \ref{sec3} we describe line bundles
on these stacks and state the main Conjecture \ref{fsec.conj}.
In Section \ref{sec4} we give a combinatorial description
of cohomology of a line bundle on a smooth toric Deligne-Mumford
stack and introduce the notions of strong acyclicity and forbidden
cones. In Section \ref{sec5} we describe the construction
in the cases of Picard number one and two. Sections
\ref{sec.dP.prelim} and \ref{sec.dP} treat the case of smooth 
toric del Pezzo DM stacks. The former
section contains the calculations in the quotient of 
${\rm Pic}_\RR(\PP_{\bf\Sigma})$ by the span 
of the canonical class $\RR K$,
and the latter finishes the argument. Section \ref{sec.ex}
describes our construction in the case of dimension two 
and Picard number three.  Finally, in Section \ref{comments}
we briefly describe the difficulties one encounters when 
one tries to extend the method to higher dimension.

{\bf Acknowledgments.} We learned about this conjecture
(in the case of varieties) from Alastair Craw. We would like 
to thank Angelo Vistoli for helpful remarks and Rosa Mir\'o-Roig 
for a useful reference. 

\section{An overview of toric DM stacks}\label{sec2}
Let $N$ be a finitely generated abelian group and let 
$\Sigma$ be a complete
simplicial fan in $N$ (which is simply a pullback of a simplicial fan $\Sigma_{\rm free}$ in $N_{\rm free}=
N/{\rm torsion}(N)$). If 
one chooses
a non-torsion element $v$ in 
each of the one-dimensional cones of $\Sigma$, one gets 
the data of a
complete stacky fan ${\bf\Sigma}=(\Sigma,\{v_i\})$, 
see \cite{BCS}. To these data one 
can then associate a smooth toric Deligne-Mumford stack
$\PP_{\bf\Sigma}$ 
whose coarse moduli space is the proper simplicial toric
variety given by $\Sigma_{\rm free}$.

We will assume from now on that $N$ has no torsion,
to simplify the discussion, although it appears that 
the general case is not very different. This 
assumption will allow us to avoid 
the technicalities of the derived Gale duality of \cite{BCS}.
The toric Deligne-Mumford stack $\PP_{\bf\Sigma}$ is 
obtained by a stacky version of the Cox's homogeneous
coordinate ring construction of 
\cite{Cox}. More specifically, if $\Sigma$
has $n$ one-dimensional cones, then we have a map
$$
\rho\colon \ZZ^n\to N
$$
defined by $(l_1,\ldots, l_n)\mapsto \sum_i l_i v_i$ where
$v_i$ are the chosen elements of ${\bf\Sigma}$. 
We dualize to get an injection 
$$
\rho^*\colon N^*\to \ZZ^n
$$
and we denote the cokernel of $\rho^*$ by ${\rm Gale}(N)$.
The group ${\rm Gale}(N)$  is a finitely generated
abelian group of rank $n-{\rm rk}(N)$ and it
has torsion if and only if $\rho$ is not surjective. We 
define the abelian complex algebraic group $G$ by 
$$
G:={\rm Hom}({\rm Gale}(N),\CC^*).
$$
The group $G$ is (non-canonically) isomorphic to a product of 
$(\CC^*)^{n-{\rm rk}(N)}$ and a finite abelian group.

The map $\rho^*$ induces an injection 
$$G\subseteq (\CC^*)^n$$
and an element $(\lambda_1,\ldots,\lambda_n)\in (\CC^*)^n$
lies in $G$ if and only if 
$$
\prod_{i=1}^n \lambda_i^{w\cdot v_i}=1
$$
for all $w\in N^*$, where $\cdot$ denotes the natural
pairing.

Consider the open set $U$ in $\CC^n$ defined as follows.
A point $(z_1,\ldots,z_n)$ lies in $U$ if and only if 
there exists a cone in $\Sigma$ which contains 
all $v_i$ for which $z_i=0$.
It turns out that the action of $G$ has only finite isotropy 
subgroups on $U$, and  
$\PP_{\bf\Sigma}$ is then defined as the stack quotient
$[U/G]$, see \cite{BCS}. 


\section{Derived category of toric stacks and King's conjecture}
\label{sec3}
We keep the notations from the previous section.
In this section we will describe some of the known results
about the derived category of coherent sheaves  on $\PP_{\bf\Sigma}$ and will formulate the  conjecture,
whose original version is due to Alastair King, \cite{King}.
See \cite{CrawSmith} for a short review of the related results.

The category of coherent sheaves on $\PP_{\bf\Sigma}$ 
is equivalent to the category of 
$G$-equivariant sheaves on $U$, see \cite[7.12]{Vistoli}.
In particular, the line bundles on $\PP_{\bf\Sigma}$ have the 
following explicit description.

\begin{definition}\label{linebundles}
For each $(r_1,\ldots,r_n)\in\ZZ^n$ consider the trivial line bundle 
$\CC\times U \to U$ with the $G$-linearization 
$G\times \CC\times U\to \CC\times U$  given by 
$$
((\lambda_1,\ldots,\lambda_n),t, (z_1,\ldots,z_n))
\mapsto (t\prod_{i=1}^n \lambda_i^{r_i},
(\lambda_1z_1,\ldots,\lambda_nz_n)).
$$
By \cite{Vistoli}, this gives a line bundle on $\PP_{\bf\Sigma}$.
We will denote it by $\OO(\sum_i r_iE_i)$.
\end{definition}

\begin{remark}
We will implicitly identify line bundles and invertible 
sheaves of their regular sections throughout the paper.
\end{remark}

\begin{proposition}\label{allaregiven}
All line bundles on $\PP_{\bf\Sigma}$ are 
given by the construction of Definition \ref{linebundles}.
The Picard group of $\PP_{\bf\Sigma}$ is 
isomorphic to the quotient of $\ZZ^n$ with basis $(E_i)$
by the subgroup of elements of the form 
$$
\sum_{i=1}^n (w\cdot v_i)E_i
$$
for all $w\in N^*$.
\end{proposition}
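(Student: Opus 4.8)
The plan is to exploit the equivalence between coherent sheaves on $\PP_{\bf\Sigma}$ and $G$-equivariant coherent sheaves on the open set $U\subseteq\CC^n$. Under this equivalence, line bundles on $\PP_{\bf\Sigma}$ correspond to $G$-equivariant line bundles on $U$, so the first step is to show that every $G$-equivariant line bundle on $U$ is $G$-equivariantly trivial as a line bundle on $U$ (forgetting the $G$-action), and hence is one of the bundles of Definition \ref{linebundles}. For this I would first argue that $U$ has trivial Picard group: the complement $\CC^n\setminus U$ is a union of coordinate subspaces of codimension at least $2$ (a point fails to lie in $U$ only when the vanishing coordinates do not span a cone, and since $\Sigma$ is a complete simplicial fan every single ray is a cone, so at least two coordinates must vanish), whence ${\rm Pic}(U)={\rm Pic}(\CC^n)=0$ and moreover every regular invertible function on $U$ is a nonzero constant, since $\OO(U)=\OO(\CC^n)=\CC[z_1,\dots,z_n]$. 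Therefore any line bundle on $U$ is trivial, and a $G$-equivariant structure on the trivial bundle $\CC\times U$ is the same as a lift of the $G$-action, i.e. a map $G\times U\to\CC^*$ satisfying the cocycle condition; since $\OO^*(U)=\CC^*$, such a cocycle is just a character of $G$. Finally, because $G={\rm Hom}({\rm Gale}(N),\CC^*)$ with ${\rm Gale}(N)$ the cokernel of $\rho^*\colon N^*\to\ZZ^n$, the character group of $G$ is canonically ${\rm Gale}(N)=\ZZ^n/\rho^*(N^*)$, and the character attached to $(r_1,\dots,r_n)\in\ZZ^n$ acts exactly by $(\lambda_i)\mapsto\prod_i\lambda_i^{r_i}$ as in Definition \ref{linebundles}. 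This proves the first sentence and identifies ${\rm Pic}(\PP_{\bf\Sigma})$ with the character group $X^*(G)\cong\ZZ^n/\rho^*(N^*)$, which is precisely the quotient of $\ZZ^n$ with basis $(E_i)$ by the subgroup generated by the elements $\sum_i(w\cdot v_i)E_i$ for $w\in N^*$.

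I expect the main obstacle to be the step asserting that $G$-equivariant line bundles on $U$ are the only line bundles on $\PP_{\bf\Sigma}$, i.e. the clean passage from the stack to the quotient presentation. One must make sure the equivalence of \cite[7.12]{Vistoli} genuinely carries line bundles to line bundles (it does, being an equivalence of symmetric monoidal abelian categories respecting duals), and one must handle the equivariant cocycle bookkeeping carefully: two characters of $G$ give isomorphic equivariant structures if and only if they are equal, because any equivariant automorphism of $\CC\times U$ is again an element of $\OO^*(U)=\CC^*$, which is central and thus cannot conjugate one character into a different one. Granting these points, the remaining verifications — that $\CC^n\setminus U$ has codimension $\ge 2$, that $\OO^*(\CC^n)=\CC^*$, and that $X^*(G)={\rm Gale}(N)$ — are routine, so the proposition follows.
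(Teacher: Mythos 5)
Your proposal is correct and follows essentially the same route as the paper: reduce to $G$-equivariant line bundles on $U$, observe that $\mathrm{Pic}(U)=0$ and $\OO^*(U)=\CC^*$ because $\CC^n\setminus U$ has codimension at least two, conclude that linearizations of the trivial bundle are characters of $G$, and identify $X^*(G)$ with $\mathrm{Gale}(N)=\ZZ^n/\rho^*(N^*)$. The only cosmetic difference is that the paper argues $\mathrm{Pic}(U)=0$ by noting $U$ is a smooth toric variety whose invariant divisors $\{z_i=0\}$ are all principal, while you argue directly from codimension of the complement; your extra remark that distinct characters give non-isomorphic linearizations (because equivariant automorphisms of the trivial bundle are constants) is a useful point the paper leaves implicit.
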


\begin{proof}
The line bundles on $\PP_{\bf\Sigma}$ correspond
to $G$-equivariant line bundles on $U$. 
The open set $U$ is a smooth toric variety, so its Picard
group is generated by invariant divisors $z_i=0$, which 
are clearly trivial. Consequently,
every line bundle on $U$ can be trivialized. 
To classify line bundles on $\PP_{\bf\Sigma}$ one
thus needs to classify the $G$-linearizations of the 
trivial line bundle $\CC\times U\to U$. 

For every $g\in G$, we have 
$$g\colon (t,{\bf z})\mapsto (t\, r_g({\bf z}),g{\bf z}).$$
The function $r_g$ is an invertible regular function on $U$.
Since $U$ is obtained from $\CC^n$
 by removing subspaces of codimension
at least two, the ring of regular functions on $U$ is 
$\CC[z_1,\ldots,z_n]$, and any invertible regular function
on $U$ is 
a nonzero constant. Then the definition of $G$-linearization shows
that the map $G\to \CC^*$ given by $g\mapsto r_g$ 
gives a line bundle if and only if it is a character of $G$.
The characters of $G$ are given by ${\rm Gale}(N)$, which
has the desired description in terms of $E_i$.
\end{proof}

The following result has been shown in \cite{BH}.
\begin{theorem}\label{bh}
The derived category of $\PP_{\bf\Sigma}$ is generated
by line bundles. 
\end{theorem}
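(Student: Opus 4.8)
The plan is to reduce the statement to a $G$-equivariant version of the well-known fact that $D^b$ of a quasi-affine toric variety is generated by the structure sheaf, and then to exploit the explicit quotient presentation $\PP_{\bf\Sigma}=[U/G]$ together with the description of line bundles in Definition \ref{linebundles}. Concretely, I would argue that every object of $D^b(\PP_{\bf\Sigma})$, equivalently every bounded complex of $G$-equivariant coherent sheaves on $U$, lies in the thick (triangulated, idempotent-complete) subcategory generated by the line bundles $\OO(\sum_i r_i E_i)$. Since $U\subseteq\CC^n$ is obtained by deleting the loci where no cone of $\Sigma$ contains all the vanishing coordinates, $U$ is covered by the principal open affines $U_\sigma=\{z_i\neq 0 \text{ for } v_i\notin\sigma\}$ ranging over the maximal cones $\sigma$ of $\Sigma$, and each $U_\sigma$ is $G$-invariant. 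The first step is therefore a \v{C}ech/Mayer--Vietoris induction on this finite cover: an object $\mathcal F$ sits in a complex built from its restrictions to the various $U_\sigma$ and their intersections, so it suffices to generate $j_{\sigma\ast}(\text{line bundle on }U_\sigma)$-type objects, and more generally the pushforwards of equivariant sheaves from each open piece.

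The second step is to handle a single affine piece $U_\sigma$ with its $G$-action. Here I would use the fact that $U_\sigma$ is a smooth affine toric variety on which a torus (times finite group) acts, so that $D^b_G(U_\sigma)$ is generated by the equivariant line bundles, which are exactly the restrictions of the $\OO(\sum r_i E_i)$; this is essentially the statement that the category of finitely generated graded modules over a (Laurent-)polynomial ring, graded by the character group of $G$, has a generator given by the free modules of rank one, i.e. by the shifts of the ground ring. The pushforward from $U_\sigma$ to $U$ of such a line bundle is the restriction to $U$ of a Cox-module sheaf, and one checks using the codimension-$\geq 2$ argument from the proof of Proposition \ref{allaregiven} that these pushforwards, together with resolutions by sums of $\OO(\sum r_i E_i)$ over the homogeneous coordinate ring $\CC[z_1,\dots,z_n]$, stay inside the subcategory generated by the line bundles. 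Iterating the resolution of an arbitrary graded module by free graded modules, truncated in high homological degree (possible since $\CC[z_1,\dots,z_n]$ has finite global dimension), produces the required finite generation.

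The main obstacle I expect is the bookkeeping needed to make the \v{C}ech induction interact correctly with the free resolutions: one must ensure that the resolutions chosen on overlaps $U_{\sigma}\cap U_{\sigma'}$ are compatible (or can be made compatible after passing to the derived category), so that assembling the local data genuinely produces a global complex of line bundles rather than merely a local statement. The clean way to finesse this is to work directly on the ambient quotient stack $[\CC^n/G]$ (whose derived category is generated by the line bundles $\OO(\sum r_i E_i)$ by the graded-module argument above) and then show that restriction along the open immersion $\PP_{\bf\Sigma}\hookrightarrow[\CC^n/G]$ is essentially surjective on generators, using that the complement has codimension at least two; this replaces the delicate gluing with a single excision/localization argument. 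Since this theorem is quoted from \cite{BH}, I would at this point simply invoke that reference for the details, having indicated the shape of the proof.
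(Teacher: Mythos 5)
The paper itself offers no argument for this theorem: the ``proof'' is just the citation of Corollary 4.8 of \cite{BH}, so there is nothing internal to compare your sketch against line by line. That said, your proposal is essentially sound, and its final, ``clean'' version is in substance the standard argument underlying the cited result: every $G$-equivariant coherent sheaf on $U$ is the restriction of (the sheaf associated to) a finitely generated ${\rm Gale}(N)$-graded module over the Cox ring $\CC[z_1,\ldots,z_n]$; such a module has a finite graded free resolution by direct sums of rank-one twists because the polynomial ring has finite global dimension; and restriction along the open immersion $[U/G]\hookrightarrow[\CC^n/G]$ is exact, so the resolution becomes a finite resolution of the given sheaf by direct sums of the line bundles $\OO(\sum_i r_iE_i)$ of Definition \ref{linebundles}, whence the thick subcategory generated by line bundles is all of $D^b(\PP_{\bf\Sigma})$. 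Your first strategy (\v{C}ech/Mayer--Vietoris over the invariant affines $U_\sigma$) is where the only real danger sits --- exactly the compatibility-of-local-resolutions issue you flag --- but you correctly recognize that it is superfluous once one works on the ambient quotient stack and restricts; the one point worth stating explicitly in that cleaner route is the equivariant coherent extension step (every coherent sheaf on the open substack comes from a finitely generated graded module, i.e.\ Cox's description of coherent sheaves in the stacky setting), rather than a codimension-two argument, since general coherent sheaves, unlike line bundles, are not controlled by codimension-two complements. With that point made precise, your outline matches the argument of \cite{BH} that the paper invokes.
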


\begin{proof}
See Corollary 4.8 of \cite{BH}.
\end{proof}

The focus of this paper is on constructing, in some
special cases, collections of line bundles on
$\PP_{\bf\Sigma}$ which satisfy certain cohomological
properties.
\begin{definition}\label{secdef}
A sequence of line bundles $(\LL_1,\ldots,\LL_r)$ 
on  $\PP_{\bf\Sigma}$ is called a strong exceptional
collection if
$$
{\rm Ext}^i_{\PP_{\bf\Sigma}}(\LL_{j_1},\LL_{j_2})=0
$$
unless $i=0$ and $j_1\leq j_2$.
\end{definition}

\begin{remark}\label{anyorder}
A subset $S$ of 
${\rm Pic}(\PP_{\bf\Sigma})$ can be indexed to form a
strong exceptional
collection, as long as 
${\rm Ext}^i_{\PP_{\bf\Sigma}}(\LL_1,\LL_2)=0$
for all $i>0$ and all $\LL_1$ and $\LL_2$ in $S$. Indeed,
the existence of nonzero 
${\rm Hom}_{\PP_{\bf\Sigma}}(\LL_1,\LL_2)$ induces
a partial order on the set $S$, which can 
then be extended to a linear order. 
\end{remark}

\begin{definition}\label{fsec}
A finite set $S$ of line bundles 
on $\PP_\Sigma$ is called a full strong exceptional
collection if 
$${\rm Ext}^i_{\PP_{\bf\Sigma}}(\LL_1,\LL_2)=0
~~{\rm for~all~}i>0{\rm~and~all~}\LL_1,\LL_2\in S,$$
and the line bundles in $S$ generate the derived category
of $\PP_{\bf\Sigma}$.
\end{definition}

It is only natural to ask the following question.
\begin{question}\label{big}
Does $\PP_{\bf\Sigma}$ possess a full strong exceptional
collection of line bundles?
\end{question}

\begin{remark}
Kawamata has shown that the derived category of 
$\PP_{\bf\Sigma}$ possesses a full exceptional collection
of objects, see \cite{Kawamata}. 
In his construction, the objects are typically
sheaves rather than line bundles, and the collection
is only exceptional, rather than strong exceptional
(some nontrivial higher $\rm Ext$ spaces are allowed).
\end{remark}

\begin{remark}
There is an example of a smooth toric surface which 
does not admit a full strong exceptional collection of line bundles,
see \cite{Perling1}. A quick review of the related results
can be found in \cite{CrawSmith}.
It has been subsequently suggested, 
that in the case of varieties a sufficient condition for the
positive answer to Question \ref{big} is that $\PP_{\Sigma}$
is a Fano variety. We will argue in this paper that
it is reasonable to expect that Question \ref{big} has a 
positive answer for all nef-Fano Deligne-Mumford stacks,
to be defined below. 
\end{remark}

\begin{definition}\label{nefFano}
A toric Deligne-Mumford stack $\PP_{\bf\Sigma}$
is called Fano if the chosen points $v_i$ are precisely the 
vertices of a simplicial convex polytope in $N_\RR$.
More generally, it is called nef-Fano if all $v_i$ 
lie on the boundary 
of  $$\Delta={\rm ConvexHull}(v_1,\ldots,v_n)$$ 
but are not necessarily its vertices,
nor is $\Delta$ assumed to be simplicial.
\end{definition}

\begin{remark}
The terminology of Definition  \ref{nefFano} is 
justified as follows. A positive power of the 
anticanonical line bundle
on $\PP_{\bf\Sigma}$ is a pullback of a line bundle
on the coarse moduli space. The stack $\PP_{\bf\Sigma}$
is Fano (resp. nef-Fano) if the corresponding Cartier divisor
is ample (resp. nef and big). Since we do not use this interpretation
of the definition, we leave the verification of the above statement
to the reader.
\end{remark}

\begin{remark}
In dimension two case, we call the Fano stacks del Pezzo,
in accordance with the common terminology for varieties.
\end{remark}

We are now ready to formulate the stack version of 
King's conjecture.
\begin{conjecture}\label{fsec.conj}
Every smooth nef-Fano toric DM stack possesses a full
strong exceptional collection of line bundles.
\end{conjecture}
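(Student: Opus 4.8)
The plan is to prove the conjecture by the \emph{window} construction outlined in the introduction. Fix the stacky fan ${\bf\Sigma}$ of a smooth nef-Fano toric DM stack and work in the real vector space $V={\rm Pic}(\PP_{\bf\Sigma})\otimes\RR$. The goal is to produce, for each such stack, a bounded convex polytope $P\subset V$ and a generic point $p\in V$ so that the set $S$ of line bundles whose image in $V$ lies in $p+P$ is a full strong exceptional collection. Two things must be verified: that $S$ is strong exceptional, and that $S$ generates $D^b(\PP_{\bf\Sigma})$. For the first, by Remark \ref{anyorder} it suffices to check that ${\rm Ext}^i(\LL_1,\LL_2)=0$ for all $i>0$ and all $\LL_1,\LL_2\in S$; since the class of $\LL_1-\LL_2$ ranges over differences of lattice points of $p+P$, it is enough to arrange that $\LL-\LL'$ is acyclic whenever the classes of $\LL$ and $\LL'$ lie in the interior of $P$. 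This is where the combinatorial description of cohomology of Section \ref{sec4}, together with the notions of strong acyclicity and forbidden cones of Definition \ref{forbidden}, enters: one reduces the acyclicity of a line bundle to a statement about the position of its image relative to a finite arrangement of forbidden cones, and the required property of $P$ becomes the assertion that $P-P$ avoids that arrangement.

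For generation, I would let $p$ vary continuously: as $p$ moves, line bundles enter and leave the window $p+P$. Whenever a new line bundle $\LL$ appears, its class lies on the boundary of a translate of $P$, and one writes down a Koszul-type complex built from the vanishing loci $z_i=0$ (equivalently from the primitive collections of the fan) whose terms other than $\LL$ are twists whose classes are already in the current window. Exactness of this complex, again certified by the forbidden-cone analysis, puts $\LL$ in the triangulated subcategory generated by $S$; since line bundles generate $D^b(\PP_{\bf\Sigma})$ by Theorem \ref{bh}, it follows that $S$ does as well. The delicate point is to choose $P$ so that \emph{both} properties hold at once: $P$ must be large enough that every Koszul resolution it needs has all its remaining terms inside the window, yet constrained enough (in the sense that $P-P$ must miss the forbidden arrangement) that strong exceptionality survives.

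The shape of $P$ is forced by the combinatorics of the secondary fan of ${\bf\Sigma}$. In Picard number one the only freedom is a single ray, so $P$ is a segment; in Picard number two the forbidden cones organize into a complete fan whose maximal cones can be covered by a parallelogram, and this works in every dimension. In dimension two with arbitrary Picard number the correct object turns out to be a zonotope, a Minkowski sum of one segment per ray of the fan, with the surface geometry keeping the forbidden-cone arrangement under control; the Picard number three case can be carried out by hand with a $10$-gonal prism, which exhibits the mechanism. I expect the main obstacle — and the reason the general statement remains a conjecture — to be exactly the construction of $P$ in the presence of both high dimension and high Picard number: the forbidden cones no longer fit any obviously canonical polytope, the two requirements above pull in opposite directions, and the convex geometry needed to certify acyclicity and exactness of the Koszul complexes grows rapidly in complexity, as discussed in Section \ref{comments}.
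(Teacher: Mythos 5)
You have correctly recognized that the statement is, in fact, an open conjecture rather than a theorem: the paper does not prove it in full generality, but only in the special cases of Picard number at most two (Theorem \ref{Pic2}) and of dimension two (Theorem \ref{delPezzo}). Your account of the window construction, the role of the forbidden cones and strong acyclicity in certifying strong exceptionality, the moving-window argument with Koszul complexes for generation, the specific shapes of $P$ (segment, parallelogram, $10$-gonal prism, zonotope), and the obstruction to a general construction of $\widehat{P}$ faithfully reproduces the paper's own strategy and its stated open problem in Section \ref{comments}; this is the same approach as the paper, accurately acknowledging its incompleteness.
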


\begin{remark}\label{what}
From the general theory of exceptional collections,
the number of elements in a strong exceptional collection of line
bundles equals the rank of $K$-theory. For a smooth 
toric nef-Fano DM stack this rank in turn
equals ${{\rm rk} (N)}! \Vol(\Delta)$, 
where the volume is normalized so that 
the volume of $N_\RR/N$ is one, see for example \cite{BH2}.
\end{remark}


\section{Strongly acyclic line bundles}\label{sec4}

The following rather standard calculation provides a description 
of cohomology of a line bundle $\LL$ on $\PP_{\bf\Sigma}$.
For every 
${\bf r}=(r_i)_{i=1}^n\in \ZZ^n$ we denote by ${\rm Supp}
({\bf r})$ the simplicial complex on
$n$ vertices $\{1,\ldots,n\}$ which consists of all subsets 
$J\subseteq \{1,\ldots,n\}$ such that
$r_i\geq 0$ for all $i\in J$ and there exists a
cone of $\Sigma$  that contains all $v_i, i\in J$.
We will abuse notation to 
also denote by ${\rm Supp}({\bf r})$ the subfan of $\Sigma$
whose cones are the minimum cones of $\Sigma$ that contain all $v_i,i\in J$ for all subsets $J$
as above. It should be clear from the context whether 
${\rm Supp}({\bf r})$ refers to the simplicial complex or to
its geometric realization as a subfan of $\Sigma$. For example,
if all coordinates $r_i$ are negative then the simplicial
complex ${\rm Supp}({\bf r})$
consists of the empty set only, and its geometric
realization is the zero cone of $\Sigma$. 
In the other extreme case, if all $r_i$
are nonnegative then the simplicial complex 
${\rm Supp}({\bf r})$ encodes the fan $\Sigma$, which is its 
geometric realization.

\begin{proposition}\label{hp}
The cohomology $\HH^p(\PP_{\bf\Sigma},\LL)$
is isomorphic to the direct sum over all 
${\bf r}=(r_i)_{i=1}^n$ 
such that  $\OO(\sum_{i=1}^n r_i E_i)\cong \LL$ 
of the $({\rm rk}(N)-p)$-th reduced homology of the simplicial
complex ${\rm Supp}({\bf r})$.
\end{proposition}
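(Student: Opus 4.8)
The plan is to compute the cohomology of $\LL$ via its description as a $G$-equivariant line bundle on the open set $U \subseteq \CC^n$, using the \v{C}ech complex for the standard affine open cover of $U$ and then extracting the $G$-invariant (or rather, appropriately weighted) part.

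\medskip

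\textbf{Setup and reduction to a \v{C}ech computation.}
First I would recall that $\PP_{\bf\Sigma} = [U/G]$ and that coherent sheaves on the stack are $G$-equivariant sheaves on $U$, so $\HH^p(\PP_{\bf\Sigma},\LL) = \HH^p_G(U, \tilde\LL)$ where $\tilde\LL$ is the corresponding $G$-equivariant line bundle; since $G$ is reductive (a product of a torus and a finite group) this equals the $G$-invariant part of $\HH^p(U,\tilde\LL)$. Now $U$ is a smooth toric variety with an affine open cover by the $U_\sigma$ for $\sigma$ a maximal cone of $\Sigma$ (here $U_\sigma = \{(z_i) : z_i \neq 0 \text{ for } v_i \notin \sigma\}$), and more precisely $U = \bigcup_\sigma U_\sigma$. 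I would write down the \v{C}ech complex of $\tilde\LL = \OO(\sum r_i E_i)$ with respect to this cover. Because $U$ is quasi-affine and obtained from $\CC^n$ by removing a subset of codimension $\geq 2$, each $U_\sigma$ is just a localization of $\CC[z_1,\dots,z_n]$, and the global sections of $\OO(\sum r_i E_i)$ over $U_\sigma$ form a $\ZZ^n$-graded vector space that is explicitly a span of monomials $z^{\bf a}$ with $a_i \geq 0$ for $v_i \in \sigma$ and $a_i$ arbitrary otherwise, placed in $G$-weight determined by ${\bf a}$ and $\mathbf{r}$.

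\medskip

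\textbf{Isolating a single $G$-weight and identifying the complex.}
The torus $(\CC^*)^n$ acts on everything, refining the $G$-grading to a full $\ZZ^n$-grading, so the whole \v{C}ech complex splits as a direct sum over $\ZZ^n$. Fixing the $G$-weight corresponding to $\LL$ amounts, after the $(\CC^*)^n$ refinement, to summing over all $\mathbf{r} \in \ZZ^n$ with $\OO(\sum r_i E_i) \cong \LL$; and for each such $\mathbf{r}$ the relevant graded piece of $\Gamma(U_\sigma, \tilde\LL)$ is one-dimensional exactly when $r_i \geq 0$ for all $i$ with $v_i \in \sigma$ (equivalently, when $\sigma$ contains all $v_i$ with $i$ in the relevant index set), and zero otherwise. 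Thus for a fixed $\mathbf{r}$ the \v{C}ech complex becomes the simplicial cochain complex computing the cohomology of the subcomplex of the nerve of the cover consisting of those collections of maximal cones whose intersection is ``allowed'' — which, after unwinding, is precisely the combinatorial condition defining ${\rm Supp}(\mathbf{r})$. I would then match indices carefully: the \v{C}ech degree $p$ corresponds to reduced simplicial (co)homology in complementary degree, producing the shift $\mathrm{rk}(N) - p$, and by the universal coefficient theorem over a field reduced cohomology and reduced homology agree. The precise bookkeeping — relating the nerve of the $\{U_\sigma\}$ cover to ${\rm Supp}(\mathbf{r})$, getting the Alexander-duality-style degree reversal right, and handling the reduced vs.\ unreduced normalization at the empty set — is the main obstacle, and I would handle it by following the classical toric computation of Danilov \cite{Danilov} and checking the two extreme cases (all $r_i < 0$, all $r_i \geq 0$) against the known answers $\HH^{\mathrm{rk}(N)}$ of a point-ish complex and $\HH^0(\PP_{\bf\Sigma},\OO)$ respectively.

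\medskip

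I expect the essentially formal parts (the stack-to-equivariant translation, the reductivity argument, the $(\CC^*)^n$-refinement of the grading) to go through quickly, and the genuinely delicate step to be the identification of the \v{C}ech/nerve complex with the reduced chain complex of ${\rm Supp}(\mathbf{r})$ together with the correct cohomological shift, since that is where the combinatorics of which cones of $\Sigma$ contain which $v_i$ must be translated faithfully into the simplicial-complex language of the statement.
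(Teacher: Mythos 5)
Your reduction chain --- coherent sheaves on $\PP_{\bf\Sigma}$ equal $G$-equivariant sheaves on $U$, reductivity of $G$ gives $\HH^p(\PP_{\bf\Sigma},\LL)=(\HH^p(U,\LL))^G$, then compute via an explicit complex and split by the $\ZZ^n$-grading --- is exactly the paper's strategy. The one place you and the paper diverge is in the choice of complex, and that is precisely the place where you acknowledge you have not yet done the bookkeeping, so let me flag it as a genuine gap in the argument as written.

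You propose to use the ordinary \v{C}ech complex for the cover of $U$ by the affine opens $U_\sigma$ with $\sigma$ a maximal cone. The $\mathbf{r}$-graded piece of that complex is, as you say, the cochain complex of a subcomplex $K_{\mathbf{r}}$ of the nerve of the cover: the vertex set of $K_{\mathbf{r}}$ is the set of \emph{maximal cones}, and $\{\sigma_0,\ldots,\sigma_p\}$ is a face precisely when the intersection cone $\tau=\bigcap_j\sigma_j$ has $r_i\geq 0$ for every ray $v_i$ of $\tau$. But ${\rm Supp}(\mathbf{r})$ is a simplicial complex on the \emph{rays} $\{1,\ldots,n\}$. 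These are not the same complex, and passing from $K_{\mathbf{r}}$ to ${\rm Supp}(\mathbf{r})$ requires an actual argument: for instance, one must show that collapsing each fiber of the map ``tuple of maximal cones $\mapsto$ intersection cone'' (a poset/nerve-lemma type contractibility claim) yields a quasi-isomorphism, or else invoke the classical comparison of the \v{C}ech complex with the cone-indexed complex for toric varieties. The phrase ``which, after unwinding, is precisely the combinatorial condition defining ${\rm Supp}(\mathbf{r})$'' passes over exactly this step.

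The paper avoids the issue entirely by not using the nerve-indexed \v{C}ech complex at all. It resolves $\OO_U$ by the ``toric \v{C}ech complex'' whose terms are indexed by \emph{all} cones $\sigma\in\Sigma$ (not by tuples of maximal cones), with restriction maps and a sign rule determined by the face relation. Since $\Sigma$ is simplicial, a $d$-dimensional cone is the same data as a $d$-element subset of rays that spans a cone, so the $\mathbf{r}$-graded piece of this complex is \emph{on the nose} the reduced simplicial chain complex of ${\rm Supp}(\mathbf{r})$; the cohomological shift then falls out of counting the position of $\Gamma(U_{\{0\}},\OO)$ at the far right. So what the paper's choice of complex buys is that the identification you would have to prove by a nerve argument becomes a tautology, with only the degree bookkeeping left. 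If you want to carry your version through, you should either switch to the cone-indexed complex (and verify, as in Danilov, that it resolves $\OO_U$) or supply the quasi-isomorphism between $K_{\mathbf{r}}$ and the chain complex of ${\rm Supp}(\mathbf{r})$; as it stands the step is asserted rather than proved.
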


\begin{proof}
Consider the left exact functor ${\rm H}^0(\PP_{\bf\Sigma},\bullet)$ on the
category of $G$-equivariant quasi-coherent sheaves 
on $U$. It sends a $G$-equivariant sheaf on $U$ to the space 
of its $G$-invariant global sections. Hence, it is the composition
of the functor of global sections and the functor 
of taking $G$-invariants. Since $G$ is reductive,
the latter is exact, consequently, 
$$
{\rm H}^p(\PP_{\bf\Sigma},\LL)=
({\rm H}^p(U,\LL))^{G}.
$$
Recall that $\LL\cong \OO_U$ if one ignores the 
action of $G$. We can calculate $H^p(U,\OO)$ by resolving 
$\OO$ via a toric \v{C}ech complex. Specifically,
$U$ is a toric variety whose toric affine charts $U_\sigma$ 
are indexed by $\sigma\in\Sigma$. A point
$(z_1,\ldots, z_n)$ lies in $U_\sigma$ if and only if 
all $v_i$ for which $z_i=0$ lie in $\sigma$. Consequently,
$\Gamma(U_\sigma,\OO)$ 
has a monomial basis of $\prod_i z_i^{a_i}$
with $a_i\geq 0$ for all $v_i\in\sigma$ and $a_i\in \ZZ$ 
otherwise. The cohomology of $\OO$ on $U$ is 
naturally isomorphic to the cohomology of the 
toric \v{C}ech complex
\begin{equation}\label{toricech}
0\to \bigoplus_{\stackrel{\sigma\in\Sigma,}{{\rm dim}\sigma={\rm rk}(N)}}
\Gamma(U_\sigma,\OO) 
\to \bigoplus_{\stackrel{\sigma\in\Sigma,}{{\rm dim}\sigma=
{\rm rk}(N)-1}}
\Gamma(U_\sigma,\OO) 
\to\cdots\to
\Gamma(U_{\{0\}},\OO)\to 0.
\end{equation}
The maps in this complex are direct sums of the maps from $\Gamma(U_\sigma,{\mathcal O})$
to $\Gamma(U_{\sigma'},{\mathcal O})$ which 
are zero unless
$\sigma'$ is a codimension one face of $\sigma$. In
this case the map is, up to a sign, the restriction
map with the sign determined as follows. If
$$
\RR_{\geq 0} \sigma = \bigoplus_{j=1}^{\dim \sigma} 
\RR_{\geq 0} v_{i_j}, ~~
\RR_{\geq 0} \sigma' = \bigoplus_{j=1,j\neq k}^{\dim \sigma} 
\RR_{\geq 0} v_{i_j}~
$$
with $i_1<\ldots <i_{\dim \sigma}$, then the sign is $(-1)^k$.

This complex is graded by the characters of $(\CC^*)^n$,
i.e. by multidegree of the monomials. For any given 
collection ${\bf r}=(r_i)_{i=1}^n\in\ZZ^n$, the graded piece
of the complex
\eqref{toricech} at multidegree $\bf r$ is precisely the 
reduced homology complex of ${\rm Supp}({\bf r})$.
Indeed, the space of sections of $\mathcal O$ on $U_\sigma$
contains a one-dimensional graded piece $\CC\prod_i z_i^{r_i}$ if and only if $\sigma$ contains no
$v_i$ for which $r_i < 0$, i.e. the set $J$ of $i$ such that $v_i\in\sigma$ is a subset of the simplicial complex
${\rm Supp}({\bf r})$. Moreover, the maps in
\eqref{toricech} are the same as
in the reduced homology complex of ${\rm Supp}({\bf r})$.

It remains to show that taking $G$-invariants amounts
to only picking ${\bf r}$ with $\OO(\sum_{i=1}^nr_iE_i)\cong \LL$,
which follows from Definition \ref{linebundles} and the description
of $G$ in Section \ref{sec2}.
\end{proof}

\begin{remark}
For example, $\HH^0(\LL)$ only comes from $\bf r$ for 
which ${\rm Supp}({\bf r})$ is the entire fan $\Sigma$,
i.e. $\OO(\sum_{i=1}^n r_i E_i)\cong \LL$ with
${\bf r}\in \ZZ_{\geq 0}^n$.
In the other extreme case $\HH^{{\rm rk}(N)}(\LL)$ only 
appears when the simplicial complex 
${\rm Supp}({\bf r})=\{\emptyset\}$, i.e. when
$\OO(\sum_{i=1}^n r_i E_i)\cong \LL$ with all 
$r_i\leq -1$.
\end{remark}


As usual, we will call a line bundle acyclic if all of
its higher cohomology
groups vanish. Based on Proposition \ref{hp} we can describe all 
acyclic line bundles on $\PP_{\bf\Sigma}$ as follows.
For every subset $I\subseteq \{1,\ldots,n\}$ consider
the simplicial complex $C_I$ which encodes the cones of $\Sigma$, such that the indices
of all rays of the cone lie in $I$. In other words, this complex
is ${\rm Supp}({\bf r})$ where $r_i=-1$ for $i\not\in I$ and 
$r_i=0$ for $i\in I$. 
\begin{proposition}\label{iff}
Consider all proper subsets $I\subset \{1,\ldots,n\}$ such 
that  $C_I$ has nontrivial reduced homology. For each such
subset consider the set of line bundles on $\PP_{\bf\Sigma}$
of the form
$$
\OO(-\sum_{i\not\in I}E_i + \sum_{i\in I}
r_iE_i
-\sum_{i\not\in I}
r_iE_i)$$
where $r_i\in \ZZ_{\geq 0}$ for all $i$.
Then a bundle $\LL$ is acyclic if and only if it does not 
lie in any of the above sets.
\end{proposition}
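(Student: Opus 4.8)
The plan is to use Proposition \ref{hp} to reduce acyclicity to a statement about when the simplicial complexes ${\rm Supp}({\bf r})$ can have nontrivial reduced homology in degree ${\rm rk}(N)-p$ for some $p>0$, and then organize all such ${\bf r}$ according to which coordinates are negative. First I would fix a line bundle $\LL$ and note that, by Proposition \ref{hp}, $\LL$ fails to be acyclic precisely when there exists ${\bf r}=(r_i)\in\ZZ^n$ with $\OO(\sum_i r_iE_i)\cong\LL$ such that ${\rm Supp}({\bf r})$ has nontrivial reduced homology in some degree other than the top one ${\rm rk}(N)$ (degree ${\rm rk}(N)$ corresponds to $p=0$, i.e.\ $\HH^0$, which is allowed). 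So the key is: for which ${\bf r}$ does ${\rm Supp}({\bf r})$ carry reduced homology below the top degree?

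The main structural observation I would make is that the simplicial complex ${\rm Supp}({\bf r})$ depends on ${\bf r}$ only through the set $I=I({\bf r}):=\{i : r_i\geq 0\}$ of indices with nonnegative coordinate. Indeed, a subset $J\subseteq\{1,\dots,n\}$ lies in ${\rm Supp}({\bf r})$ iff $J\subseteq I$ and the $v_i,i\in J$ span a cone of $\Sigma$; this is exactly the defining condition for the complex $C_I$ introduced just before the statement. Hence ${\rm Supp}({\bf r})=C_I$ with $I=\{i:r_i\geq 0\}$. Therefore $\LL$ is non-acyclic iff there is some ${\bf r}$ with $\OO(\sum_i r_iE_i)\cong\LL$ whose index set of nonnegative coordinates $I$ satisfies: $C_I$ has nontrivial reduced homology in a degree below ${\rm rk}(N)$. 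Now I would split into two cases. If $I=\{1,\dots,n\}$, then $C_I$ is the full fan $\Sigma$, whose geometric realization is $N_\RR$ (since $\Sigma$ is complete), so its reduced homology is concentrated in the top degree and contributes only to $\HH^0$ — this case never obstructs acyclicity. If $I$ is a proper subset, then I claim $C_I$ cannot have reduced homology in the top degree ${\rm rk}(N)$ either: $C_I$ is a proper subcomplex of the ${\rm rk}(N)$-dimensional sphere-or-ball-like complex $\Sigma$ obtained by deleting at least one maximal cone, so its top reduced homology vanishes. (More carefully: omitting a ray $i\notin I$ removes every maximal cone through $v_i$, and completeness of $\Sigma$ forces at least one such cone, so $C_I$ is not a pseudomanifold of dimension ${\rm rk}(N)$ without boundary; its top homology is zero.) Consequently, for proper $I$, "$C_I$ has nontrivial reduced homology" and "$C_I$ has nontrivial reduced homology below the top degree" are equivalent conditions. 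This matches exactly the condition on $I$ in the statement.

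It then remains to translate "there exists ${\bf r}$ with $\OO(\sum r_iE_i)\cong\LL$ and $\{i:r_i\geq 0\}=I$" into the asserted normal form. Writing $r_i=-1+(\text{nonneg})$ for $i\notin I$ and $r_i=0+(\text{nonneg})$ for $i\in I$, we see $\sum_i r_iE_i = -\sum_{i\notin I}E_i + \sum_{i\in I}r_iE_i - \sum_{i\notin I}(-r_i)E_i$ where for $i\notin I$ we have $r_i\leq -1$, so $-r_i-1\geq 0$; relabeling the nonnegative parts gives precisely $\OO(-\sum_{i\notin I}E_i+\sum_{i\in I}r_iE_i-\sum_{i\notin I}r_iE_i)$ with all the new $r_i\in\ZZ_{\geq0}$. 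Conversely any line bundle of this form arises from such an ${\bf r}$. Hence $\LL$ is non-acyclic iff it lies in one of the listed sets for some proper $I$ with $C_I$ having nontrivial reduced homology, which is the contrapositive of the Proposition. I expect the only genuinely delicate point to be the vanishing of top-degree reduced homology of $C_I$ for proper $I$ (the claim that deleting a ray kills the top homology because completeness guarantees a maximal cone through that ray); everything else is bookkeeping on top of Proposition \ref{hp}.
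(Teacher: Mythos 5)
Your proposal is correct and takes essentially the same route as the paper, which simply records Proposition \ref{iff} as an immediate corollary of Proposition \ref{hp}. You merely make explicit the bookkeeping the paper leaves implicit, namely that ${\rm Supp}({\bf r})=C_{I}$ for $I=\{i: r_i\geq 0\}$ and that for proper $I$ the top reduced homology of $C_I$ vanishes (so only $p\geq 1$ cohomology can arise), the latter point being the same observation as in the remark following Proposition \ref{hp}.
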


\begin{proof}
This is an immediate corollary of Proposition \ref{hp}.
\end{proof}

It is in principle rather difficult to apply the above criterion.
We can provide a more manageable 
sufficient condition of acyclicity as follows. 
Consider
${\rm Pic}_\RR(\PP_{\bf\Sigma})\colon= 
{\rm Pic}_{\ZZ}(\PP_{\bf\Sigma})\otimes \RR$. 
We can think of it as a quotient of $\RR^n$ with
basis elements given by $E_i$.
\begin{definition}\label{forbidden}
For each proper subset $I\subset \{1,\ldots,n\}$ 
such that $C_I$ has nontrivial reduced homology define
the \emph{forbidden point}
$$q_I=-\sum_{i\not\in I}E_i \in{\rm Pic}_\RR(\PP_{\bf\Sigma})$$
Define the \emph{forbidden cone} $F_I\subseteq {\rm Pic}_\RR(\PP_{\bf\Sigma})$
by 
$$
F_I=q_I+\sum_{i\in I} \RR_{\geq 0} E_i - \sum_{i\not\in I}
\RR_{\geq 0} E_i.
$$ 
A line bundle $\LL$ is called \emph{strongly acyclic}
if its image in ${\rm Pic}_\RR(\PP_{\bf\Sigma})$ does not
lie in any of the forbidden cones.
\end{definition}

\begin{proposition}\label{sacisac}
Every strongly acyclic line bundle is acyclic.
\end{proposition}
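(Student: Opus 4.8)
The plan is to show the contrapositive: if $\LL$ is \emph{not} acyclic, then its image in ${\rm Pic}_\RR(\PP_{\bf\Sigma})$ lies in some forbidden cone $F_I$. By Proposition \ref{iff}, failure of acyclicity means that $\LL$ can be written as $\OO(-\sum_{i\not\in I}E_i + \sum_{i\in I} r_i E_i - \sum_{i\not\in I} r_i E_i)$ for some proper subset $I$ with $C_I$ having nontrivial reduced homology and some nonnegative integers $r_i$. So I would fix such an expression and simply exhibit the membership in $F_I$ directly.

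The key observation is that the defining datum of $\LL$ in Proposition \ref{iff} is, term by term, exactly of the shape appearing in the definition of $F_I$: the point $q_I = -\sum_{i\not\in I}E_i$, plus $\sum_{i\in I} r_i E_i$ with $r_i \geq 0$ (an element of $\sum_{i\in I}\RR_{\geq 0}E_i$), minus $\sum_{i\not\in I} r_i E_i$ with $r_i \geq 0$ (an element of $\sum_{i\not\in I}\RR_{\geq 0}E_i$). Hence the class of $\LL$ in $\RR^n$ lands in $q_I + \sum_{i\in I}\RR_{\geq 0}E_i - \sum_{i\not\in I}\RR_{\geq 0}E_i = F_I$, and its image in ${\rm Pic}_\RR(\PP_{\bf\Sigma})$ lies in the image of $F_I$. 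Since a strongly acyclic line bundle is by Definition \ref{forbidden} precisely one whose image avoids every $F_I$, this proves the claim.

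The only subtlety — and the step I expect to need a word of care — is the passage from $\RR^n$ to the quotient ${\rm Pic}_\RR(\PP_{\bf\Sigma})$. The forbidden cone $F_I$ is defined as a subset of ${\rm Pic}_\RR(\PP_{\bf\Sigma})$, i.e. as the image of the cone in $\RR^n$ under the quotient map, while the integral data $(r_i)$ of Proposition \ref{iff} lives upstairs in $\ZZ^n$. So strictly I would phrase it as: choose a lift of $\LL$ to $\ZZ^n$ of the given form, observe it lies in the preimage of $F_I$ inside $\RR^n$, and conclude that the image of $\LL$ lies in $F_I$. There is no real obstacle here; the statement is essentially a reformulation of Proposition \ref{iff} in the language of cones in ${\rm Pic}_\RR(\PP_{\bf\Sigma})$, with the content already carried by Propositions \ref{hp} and \ref{iff}.
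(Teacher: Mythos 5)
Your argument is correct and is exactly the content behind the paper's one-line proof: Proposition \ref{iff} lists the non-acyclic bundles, and each of them visibly lies (after passing to the quotient ${\rm Pic}_\RR(\PP_{\bf\Sigma})$) in the corresponding forbidden cone $F_I$, so strong acyclicity forces acyclicity. Your explicit handling of the lift from $\ZZ^n$ to the quotient is just an unpacking of what the paper calls an immediate corollary.
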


\begin{proof}
This statement follows immediately from Proposition \ref{iff}.
\end{proof}

\begin{remark}
The concept of strong acyclicity has several advantages 
over  the usual acyclicity. For example, it can be checked for
by looking at a finite set of inequalities. It would be 
interesting to figure out the geometric meaning of
strong acyclicity and to see if this notion can be defined beyond
the toric case.
\end{remark}

\begin{remark}
An example of a line bundle which is acyclic but not
strongly acyclic is ${\mathcal O}(-6)$ on
the weighted projective line with weights $2$ and $3$.
Here the Picard group is isomorphic to $\ZZ$ with images
of ${\mathcal O}(E_1)$ and 
${\mathcal O}(E_2)$ given by ${\mathcal O}(2)$ and 
${\mathcal O}(3)$ respectively. It is impossible to
write ${\mathcal O}(-6)={\mathcal O}(r_1E_1+r_2E_2)$ 
with negative integer $r_i$, which means that 
${\mathcal O}(-6)$ is acyclic. On the other 
hand the forbidden cone $F_{\emptyset}$ contains
the images of all ${\mathcal O}(k)$ with $k\leq -5$, 
so ${\mathcal O}(-6)$ is not strongly acyclic.
\end{remark}

\section{The case of ${\rm rk(Pic)}\leq 2$}\label{sec5}
In this section we will argue that Conjecture \ref{fsec.conj}
is true for toric Fano Deligne-Mumford stacks $\PP_{\bf\Sigma}$
with ${\rm rk}({\rm Pic}(\PP_{\bf\Sigma}))\leq 2$.

We first consider the case of 
${\rm rk}({\rm Pic}(\PP_{\bf\Sigma}))=1$. In this case 
$\Delta$ is a simplex in the lattice $N$ of rank $(n-1)$.
The only forbidden cone occurs for $I=\emptyset$,
with the corresponding forbidden point $-\sum_{i=1}^nE_i$.
Denote by 
$$\deg\colon {\rm Pic}(\PP_{\bf\Sigma})\to \ZZ$$
the linear function that takes value $1$ on the positive
generator of ${\rm Pic}(\PP_{\bf\Sigma})$. Then 
the forbidden cone is given by $x\in {\rm Pic}_\RR(\PP_{\bf\Sigma})$
such that 
$$
\deg(x)\leq -\sum_{i=1}^n \deg(E_i)= \deg(K)
$$
where $K$ is the canonical divisor.

\begin{proposition}
Consider the set $S$ of line bundles $\LL$ with
$\deg\LL \in [\deg(K)+1,0]$. Then the set $S$ 
forms a full strong exceptional collection on $\PP_{\bf\Sigma}$.
\end{proposition}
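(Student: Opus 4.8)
The plan is to verify the two defining properties of a full strong exceptional collection from Definition \ref{fsec}: first that all higher $\mathrm{Ext}$ groups between members of $S$ vanish, and second that $S$ generates the derived category of $\PP_{\bf\Sigma}$. For the first property, recall that $\mathrm{Ext}^i_{\PP_{\bf\Sigma}}(\LL_1,\LL_2)=\HH^i(\PP_{\bf\Sigma},\LL_1^{-1}\otimes\LL_2)$, so I must show that $\LL_1^{-1}\otimes\LL_2$ is acyclic whenever $\LL_1,\LL_2\in S$. If $\deg\LL_1,\deg\LL_2\in[\deg(K)+1,0]$, then the difference $d=\deg(\LL_1^{-1}\otimes\LL_2)=\deg\LL_2-\deg\LL_1$ satisfies $d\in[\deg(K)+1,\,-\deg(K)-1]$, and in particular $d\geq \deg(K)+1 > \deg(K)$. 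By the description of the unique forbidden cone $F_\emptyset=\{x:\deg(x)\leq\deg(K)\}$ given just above the statement, this means the image of $\LL_1^{-1}\otimes\LL_2$ in $\mathrm{Pic}_\RR(\PP_{\bf\Sigma})$ does not lie in $F_\emptyset$, so by Proposition \ref{sacisac} it is strongly acyclic, hence acyclic. This disposes of the first property; I would remark that in rank one, strong acyclicity here is automatic and no subtle convex geometry is needed.

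The second property — generation of $D^b(\PP_{\bf\Sigma})$ by $S$ — is the more substantive part, and it is where the ``moving window'' idea from the introduction enters. By Theorem \ref{bh} the derived category is generated by all line bundles, so it suffices to show that every line bundle $\LL$ lies in the triangulated subcategory $\langle S\rangle$ generated by $S$. I would argue by induction on $\deg\LL$, in both directions away from the interval $[\deg(K)+1,0]$. For the downward induction, suppose $\deg\LL = m < \deg(K)+1$; I want to express $\LL$ using a Koszul-type complex built from the $n$ rays. Concretely, the tautological surjection $\bigoplus_i \OO(E_i) \leftarrow$-type Koszul resolution of a structure sheaf of an appropriate toric subvariety gives an exact complex whose terms are direct sums of $\OO(\sum_{i\in A}E_i)$ over subsets $A$; twisting by $\LL$ and using that all $\deg(E_i)$ are positive, every term other than $\LL$ itself has strictly larger degree, hence lies in $\langle S\rangle$ by induction, forcing $\LL\in\langle S\rangle$ as well. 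The upward induction (for $\deg\LL > 0$) is symmetric, using the dual Koszul complex (tensoring the resolution with $\OO(-\sum_i E_i)$) so that the other terms have strictly smaller degree.

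The main obstacle is making the Koszul/generation step precise: I need an honest exact sequence of line bundles on $\PP_{\bf\Sigma}$ whose non-$\LL$ terms are line bundles of strictly larger (resp. smaller) degree, and I must confirm that the relevant toric subscheme whose resolution I am invoking actually exists and is supported away from the deleted locus so that the Cox-quotient description applies. In the Picard rank one situation $\Delta$ is a simplex with $n$ vertices, $U = \CC^n\setminus\{0\}$, and one expects the Koszul complex on the $n$ coordinate functions (which cut out the origin, i.e.\ the complement of $U$ inside $\CC^n$) to furnish exactly the needed relation among $\OO, \OO(E_{i_1}), \OO(E_{i_1}+E_{i_2}),\ldots,\OO(\sum_i E_i)$; since $\deg(\sum_i E_i) = -\deg(K) = \sum_i\deg(E_i) > 0$, all intermediate twists of a downward-induction $\LL$ land in $\langle S\rangle$. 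I would also double-check the base case: the interval $[\deg(K)+1,0]$ has exactly $-\deg(K) = {\rm rk}(N)!\,\Vol(\Delta)$ integers, matching the expected size of the collection from Remark \ref{what}, which is a reassuring consistency check rather than a logical necessity. Once the exact Koszul sequence is written down, the induction is routine, so I regard the construction of that sequence (and the verification of its exactness via Proposition \ref{hp} applied to each term, or directly via the Cox ring) as the one genuinely load-bearing computation.
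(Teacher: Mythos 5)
Your proposal is correct and takes essentially the same route as the paper: the acyclicity of differences follows from the single forbidden cone $F_\emptyset$ via Proposition \ref{sacisac}, and generation follows by a two-sided induction on degree using the Koszul complex resolving the origin in $\CC^n = U \cup \{0\}$, exactly as in the paper's argument (which cites \cite{BH} for the exactness you flag as the one load-bearing point to verify). The only cosmetic difference is that you place $\LL$ at the left end of the Koszul complex for the downward induction, whereas the paper phrases the decreasing induction symmetrically to the increasing one; the content is the same.
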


\begin{proof}
It is clear that for any two $\LL_1$ and $\LL_2$ in $S$,
the line bundle $\LL_2\otimes \LL_1^{-1}$ has degree
bigger than $\deg(K)$ and is therefore acyclic by
Proposition \ref{sacisac}.

Consider the subcategory $D$ of the derived category 
of $\PP_{\bf\Sigma}$ which is generated by $\LL$ in $S$.
In view of Theorem \ref{bh}, it suffices to show that all
line bundles on $\PP_{\bf\Sigma}$ lie in $D$.

Let us first prove this for all line bundles of nonnegative degree
by induction on $\deg(\LL)$. The base of induction $\deg(\LL)=0$
is clear. Suppose now that we have shown this for all
$\LL$ of $0\leq \deg(\LL)\leq k$. Then if 
$\LL=\OO(E)$ has degree $(k+1)$, 
consider the Koszul complex
$$
0\to \OO(E-\sum_{i=1}^n E_i) \to \ldots \to \oplus_{i=1}^n \OO(E-E_i) \to \LL\to 0.
$$
This comes from a Koszul complex on $\CC^n$ which resolves
the point $(0,\ldots,0)\not\in U$. As a result, it leads to an exact
complex on $\PP_\Sigma$, see \cite{BH}.
All but the last term of the complex are in $D$, hence
so
is $\LL$, which proves the induction step.

A similar, decreasing, induction on the degree allows 
us to handle the case of $\deg(\LL)\leq \deg(K)$, which finishes
the proof.
\end{proof}

\begin{remark}\label{numpic1}
The number of elements of $S$ equals $(-\deg(K))d$ where 
$d$ is the order of the torsion subgroup of 
${\rm Pic}(\PP_{\bf\Sigma})$. This coincides with the 
rank of the Grothendieck group of $\PP_{\bf\Sigma}$, which
is not a coincidence, but rather is expected by Remark \ref{what}.
\end{remark}

\begin{remark}
The case of Picard number one has already been settled
in \cite{Kawamata}, but we have treated it here nonetheless,
to give a unified picture of our approach.
\end{remark}

We will now move to the more challenging case of 
${\rm rk}({\rm Pic}(\PP_{\bf\Sigma}))=2$. We have 
$n$ elements $v_i$ of the lattice $N$ of rank $n-2$,
which form the set of vertices of a simplicial polytope
$\Delta$.

\begin{proposition}\label{alpha}
There exists a unique up to scaling collection of
rational numbers $\alpha_i$, such that 
$\sum_{i=1}^n \alpha_i =0$ and $\sum_{i=1}^n \alpha_i v_i=0$.
Moreover, all of the $\alpha_i$ in this relation are nonzero.
\end{proposition}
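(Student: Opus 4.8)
The plan is to work in the short exact sequence $0 \to N^* \xrightarrow{\rho^*} \ZZ^n \to {\rm Gale}(N) \to 0$, tensored with $\QQ$. The existence and uniqueness of the $\alpha_i$ up to scaling is really a statement about a certain one-dimensional space. Consider the linear map $\phi\colon \QQ^n \to N_\QQ \oplus \QQ$ given by $(\alpha_1,\ldots,\alpha_n) \mapsto (\sum_i \alpha_i v_i, \sum_i \alpha_i)$. A collection $(\alpha_i)$ as in the statement is precisely a nonzero element of $\ker\phi$, so the first half of the claim amounts to $\dim_\QQ \ker\phi = 1$. Since the $v_i$ span $N_\QQ$ (the fan is complete), the map $(\alpha_i)\mapsto \sum_i \alpha_i v_i$ is surjective onto $N_\QQ$, hence has a kernel of dimension $n - {\rm rk}(N) = n-2$. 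The further condition $\sum_i \alpha_i = 0$ cuts this down by at most one; so $\dim\ker\phi$ is $n-2$ or $n-3$, i.e. $1$ or $0$ since ${\rm rk}(N)=n-2$ gives $n-2-1=1$ here precisely because ${\rm rk}({\rm Pic})=2$ means $n-{\rm rk}(N)=2$. To rule out $\dim\ker\phi=0$ one must check that the functional $\sum_i \alpha_i$ does not vanish identically on the $(n-2)$-dimensional space $\{(\alpha_i): \sum_i \alpha_i v_i = 0\}$; equivalently, that $1 \notin \ZZ^n$ (the all-ones vector) does not lie in the column span of the matrix with rows $v_i$ — i.e. that there is no $w\in N^*_\QQ$ with $w\cdot v_i = 1$ for all $i$. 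This last statement holds because $\Delta = {\rm ConvexHull}(v_i)$ is a polytope with $0$ in its interior for a Fano stack — actually, more robustly: if all $v_i$ lay on a common affine hyperplane $\{w\cdot v = 1\}$, then $0$ would not be in the affine span's interior and $\Sigma$ could not be a complete fan. So $\dim\ker\phi = 1$, giving existence and uniqueness up to scaling.

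For the second assertion — that every $\alpha_i \neq 0$ — I would argue by contradiction. Suppose $\alpha_k = 0$ for some $k$. Then $(\alpha_i)_{i\neq k}$ satisfies $\sum_{i\neq k}\alpha_i v_i = 0$ and $\sum_{i\neq k}\alpha_i = 0$, so the $n-1$ vectors $\{v_i\}_{i\neq k}$ satisfy a linear relation of this special type. But $\{v_i\}_{i\neq k}$ still spans $N_\QQ$ (since $\Delta$ is simplicial of dimension $n-2$ with $n$ vertices, deleting one vertex $v_k$ leaves $n-1$ vertices of a facet-rich polytope that still affinely spans $N_\RR$ — more directly, any $n-1$ of the $v_i$ still span because any $v_k$ lies in the relative interior of the cone over the opposite facet). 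Hence the kernel of $(\alpha_i)_{i\neq k}\mapsto \sum_{i\neq k}\alpha_i v_i$ in $\QQ^{n-1}$ has dimension $(n-1)-(n-2) = 1$, and intersecting with $\sum \alpha_i = 0$ would force it to be $0$ unless that single relation already has vanishing coordinate sum. So I need: the unique-up-to-scaling relation among $\{v_i\}_{i\neq k}$ does \emph{not} have all its coefficients summing to zero. This is where the Fano hypothesis enters decisively: since $\{v_i\}_{i\neq k}$ are the vertices of the facet of $\Delta$ not containing... — more carefully, since $\Sigma$ restricted away from the ray $v_k$ is still "mostly complete," the relation among the remaining $v_i$ is a \emph{circuit} relation whose coefficients, after rescaling, can be taken to have a definite sign pattern coming from a linear dependence of $n-1$ vectors of rank $n-2$; the Fano/convexity condition then prevents the coordinate sum from vanishing because the $v_i$ are genuine vertices of a polytope (not lying on a hyperplane through the origin).

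The main obstacle I anticipate is the second part. The clean way to organize it: because $\Delta$ is a \emph{simplicial} polytope with $n$ vertices in dimension $n-2$, the Gale dual configuration (the images of $E_i$ in ${\rm Pic}_\QQ \cong \QQ^2$) consists of $n$ nonzero vectors in the plane, and the relation coefficients $\alpha_i$ (with the sum condition) correspond to an affine functional on that planar Gale diagram. The statement $\alpha_i \neq 0$ for all $i$ translates into: no point of the Gale diagram lies on a certain line through the origin determined by the "sum" functional. Here I would invoke that for a Fano (hence in particular any complete) simplicial fan of Picard rank two, the planar Gale diagram has no vector on a coordinate-type line forced by completeness — concretely, the set of $v_i$ spanning each maximal cone corresponds to a "flag" decomposition of the circle of the Gale diagram into arcs, and every ray must participate, forcing $\alpha_i\neq 0$. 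I would spell this out by first choosing an explicit affine functional realizing $(\alpha_i)$ and then checking that a zero coefficient would let one delete that ray while keeping a complete fan on $n-1$ rays of Picard rank two — but a complete fan of Picard rank two on $n-1$ rays in dimension $n-2$ would force $n-1 = (n-2)+2$, i.e. it is consistent, so one needs the sharper convexity input rather than a pure dimension count. That sharper input — that the $v_i$ are vertices of a convex polytope and not merely lattice points on its boundary — is exactly the Fano hypothesis, and pinning down its use precisely is the crux.
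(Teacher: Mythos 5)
Your first half is essentially the paper's argument and is fine in substance: the space of linear relations among the $v_i$ is $2$-dimensional, and the functional $(\alpha_i)\mapsto\sum_i\alpha_i$ does not vanish identically on it, which you check by noting that a $w\in N^*\otimes\QQ$ with $w\cdot v_i=1$ for all $i$ would contradict completeness (the paper instead produces a relation with positive coefficient sum from $0$ lying in the convex hull of the $v_i$ --- the same fact in dual form). But tidy the bookkeeping: the kernel of $(\alpha_i)\mapsto\sum_i\alpha_i v_i$ has dimension $n-{\rm rk}(N)=2$, not $n-2$, and the case you must exclude is $\dim\ker\phi=2$ (failure of uniqueness), not $\dim\ker\phi=0$; dimension $0$ cannot occur, since one linear condition cuts a $2$-dimensional space down to dimension at least $1$. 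Your non-vanishing check is exactly what excludes dimension $2$, so the argument is correct once restated.

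The second half has a genuine gap. You reduce correctly to the claim that the (unique up to scale) relation among the $n-1$ vectors $\{v_i\}_{i\neq k}$ cannot have coefficient sum zero --- equivalently, that no $n-1$ of the points $v_i$ are affinely dependent --- but you never prove this. The appeal to ``circuit relations with a definite sign pattern'' and to the $v_i$ not lying on a hyperplane \emph{through the origin} does not do it: a mixed-sign circuit can perfectly well have zero coefficient sum, and the relevant hyperplane is affine, not linear; you yourself concede that pinning down the use of the Fano hypothesis ``is the crux.'' The missing step is short and is exactly how the paper concludes: a nonzero relation $\sum_{i\neq k}\alpha_i v_i=0$ with $\sum_{i\neq k}\alpha_i=0$ says the points $v_i$, $i\neq k$, are affinely dependent, hence lie in a common affine hyperplane $H$ of $N_\RR$; since $\Delta$ is full-dimensional, $v_k\notin H$, so $\Delta$ lies on one closed side of $H$ and $H$ is a supporting hyperplane meeting $\Delta$ in a proper face containing $n-1$ of its vertices. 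But $\Delta$ is a \emph{simplicial} polytope of dimension $n-2$, so every proper face has at most $n-2$ vertices --- contradiction. Thus the decisive input is simpliciality of $\Delta$ (part of the Fano definition), not merely that the $v_i$ are genuine vertices, and your proposal stops just short of this point.
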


\begin{proof}
Since $\Sigma$ is a complete fan,
the vertices $v_i$ generate $N\otimes \QQ$, so
the space of linear relations on $v_i$ is of dimension two.
Since $0$ is in the convex hull of $v_i$, it can be
written as a sum of $v_i$ with nonnegative coefficients.
Hence, there is a relation on $v_i$ with $\sum_{i=1}^n\alpha_i>0$.
Consequently, the condition 
$\sum_{i=1}^n \alpha_i =0$ cuts out a dimension one subspace
of relations.

Suppose some $\alpha_i$ is zero. It means that $v_j,j\neq i$
lie in a proper affine subspace of $N\otimes \QQ$.
It then gives a supporting hyperplane of $\Delta$ which 
has $(n-1)$ points in it, in contradiction with simpliciality
of $\Delta$.
\end{proof}

We will pick one such relation
$\sum_{i=1}^n\alpha_iv_i=0$. We will denote by $I_+$ 
(resp. $I_-$) the sets of $i$ with positive $\alpha_i$ (resp. 
negative $\alpha_i$).
\begin{proposition}\label{pic2fan}
The facets of $\Delta$ are precisely convex hulls of $(n-2)$ of 
the $v_i$-s, such that one of the remaining two indices 
lies in $I_+$ and the other lies in $I_-$. 
\end{proposition}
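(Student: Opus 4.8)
The plan is to characterize the facets of the simplicial polytope $\Delta = {\rm ConvexHull}(v_1,\ldots,v_n)$ by relating supporting hyperplanes to the sign pattern of the chosen relation $\sum_i \alpha_i v_i = 0$ with $\sum_i \alpha_i = 0$. Since $\Delta$ is a simplicial polytope whose vertices are the $v_i$ and which lives in an $(n-2)$-dimensional affine space, each facet is the convex hull of exactly $n-2$ of the vertices, hence is determined by the two-element complement $\{j,k\}$. So the content of the proposition is to show: $\{v_i : i \neq j,k\}$ spans a facet if and only if $j$ and $k$ lie in opposite sets $I_+$, $I_-$.

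First I would fix two indices $j \neq k$ and consider the affine hyperplane $H$ spanned by $\{v_i : i \neq j, k\}$; by Proposition \ref{alpha} (all $\alpha_i$ nonzero, so in particular removing one vertex leaves a full-dimensional configuration, and the space of relations is two-dimensional) these $n-2$ points are affinely independent, so $H$ is genuinely a hyperplane. Let $\ell$ be an affine-linear functional vanishing on $H$. Then $\ell(v_i) = 0$ for all $i \neq j,k$, so applying $\ell$ to the relation $\sum_i \alpha_i v_i = 0$ together with $\sum_i \alpha_i = 0$ — more precisely, writing $\ell(x) = \langle m, x\rangle + c$ and using both $\sum \alpha_i v_i = 0$ and $\sum \alpha_i = 0$ — yields $\alpha_j \ell(v_j) + \alpha_k \ell(v_k) = 0$. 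Since $\alpha_j, \alpha_k \neq 0$, this says $\ell(v_j)$ and $\ell(v_k)$ are either both zero or nonzero with opposite signs (when $\alpha_j, \alpha_k$ have the same sign) or the same sign (when $\alpha_j,\alpha_k$ have opposite signs). The case $\ell(v_j) = \ell(v_k) = 0$ would put all $n$ vertices in a hyperplane, impossible since they span $N_\RR$; so both are nonzero.

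The key step is then: $H$ supports a facet if and only if $\ell(v_j)$ and $\ell(v_k)$ have the same sign, i.e. lie on the same side of $H$. This is just the definition of a facet (a hyperplane through $n-2$ affinely independent vertices is a supporting hyperplane, hence $\{v_i : i\neq j,k\}$ is a facet, precisely when the remaining two vertices are not strictly separated by $H$), combined with the observation that neither $v_j$ nor $v_k$ can lie on $H$. By the sign computation above, $\ell(v_j)$ and $\ell(v_k)$ have the same sign exactly when $\alpha_j$ and $\alpha_k$ have opposite signs, i.e. exactly when one of $j,k$ is in $I_+$ and the other in $I_-$. This gives the claimed equivalence. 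Conversely, every facet arises as the convex hull of some such $n-2$ vertices since $\Delta$ is simplicial of the stated dimension, so the characterization is complete.

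The main obstacle — though it is minor — is bookkeeping the affine-versus-linear distinction cleanly: the relation $\sum \alpha_i v_i = 0$ is a linear relation in $N_\RR$, but facets are governed by affine functionals, so one must use the auxiliary relation $\sum \alpha_i = 0$ to kill the constant term and land on the clean identity $\alpha_j \ell(v_j) = -\alpha_k \ell(v_k)$. One should also double-check the edge case where the $n-2$ points fail to be affinely independent; but Proposition \ref{alpha}, which guarantees all $\alpha_i \neq 0$ and that relations form a two-dimensional space, rules this out, since an affine dependence among $n-2$ of the $v_i$ would, together with the two independent linear relations, force more relations than available. With those points handled, the argument is a short exercise in convex geometry.
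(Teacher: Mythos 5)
Your argument is correct and follows essentially the same route as the paper: both characterize whether $\mathrm{conv}(\{v_i : i\neq j,k\})$ is a facet by asking whether $v_j$ and $v_k$ lie on the same side of its affine span, and both translate this into a sign condition on $\alpha_j,\alpha_k$ using the relation from Proposition \ref{alpha}; you phrase it via a supporting affine functional $\ell$ while the paper phrases the equivalent condition via a barycentric relation $\sum_{i\in I}\beta_iv_i=\sum_{j\notin I}\beta_jv_j$. One minor tightening: the cleanest way to see that the $n-2$ remaining $v_i$ are affinely independent is to note that a nontrivial affine dependence among them, extended by zeros, would be a linear relation with coefficient sum zero, hence (by uniqueness in Proposition \ref{alpha}) proportional to $\alpha$, contradicting $\alpha_j\neq 0$.
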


\begin{proof}
Consider a subset $I\subset\{1,\ldots,n\}$ of 
cardinality $(n-2)$. The convex hull of $v_i, i\in I$  
does \emph{not} form a face of $\Delta$ if and only if the segment through remaining two vertices intersects the affine span of 
this set. This is equivalent to the existence of  a relation
$$
 \sum_{i\in I}\beta_i v_i= \sum_{j\not\in I}\beta_j v_j
$$
with $\sum_{i\in I}\beta_i =1=\sum_{j\not\in I}\beta_j$
and with the two $\beta_j$ both positive. By comparing 
with the result of Proposition \ref{alpha}, this implies
that the complement of $I$ is a subset of $I_+$ or of $I_-$.
Conversely, for any two indices $j_1,j_2$ in $I_+$ or $I_-$, 
one can move $\alpha_{j_1}v_{j_1}+\alpha_{j_2}v_{j_2}$ to
the right hand side in the equation $\sum_{i=1}^n \alpha_iv_i=0$
and then divide by $-(\alpha_{j_1}+\alpha_{j_2})$ to get
$$\sum_{i\neq j_1,j_2}\beta_iv_i =
\beta_{j_1}v_{j_1}+\beta_{j_2}v_{j_2}$$ 
with $\sum_{i\neq j_1,j_2}\beta_i = 1=\beta_{j_1}+\beta_{j_2}$
and positive $\beta_{j_1}$ and $\beta_{j_2}$.
\end{proof}

\begin{corollary}\label{cor}
A subset $I$ of $\{1,\ldots,n\}$ corresponds to a face of 
$\Delta$ if and only if the complement of $I$ is not 
contained in $I_+$ or $I_-$. In addition, the sets
of $I_+$ and $I_-$ have at least two elements each.
\end{corollary}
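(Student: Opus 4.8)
The plan is to deduce Corollary \ref{cor} directly from Proposition \ref{pic2fan} by reformulating the combinatorial condition and then handling the final sentence about the cardinalities of $I_+$ and $I_-$ separately.

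First I would note that a subset $I\subseteq\{1,\ldots,n\}$ corresponds to a face of $\Delta$ if and only if it is contained in some facet of $\Delta$, i.e.\ in the set of indices of one of the maximal faces. By Proposition \ref{pic2fan}, the facets are exactly the complements of the two-element sets $\{j_1,j_2\}$ with $j_1\in I_+$ and $j_2\in I_-$. Hence $I$ is a face if and only if there exist $j_1\in I_+$, $j_2\in I_-$ with $I\subseteq\{1,\ldots,n\}\setminus\{j_1,j_2\}$, equivalently with $\{j_1,j_2\}\subseteq\{1,\ldots,n\}\setminus I$. So $I$ is a face precisely when the complement of $I$ contains at least one index in $I_+$ and at least one index in $I_-$. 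I would then observe that this last condition is the negation of ``the complement of $I$ is contained in $I_+$ or contained in $I_-$'': indeed, a subset of $\{1,\ldots,n\}=I_+\sqcup I_-$ fails to meet $I_+$ iff it is contained in $I_-$, and fails to meet $I_-$ iff it is contained in $I_+$. This gives the first assertion of the corollary.

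For the second assertion, that $|I_+|\geq 2$ and $|I_-|\geq 2$, I would argue by contradiction. Suppose $|I_-|\leq 1$. If $I_-=\emptyset$ then the relation $\sum_i\alpha_i v_i=0$ has all $\alpha_i\geq 0$ with $\sum_i\alpha_i=0$, forcing all $\alpha_i=0$, contradicting Proposition \ref{alpha}, which asserts all $\alpha_i$ are nonzero. If $|I_-|=1$, say $I_-=\{i_0\}$, then $v_{i_0}$ lies in the convex hull of the remaining $v_i$ (after dividing the relation appropriately), so $v_{i_0}$ is not a vertex of $\Delta$, contradicting the standing hypothesis that the $v_i$ are the vertices of the simplicial polytope $\Delta$ (Definition \ref{nefFano}, together with the setup preceding Proposition \ref{alpha}). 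The same argument with signs reversed handles $|I_+|\leq 1$.

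I do not anticipate a serious obstacle here; the corollary is essentially a restatement of Proposition \ref{pic2fan} together with a bookkeeping observation about complements. The only point requiring a little care is making sure the equivalence ``face $\iff$ contained in a facet'' is used correctly, and that the empty complement case (i.e.\ $I=\{1,\ldots,n\}$, which is not a proper subset and does not correspond to a face of $\Delta$) is consistent with the stated criterion: when $I=\{1,\ldots,n\}$ the complement is empty, which is contained in both $I_+$ and $I_-$, so the criterion correctly reports that $I$ is not a face.
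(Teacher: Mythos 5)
Your proof is correct and follows essentially the same route as the paper: the first statement is read off directly from the facet description in Proposition \ref{pic2fan}, and the claim that $I_+$ and $I_-$ each have at least two elements is obtained by contradiction with the fact that every $v_i$ is a vertex of $\Delta$. The only immaterial difference is in that last step, where the paper notes that a lone element of $I_+$ or $I_-$ would give a $v_i$ lying in no face of $\Delta$, while you use the relation from Proposition \ref{alpha} to exhibit that $v_i$ as a convex combination of the remaining vertices; both contradict the vertex property in the same way.
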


\begin{proof}
The first statement follows immediately from Proposition
\ref{pic2fan}. If $I_+$ or $I_-$ has only one element,
then the corresponding $v_i$ 
does not lie in any face of $\Delta$.
\end{proof}

The following proposition classifies the forbidden cones
in this case. 
\begin{proposition}\label{forbpic2}
There are precisely three forbidden cones,
which correspond to the subsets 
$\emptyset$, $I_+$ and $I_-$ of $\{1,\ldots,n\}$.
\end{proposition}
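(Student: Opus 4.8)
The plan is to unwind Definition~\ref{forbidden}. A forbidden cone is indexed by a proper subset $I\subset\{1,\ldots,n\}$ for which the simplicial complex $C_I$ has nontrivial reduced homology, where $C_I$ consists of those $J\subseteq I$ such that the $v_j$, $j\in J$, lie in a common cone of $\Sigma$. Since $\PP_{\bf\Sigma}$ is Fano, $\Sigma$ is the fan over the proper faces of $\Delta$, so this is the same as saying that $J$ corresponds to a face of $\Delta$; by Corollary~\ref{cor}, together with the fact (Proposition~\ref{alpha}) that $\{1,\ldots,n\}=I_+\sqcup I_-$ with both parts nonempty, this holds for $J$ exactly when $\{1,\ldots,n\}\setminus J$ meets both $I_+$ and $I_-$. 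Hence
$$
C_I=\{\,J\subseteq I:\ (\{1,\ldots,n\}\setminus J)\cap I_+\neq\emptyset \ \text{ and }\ (\{1,\ldots,n\}\setminus J)\cap I_-\neq\emptyset\,\},
$$
and the statement reduces to a finite combinatorial computation of reduced homology of these complexes.

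First I would check that $\emptyset$, $I_+$, $I_-$ do yield forbidden cones, and are pairwise distinct (clear, since $I_+\cap I_-=\emptyset$ and $|I_+|,|I_-|\geq 2$ by Corollary~\ref{cor}). For $I=\emptyset$ we get $C_\emptyset=\{\emptyset\}$, whose reduced $(-1)$st homology is $\CC$. For $I=I_+$, which is a proper subset because $I_-\neq\emptyset$, the complement of any $J\subseteq I_+$ already contains $I_-$, so the only remaining constraint is that it meet $I_+$, i.e. $J\neq I_+$; thus $C_{I_+}$ is the boundary complex of the simplex on $I_+$, homeomorphic to $S^{|I_+|-2}$, which has nontrivial reduced homology since $|I_+|\geq 2$. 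The case $I=I_-$ is symmetric.

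Next I would show that no other proper subset $I$ is forbidden, by proving $C_I$ is contractible whenever $I\notin\{\emptyset,I_+,I_-\}$. Here I will use the standard fact that a nonempty simplicial complex admitting a vertex $c$ with $\tau\cup\{c\}$ a face whenever $\tau$ is (a cone with apex $c$) is contractible. If $I\not\supseteq I_+$ and $I\not\supseteq I_-$, then $\{1,\ldots,n\}\setminus I$ contains an element of $I_+$ and an element of $I_-$, hence so does $\{1,\ldots,n\}\setminus J$ for every $J\subseteq I$; therefore $C_I$ is the full simplex on $I$, a cone with apex any vertex of $I$ (note $I\neq\emptyset$). If instead $I\supseteq I_+$, then $I$ being proper forces $I=I_+\sqcup K$ with $\emptyset\neq K\subsetneq I_-$; now $\{1,\ldots,n\}\setminus J\supseteq I_-\setminus K\neq\emptyset$ for all $J\subseteq I$, so $C_I=\{J\subseteq I: I_+\not\subseteq J\}$, which is a cone with apex any $c\in K$, since $c\notin I_+$. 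The case $I\supseteq I_-$ is symmetric, while $I\supseteq I_+\cup I_-=\{1,\ldots,n\}$ is excluded by properness. In every case $\widetilde H_\ast(C_I)=0$, so $I$ does not give a forbidden cone.

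The one point demanding care — and the only thing that is not a routine unwinding — is the degenerate complex $C_\emptyset=\{\emptyset\}$: the ``full simplex'' argument gives contractibility only for $I\neq\emptyset$, and $\{\emptyset\}$ is precisely the exceptional induced subcomplex whose reduced homology does not vanish, which is exactly why $\emptyset$ belongs on the list alongside $I_+$ and $I_-$. Combining the two halves yields exactly three forbidden cones, indexed by $\emptyset$, $I_+$ and $I_-$.
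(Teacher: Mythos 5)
Your proof is correct and follows essentially the same route as the paper: reduce to the face criterion of Corollary \ref{cor}, show every other $C_I$ is a cone (hence acyclic), and identify $C_{I_\pm}$ with the boundary of a simplex, i.e.\ a sphere. Your case split by whether $I$ contains $I_+$ or $I_-$ is just a repackaging of the paper's condition that $I$ and $\bar I$ both meet $I_+$ (or $I_-$), though you are more explicit than the paper about the degenerate case $I=\emptyset$ with $C_\emptyset=\{\emptyset\}$.
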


\begin{proof}
Suppose that both $I$ and its complement $\bar I$ 
intersect $I_+$ nontrivially. Pick $i\in I\cap I_+$.
By Corollary \ref{cor} the simplicial complex $C_I$
is a cone over $i$ (i.e. $i$ can be added to any of its subsets) 
and is thus acyclic. Similarly,
if $I$ and $\bar I$ intersect $I_-$ nontrivially,
then $C_I$ is acyclic. 

It remains to observe that for $I$ that are equal to
$I_\pm$ the corresponding simplicial complex $C_I$
has a geometric realization of the sphere and 
consequently has nontrivial reduced homology.
\end{proof}

For what follows we pick and fix a collection of positive numbers $r_i, i=1,\ldots, n$, 
such that $\sum_i r_i=1$ and $\sum_i r_iv_i=0$. This collection
gives a linear function $f$ on
${\rm Pic}_\RR(\PP_{\bf\Sigma})$ with $f(E_i)=r_i$.
Similarly, we define a linear function $\alpha$ 
with $\alpha(E_i)=\alpha_i$
from Proposition \ref{alpha}.
Consider the parallelogram $P$ in 
${\rm Pic}_\RR(\PP_{\bf\Sigma})$ which is given
by the inequalities 
$$
\vert f(x) \vert \leq \frac 12,\hskip .25in
\vert \alpha(x)\vert \leq \frac 12\sum_{i\in I_+} \alpha_i.
$$

\begin{proposition}\label{interior}
The interior of the parallelogram $2P$ contains 
no points from the forbidden cones. The only 
points on the boundary of $2P$ that lie in
the forbidden cones are 
$-\sum_{i=1}^n E_i$, $-\sum_{i\in I_-} E_i$ 
and $-\sum_{i\in I_+}E_i$, see Figure \ref{FigPic2-1}.
\end{proposition}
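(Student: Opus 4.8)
The plan is to use $f$ and $\alpha$ as linear coordinates on the two-dimensional space ${\rm Pic}_\RR(\PP_{\bf\Sigma})$. Since $f(E_i)=r_i>0$ for every $i$ while $\alpha$ takes values of both signs on the $E_i$, the functionals $f$ and $\alpha$ are linearly independent, so the set $2P=\{x:|f(x)|\leq 1,\ |\alpha(x)|\leq A\}$, where $A:=\sum_{i\in I_+}\alpha_i=-\sum_{i\in I_-}\alpha_i>0$, is a genuine bounded parallelogram whose interior is $\{x:|f(x)|<1,\ |\alpha(x)|<A\}$. Because all $\alpha_i$ are nonzero by Proposition \ref{alpha}, we have $\{1,\dots,n\}=I_+\sqcup I_-$, so the three forbidden cones of Proposition \ref{forbpic2} take the explicit forms
\[
F_\emptyset=-\sum_{i=1}^n E_i-\sum_{i=1}^n\RR_{\geq 0}E_i,\qquad
F_{I_+}=-\sum_{i\in I_-}E_i+\sum_{i\in I_+}\RR_{\geq 0}E_i-\sum_{i\in I_-}\RR_{\geq 0}E_i,
\]
and $F_{I_-}$ is obtained from $F_{I_+}$ by interchanging $I_+$ and $I_-$.

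Next I would record three elementary estimates, one per cone, obtained by evaluating $f$ or $\alpha$ on the generators. On $F_\emptyset$, writing a point as $-\sum_i E_i-\sum_i t_iE_i$ with $t_i\geq 0$, one gets $f=-1-\sum_i r_it_i\leq -1$, with equality exactly when all $t_i=0$, i.e. at $-\sum_i E_i$. On $F_{I_+}$, a point has the form $-\sum_{i\in I_-}E_i+\sum_{i\in I_+}s_iE_i-\sum_{i\in I_-}t_iE_i$ with $s_i,t_i\geq 0$; since $\alpha_i>0$ for $i\in I_+$ and $\alpha_i<0$ for $i\in I_-$, both sums contribute nonnegatively to $\alpha$, so $\alpha\geq\alpha\bigl(-\sum_{i\in I_-}E_i\bigr)=A$, with equality exactly when all $s_i=t_i=0$, i.e. at $-\sum_{i\in I_-}E_i$. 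Interchanging $I_+$ and $I_-$ gives $\alpha\leq -A$ on $F_{I_-}$, with equality only at $-\sum_{i\in I_+}E_i$.

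Combining these with the defining inequalities of $2P$ then finishes both assertions. Any $x$ in the interior of $2P$ satisfies $f(x)>-1$ and $-A<\alpha(x)<A$, so by the estimates it lies in none of $F_\emptyset$, $F_{I_+}$, $F_{I_-}$. If instead $x$ lies on the boundary of $2P$ and in one of these cones, the relevant estimate forces the corresponding defining inequality of $2P$ to be an equality, which pins $x$ to the apex of that cone, namely $-\sum_i E_i$, $-\sum_{i\in I_-}E_i$, or $-\sum_{i\in I_+}E_i$. Finally I would check that these three points genuinely lie on $\partial(2P)$: the first has $f=-1$ and $\alpha=0$; for the other two, Corollary \ref{cor} says $I_+$ and $I_-$ are nonempty proper subsets, so $|f|$ is a sum of some but not all of the positive numbers $r_i$, hence lies strictly between $0$ and $\sum_i r_i=1$, while $|\alpha|=A$. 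I expect no real obstacle here — the argument is essentially sign bookkeeping — and the only point demanding attention is keeping the inequality directions consistent between $F_{I_+}$ and $F_{I_-}$ and using $\{1,\dots,n\}=I_+\sqcup I_-$ to identify the cone apexes with the three points named in the statement.
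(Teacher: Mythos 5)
Your argument is correct and is essentially the same as the paper's: you evaluate the supporting linear functional ($f$ for $F_\emptyset$, $\alpha$ for $F_{I_\pm}$) on a general point of each forbidden cone, observe that the inequality is saturated only at the apex, and compare with the defining inequalities of $2P$. The paper spells this out only for $F_{I_+}$ and says the other two cones are handled similarly; you have written out all three cases and also explicitly verified that the three apex points do lie on $\partial(2P)$, a small check the paper leaves implicit.
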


\begin{figure}[tbh] 
\begin{center}
\includegraphics[scale = .7]{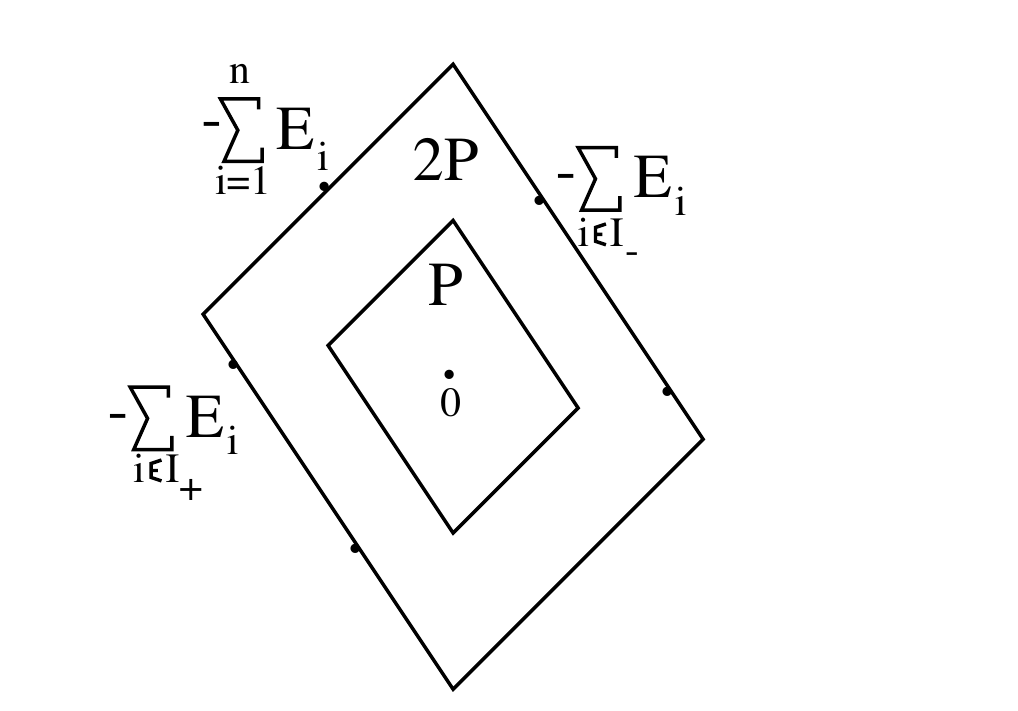}
\end{center}
\caption{\label{FigPic2-1}}
\end{figure}

\begin{proof}
There are three forbidden cones, described in
Proposition \ref{forbpic2}. We will show that for each
of these cones the corresponding forbidden point lies on the 
side of $2P$ with the side giving a supporting hyperplane 
of the cone. For $x$ in the cone
$$
-\sum_{i\in I_-} E_i +\sum_{i\in I_+} \RR_{\geq 0} E_i
-\sum_{j\in I_-} \RR_{\geq 0} E_j, 
$$
we have 
$$\alpha(x) = -\sum_{i\in I_-} \alpha_i 
+ \sum_{i\in I_+} t_i\alpha_i - \sum_{j\in I_-}t_j\alpha_j
\geq 
 -\sum_{i\in I_-} \alpha_i 
 =\sum_{i\in I_+}\alpha_i,
$$
with the equality if and only if all $t_i$ and $t_j$ are zero.
The other two cones are handled similarly.
\end{proof}

\begin{proposition}
Consider the four points 
$$
\pm\frac 12 \sum_{i\in I_+} E_i,
\pm\frac 12 \sum_{i\in I_-} E_i.
$$
They lie on two opposite sides of $P$.
Moreover, each of the opposite sides of $P$ can be 
subdivided into a pair of segments with these points 
as centers, as in Figure \ref{FigPic2-2}.
\end{proposition}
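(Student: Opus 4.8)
The plan is to pass to explicit coordinates on ${\rm Pic}_\RR(\PP_{\bf\Sigma})$ given by the two linear functions $f$ and $\alpha$; once this is set up, the whole statement reduces to elementary bookkeeping. First I would observe that $f$ and $\alpha$ are linearly independent, so that $x\mapsto(f(x),\alpha(x))$ is a linear isomorphism ${\rm Pic}_\RR(\PP_{\bf\Sigma})\to\RR^2$: the dual of ${\rm Pic}_\RR(\PP_{\bf\Sigma})$ is the space of linear relations $\{(y_i)_{i=1}^n:\sum_i y_iv_i=0\}$, which is two-dimensional by Proposition~\ref{alpha}, and the relation $(\alpha_i)$ has $\sum_i\alpha_i=0$ while $(r_i)$ has $\sum_i r_i=1\ne0$, so the two are independent. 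Writing $A:=\sum_{i\in I_+}\alpha_i$, which is positive since $I_+\ne\emptyset$ and each $\alpha_i>0$ on $I_+$, the parallelogram $P$ becomes in these coordinates the axis-parallel rectangle $[-\tfrac12,\tfrac12]\times[-\tfrac A2,\tfrac A2]$, whose sides $\{\alpha=\tfrac A2\}$ and $\{\alpha=-\tfrac A2\}$ are opposite and are each parametrized by the coordinate $f$ running over $[-\tfrac12,\tfrac12]$.

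Next I would evaluate $f$ and $\alpha$ at the four points. Since all $\alpha_i$ are nonzero (Proposition~\ref{alpha}), $I_+$ and $I_-$ partition $\{1,\ldots,n\}$; setting $R_\pm:=\sum_{i\in I_\pm}r_i$ we get $R_++R_-=1$ from $\sum_i r_i=1$, and $\sum_{i\in I_+}\alpha_i=A=-\sum_{i\in I_-}\alpha_i$ from $\sum_i\alpha_i=0$. Hence, in the coordinates above,
\[
\pm\tfrac12\sum\nolimits_{i\in I_+}E_i\longmapsto\bigl(\pm\tfrac{R_+}{2},\pm\tfrac A2\bigr),\qquad
\pm\tfrac12\sum\nolimits_{i\in I_-}E_i\longmapsto\bigl(\pm\tfrac{R_-}{2},\mp\tfrac A2\bigr).
\]
By Corollary~\ref{cor} both $I_+$ and $I_-$ are nonempty, so $0<R_\pm<1$; thus the second coordinates above are exactly $\pm\tfrac A2$ while the first lie strictly between $-\tfrac12$ and $\tfrac12$. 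Therefore $\tfrac12\sum_{i\in I_+}E_i$ and $-\tfrac12\sum_{i\in I_-}E_i$ lie in the relative interior of the side $\{\alpha=\tfrac A2\}$, and $-\tfrac12\sum_{i\in I_+}E_i$ and $\tfrac12\sum_{i\in I_-}E_i$ lie in the relative interior of the opposite side $\{\alpha=-\tfrac A2\}$, which is the first assertion.

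Finally, for the subdivision it suffices to treat the side $\{\alpha=\tfrac A2\}$, since the antipodal map $x\mapsto-x$ preserves $P$ and carries this side, together with its two marked points, onto the side $\{\alpha=-\tfrac A2\}$ with its two marked points. On $\{\alpha=\tfrac A2\}$, parametrized by $f\in[-\tfrac12,\tfrac12]$, the two points sit at $f=-\tfrac{R_-}{2}$ and $f=\tfrac{R_+}{2}$; I would split the side at the point $f=R_+-\tfrac12$, which lies in $(-\tfrac12,\tfrac12)$ because $R_+\in(0,1)$. The two resulting segments $[-\tfrac12,\,R_+-\tfrac12]$ and $[R_+-\tfrac12,\,\tfrac12]$ are then nondegenerate, and their midpoints (computed in the $f$-coordinate, hence as points of ${\rm Pic}_\RR(\PP_{\bf\Sigma})$, since the coordinate change is affine) are $\tfrac{R_+-1}{2}=-\tfrac{R_-}{2}$ and $\tfrac{R_+}{2}$ — exactly the two given points. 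I do not anticipate a genuine obstacle here: the content is just the coordinate computation above, and the only point needing a little care is the strict inequalities $0<R_\pm<1$ that guarantee the two segments are nondegenerate, which is precisely where Corollary~\ref{cor} enters.
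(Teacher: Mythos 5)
Your argument is correct and follows essentially the same route as the paper's own proof: both evaluate the linear functionals $f$ and $\alpha$ on the four points, use the strict inequalities $0<R_\pm<1$ (equivalently $-\tfrac12<f(q_\pm)<\tfrac12$) to place the points in the relative interiors of the two sides $\{\alpha=\pm\tfrac12\sum_{i\in I_+}\alpha_i\}$, and then observe that the $f$-gap between the two marked points on a side is exactly half the side length, which is what makes the subdivision into two centered segments possible. You are somewhat more explicit than the paper — you name the splitting point $f=R_+-\tfrac12$ and verify the two midpoints directly — but this is only a matter of detail, not of method; the paper invokes central symmetry of $P$ to reduce to one side and then states $f(q_+)-f(-q_-)=-\tfrac12$ (which should read $f(q_+)-f(q_-)=\tfrac12$, as your computation makes clear), reaching the same conclusion.
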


\begin{figure}[tbh] 
\begin{center}
\includegraphics[scale = .7]{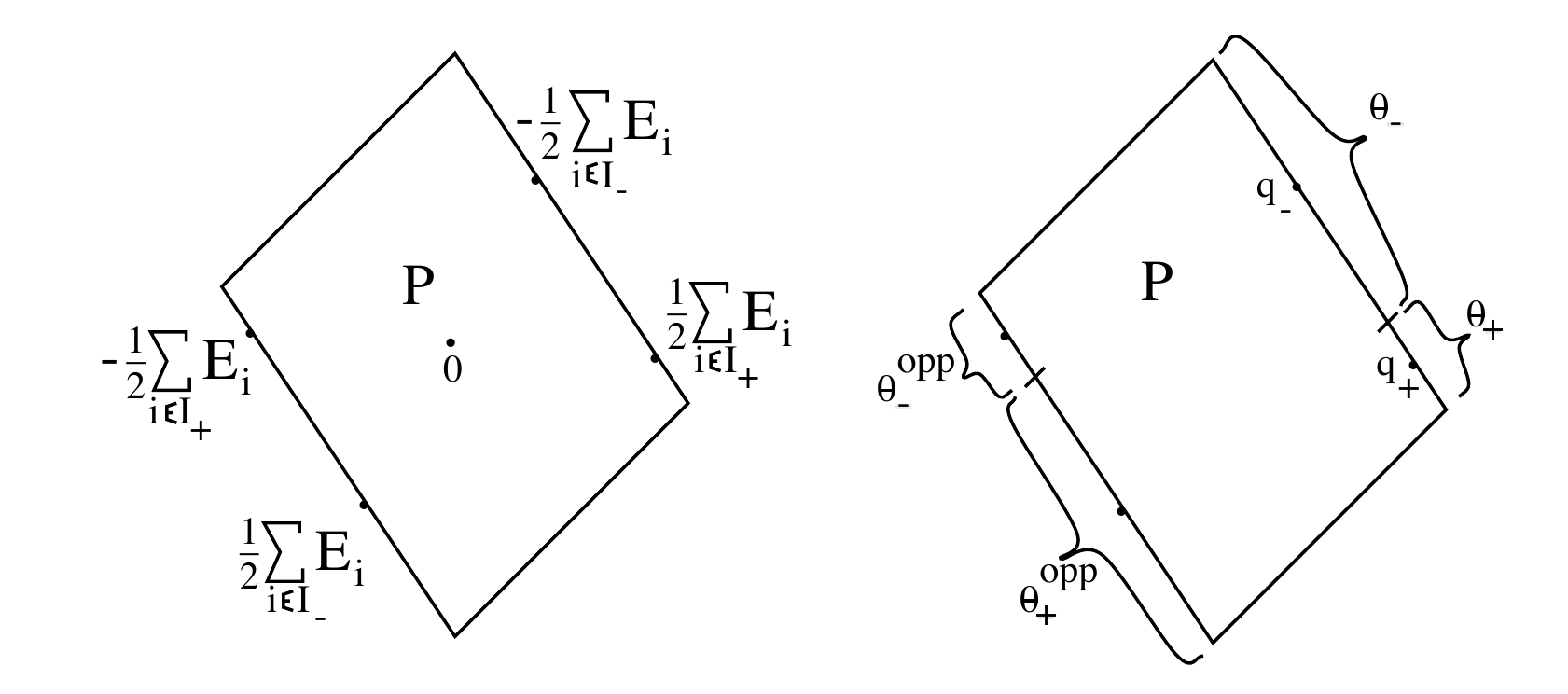}
\end{center}
\caption{\label{FigPic2-2}}
\end{figure}

\begin{proof}
In view of central symmetry of $P$ it suffices 
to show that $q_+=\frac 12 \sum_{i\in I_+} E_i$ and
$q_-=-\frac 12 \sum_{i\in I_-} E_i$ lie on its sides. 
It is clear that
$$
\alpha(q_\pm)=\frac 12\sum_{i\in I_+} \alpha_i,
$$
so it remains to check $f(q_\pm)$.
We have 
$$-\frac 12 =-\frac 12 \sum_{i=1}^nr_i <
-\frac12\sum_{i\in I_-} r_i = f(q_-)<0
<\frac 12\sum_{i\in I_+} r_i = f(q_+)<\frac12.
$$
To show the last statement, observe that 
$f(q_+) - f(-q_-) = -\frac 12$,
so the distance between the two points on the side
of $P$ is half the length of the side of $P$. 
\end{proof}

We will denote the four segments on the sides of $P$
by $\theta_{\pm}$ and $\theta_{\pm}^{\rm opp}$, 
see Figure \ref{FigPic2-2}.
The following proposition is crucial.
\begin{proposition}\label{pic2move}
Let $q$ be a point in the interior of the segment 
$\theta_{\pm}$. Then $q\mp\sum_{i\in I_\pm}E_i$
lies in the interior of the segment 
$\theta_{\mp}^{\rm opp}$, and for any proper nonempty
subset $J\subset I_\pm$ the point $q\mp\sum_{i\in J}E_i$
lies in the interior of $P$.
\end{proposition}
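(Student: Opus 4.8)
The plan is to compute the two coordinate functions $f$ and $\alpha$ on each of the claimed target points and check that they land strictly inside the appropriate region. Since $P$ is cut out by the two pairs of inequalities $|f|\le\frac12$ and $|\alpha|\le\frac12\sum_{i\in I_+}\alpha_i$, and the segments $\theta_\pm,\theta_\pm^{\rm opp}$ are the halves of the two sides $\alpha=\pm\frac12\sum_{i\in I_+}\alpha_i$ (as described in the previous proposition), everything reduces to evaluating these linear functionals on sums $\sum_{i\in I_\pm}E_i$ and $\sum_{i\in J}E_i$.

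First I would record the key identities: $\alpha(\sum_{i\in I_+}E_i)=\sum_{i\in I_+}\alpha_i$ and $\alpha(\sum_{i\in I_-}E_i)=\sum_{i\in I_-}\alpha_i=-\sum_{i\in I_+}\alpha_i$ (using $\sum_i\alpha_i=0$), while $f(\sum_{i\in I_\pm}E_i)=\sum_{i\in I_\pm}r_i$, a number strictly between $0$ and $1$ since all $r_i>0$ and both $I_+,I_-$ are nonempty (Corollary \ref{cor}). For the first claim: if $q\in\theta_+$ then $\alpha(q)=\frac12\sum_{i\in I_+}\alpha_i$, so $\alpha(q-\sum_{i\in I_+}E_i)=\frac12\sum_{i\in I_+}\alpha_i-\sum_{i\in I_+}\alpha_i=-\frac12\sum_{i\in I_+}\alpha_i$, which is exactly the value of $\alpha$ on the opposite side; and for the $f$-coordinate, $f(q)$ ranges over the open half of the $f$-interval that is centered at $f(q_+)=\frac12\sum_{i\in I_+}r_i$ (this is where the description of $\theta_\pm$ from the preceding proposition is used), so $f(q-\sum_{i\in I_+}E_i)=f(q)-\sum_{i\in I_+}r_i$ ranges over the open half-interval centered at $-\frac12\sum_{i\in I_-}r_i=f(-q_-)=f(q_-^{\rm opp})$, i.e.\ precisely the interior of $\theta_-^{\rm opp}$. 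The case $q\in\theta_-$ is symmetric under $x\mapsto -x$.

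For the second claim, take $\emptyset\neq J\subsetneq I_+$ and $q\in\theta_+$, so $q$ is interior to $P$ except that it saturates $\alpha=\frac12\sum_{i\in I_+}\alpha_i$. I must show $q-\sum_{i\in J}E_i$ satisfies both defining inequalities of $P$ strictly. For $\alpha$: since $J$ is a proper nonempty subset of $I_+$ all $\alpha_i$ with $i\in J$ are positive and their sum is strictly between $0$ and $\sum_{i\in I_+}\alpha_i$, so $\alpha(q-\sum_{i\in J}E_i)=\frac12\sum_{i\in I_+}\alpha_i-\sum_{i\in J}\alpha_i$ lies strictly between $-\frac12\sum_{i\in I_+}\alpha_i$ and $\frac12\sum_{i\in I_+}\alpha_i$. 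For $f$: we have $f(q-\sum_{i\in J}E_i)=f(q)-\sum_{i\in J}r_i$; here $f(q)$ lies in the open half-interval centered at $f(q_+)=\frac12\sum_{i\in I_+}r_i$, i.e.\ $f(q)\in(0,\frac12)$, and $0<\sum_{i\in J}r_i<\sum_{i\in I_+}r_i<1$, so I need the resulting value to stay in $(-\frac12,\frac12)$. The lower bound: $f(q)-\sum_{i\in J}r_i>0-\sum_{i\in I_+}r_i>-1$, which is not immediately strong enough, so this is the one spot requiring a slightly finer argument — one uses that $q\in\theta_+$ forces $f(q)>\sum_{i\in I_-}r_i$ is false in general, so instead I would argue directly from the endpoints of $\theta_+$: the segment $\theta_+$ runs between $q_+$ and one vertex of $P$, and translating the whole closed segment $\theta_+$ by $-\sum_{i\in J}E_i$ and checking the two endpoints against the four sides of $P$ (a finite check, using $0<\sum_{i\in J}r_i<\sum_{i\in I_+}r_i$ and the corresponding $\alpha$-inequality) shows the translate lies in $P$, with interiority because $J$ proper nonempty makes every inequality strict at both endpoints. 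By convexity the whole translated segment, in particular $q-\sum_{i\in J}E_i$, lies in the interior of $P$. The case $J\subset I_-$ follows by the central symmetry $x\mapsto -x$.

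The main obstacle, then, is not any of the $\alpha$-computations (those are immediate from $\sum\alpha_i=0$ and the sign conditions) but the bookkeeping for the $f$-coordinate in the second claim: one has to keep careful track of which half of the $f$-interval the point $q\in\theta_\pm$ occupies and verify that subtracting $\sum_{i\in J}r_i$ — an amount strictly less than the full $\sum_{i\in I_\pm}r_i$ that would land us exactly on the opposite side — keeps us strictly inside. Reducing to the two endpoints of $\theta_\pm$ and invoking convexity is the cleanest way to make this rigorous without case-splitting on $q$.
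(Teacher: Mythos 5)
Your proposal follows the same basic route as the paper: evaluate $f$ and $\alpha$ against the defining inequalities of $P$, and for the second claim translate a boundary segment and check interiority. The $\alpha$-computations are correct. However, the intermediate claim that "$f(q)$ lies in the open half-interval centered at $f(q_+)$, i.e.\ $f(q)\in(0,\tfrac12)$" is false in general: $\theta_+$ and $\theta_-$ share the subdivision point at $f$-value $\sum_{i\in I_+}r_i-\tfrac12$, so $\theta_+$ has $f$-range $(\sum_{i\in I_+}r_i-\tfrac12,\tfrac12)$, which contains negative values whenever $\sum_{i\in I_+}r_i<\tfrac12$ (in particular $\theta_\pm$ are generally not halves of the side). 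You noticed the resulting bound was insufficient and switched to the endpoint-plus-convexity argument, which does work: both endpoints of $\theta_+$ translated by $-\sum_{i\in J}E_i$ satisfy all four inequalities strictly (using $0<\sum_{i\in J}r_i<\sum_{i\in I_+}r_i<1$ and $0<\sum_{i\in J}\alpha_i<\sum_{i\in I_+}\alpha_i$), and the open interior of $P$ is convex. There is also a sign slip in the first part: the translated center is $f(q_+)-\sum_{i\in I_+}r_i=-\tfrac12\sum_{i\in I_+}r_i$, not $-\tfrac12\sum_{i\in I_-}r_i$, and $f(-q_-)=+\tfrac12\sum_{i\in I_-}r_i$; this is a labeling confusion about which opposite segment the figure calls $\theta_-^{\rm opp}$ (it is the central reflection of $\theta_+$, centered at $-q_+$). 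The paper's own argument is a leaner version of yours: it observes directly that for proper nonempty $J$, both $f(q\mp\sum_{i\in J}E_i)$ and $\alpha(q\mp\sum_{i\in J}E_i)$ lie strictly between their values at $J=\emptyset$ and at $J=I_\pm$, since all $r_i>0$ and all $\alpha_i$, $i\in I_\pm$, have a fixed sign; your endpoint check is an equivalent but more laborious way to say the same thing.
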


\begin{proof}
Since $2q_{\pm}=\pm\sum_{i\in I_\pm}E_i$,
and $\theta_{\pm}$ has the same length as 
$\theta_{\mp}^{\rm opp}$,
the translate of the interior of $\theta_{\pm}$ by
$\mp\sum_{i\in I_\pm}E_i$ is the interior 
of $\theta_{\mp}^{\rm opp}$.
For each $J\subset I_\pm$
the values of $f(q\mp\sum_{i\in J}E_i)$ and 
$\alpha(q\mp\sum_{i\in J}E_i)$ are in between those for $J=\emptyset$ and $J=I_\pm$, in view of the signs of $r_i$
and $\alpha_i$. This shows that 
$q\mp\sum_{i\in J}E_i$ is in the interior of $P$.
\end{proof}

We are now ready to construct a strong exceptional collection
$S$ in ${\rm Pic}(\PP_{\bf\Sigma})$. Pick a generic point 
$p\in{\rm Pic}_\RR(\PP_{\bf\Sigma})$
so that the lines along the sides of the parallelogram $p+P$ 
do not 
contain any points from ${\rm Pic}_\QQ(\PP_{\bf\Sigma})$.
\begin{theorem}\label{Pic2}
The set $S$ of line bundles $\LL$ such that their
image in ${\rm Pic}_\RR(\PP_{\bf\Sigma})$ lies in
$P+p$ forms a full strong exceptional collection on
$\PP_{\bf\Sigma}$.
\end{theorem}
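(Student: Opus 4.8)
The plan is to verify the two defining conditions of a full strong exceptional collection separately, following the template already used in the Picard number one case. First, to see that $S$ is a strong exceptional collection in the sense of Remark \ref{anyorder}, it suffices to check that for any $\LL_1, \LL_2 \in S$ the line bundle $\LL_2 \otimes \LL_1^{-1}$ is acyclic. The image of this bundle in ${\rm Pic}_\RR(\PP_{\bf\Sigma})$ is a difference of two points of $P+p$, hence lies in $P - P = 2P - (\text{center})$; more precisely it lies in the interior of $2P$ because $p$ was chosen generically so that no lattice point sits on the bounding lines of $p+P$, and therefore the difference of two genuine lattice points cannot land on the boundary of $2P$. By Proposition \ref{interior} the interior of $2P$ meets no forbidden cone, so $\LL_2 \otimes \LL_1^{-1}$ is strongly acyclic, hence acyclic by Proposition \ref{sacisac}. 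This settles the strong exceptionality.

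The substantive part is generation: by Theorem \ref{bh} it is enough to show that every line bundle on $\PP_{\bf\Sigma}$ lies in the triangulated subcategory $D$ generated by $S$. Here I would argue by "sliding the window" $p + P$ in the two directions transverse to its two pairs of sides, using the Koszul complexes coming from the point $(0,\ldots,0) \notin U$ restricted to coordinate subsets. Concretely, fix the pair of sides of $P$ on which the segments $\theta_\pm$ and $\theta_\pm^{\rm opp}$ lie. Moving $p+P$ a small generic amount in the appropriate direction, exactly one new lattice point $x$ enters the window, and as it does so it crosses the side containing $\theta_+$ or $\theta_-$ (say $\theta_+$). By Proposition \ref{pic2move}, for every proper nonempty $J \subsetneq I_+$ the point $x - \sum_{i \in J} E_i$ lies in the interior of $P$ — hence, once the window has moved so that $x$ is visible, all these intermediate points are still in the window and their bundles are already in $D$ — while $x - \sum_{i \in I_+} E_i$ lands in the interior of $\theta_-^{\rm opp}$, i.e. in the *previous* window, so its bundle is also in $D$. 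The Koszul complex
$$
0 \to \OO\Bigl(E - \sum_{i \in I_+} E_i\Bigr) \to \cdots \to \bigoplus_{i \in I_+} \OO(E - E_i) \to \OO(E) \to 0,
$$
where $\OO(E)$ is the new bundle at $x$, is exact on $\PP_{\bf\Sigma}$ because it descends from the Koszul resolution of the origin on $\CC^{I_+}$ pulled back to $U$ (as in \cite{BH} and the Picard number one argument): the vanishing locus of the coordinates indexed by $I_+$ is disjoint from $U$ by Corollary \ref{cor}, since $I_+$ is not the complement of a face. All terms except $\OO(E)$ are in $D$, so $\OO(E) \in D$. The analogous argument using $I_-$ handles sliding in the other sense, and sliding $p+P$ in the direction of the other pair of sides is governed by the segments $\theta_\pm^{\rm opp}$ symmetrically. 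Iterating, we reach every lattice point of ${\rm Pic}(\PP_{\bf\Sigma})$ in finitely many steps, so $D$ is everything.

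Finally, one should check the count: the number of lattice points in $p+P$ equals ${\rm rk}(N)!\,\Vol(\Delta)$ up to the torsion factor, matching the rank of $K$-theory as demanded by Remark \ref{what}; this follows because the area of $P$ in the standard lattice of ${\rm Pic}_\RR$ is $\frac14 \cdot (\text{two quantities read off from } f \text{ and } \alpha)$, which one computes to be exactly this number. The main obstacle I anticipate is not any single estimate but the bookkeeping of the induction in two directions: one must be careful that when a new point enters the window, *all* the intermediate Koszul terms $x - \sum_{i\in J} E_i$ have already been produced, and this is precisely what Proposition \ref{pic2move} is designed to guarantee — the key point being that the subdivision of the sides of $P$ into the segments $\theta_\pm, \theta_\pm^{\rm opp}$ is tailored so that the crossing point and all its Koszul descendants stay inside consecutive translates of the window. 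Verifying that this holds no matter which side the new point crosses, and that the two sliding directions do not interfere, is the delicate part.
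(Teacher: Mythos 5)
Your strong-exceptionality argument and the first stage of generation (slide $p+P$ along the line $f=$const, use Proposition~\ref{pic2move} to place the Koszul intermediates, and appeal to Corollary~\ref{cor} to see that the $I_\pm$-Koszul complex is exact on $\PP_{\bf\Sigma}$) match the paper's proof. The gap is in your handling of the other direction. You claim that ``sliding $p+P$ in the direction of the other pair of sides is governed by the segments $\theta_\pm^{\rm opp}$ symmetrically,'' but all four segments $\theta_\pm,\theta_\pm^{\rm opp}$ sit on the same pair of opposite sides of $P$, namely the ones cut out by $\vert\alpha(x)\vert = \frac 12\sum_{i\in I_+}\alpha_i$; the $\vert f(x)\vert=\frac 12$ sides carry no such subdivision, so nothing in Proposition~\ref{pic2move} speaks to a point entering there.

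The problem is not only notational. If you slide the parallelogram $p+P$ directly in the $f$-direction and a new lattice point $x$ crosses the side $f(x-p)=\frac 12$ near one of its endpoints, the Koszul intermediates $x-\sum_{i\in J}E_i$ for $J\subseteq I_+$ move strictly downward in $\alpha$, so some of them exit the window through the $\alpha$-boundary before they can be declared in $D$; choosing $I_-$ instead reverses the direction in $\alpha$, but creates the symmetric failure at the other endpoint. So ``iterating'' the single sliding step in two independent directions does not close the argument. The paper avoids this by a two-stage construction: first prove that $D$ contains \emph{every} line bundle in the infinite slab $\vert f(x-p)\vert\leq\frac 12$ (so the $\alpha$-constraint disappears entirely), and only then extend the slab in the $f$-direction using the Koszul complex for $I_+$ together with the inequalities $-1<f(-\sum_{i\in J}E_i)<0$ for $\emptyset\neq J\subseteq I_+$. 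You need something like this second stage to complete the proof.
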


\begin{proof}
First of all, we will show that this set forms a strong 
exceptional collection. For this it suffices to show 
that the difference of any two points in the interior of $p+P$
lies outside of the forbidden cones. Since $p+P-(p+P)=2P$,
this statement follows from Proposition \ref{interior}. 

In view of Theorem \ref{bh}, we now need to show that the
category $D$ generated by the line bundles from $S$
contains all line bundles.  At the first step of the construction
we will move the polytope $p+P$ by moving
the 
point $p$ in the line with constant $f(p)$. We claim 
that the 
newly appearing line bundles lie in $D$. Let us first 
show it for the direction indicated by the arrow in
Figure \ref{FigPic2-3}.

\begin{figure}[tbh] 
\begin{center}
\includegraphics[scale = .7]{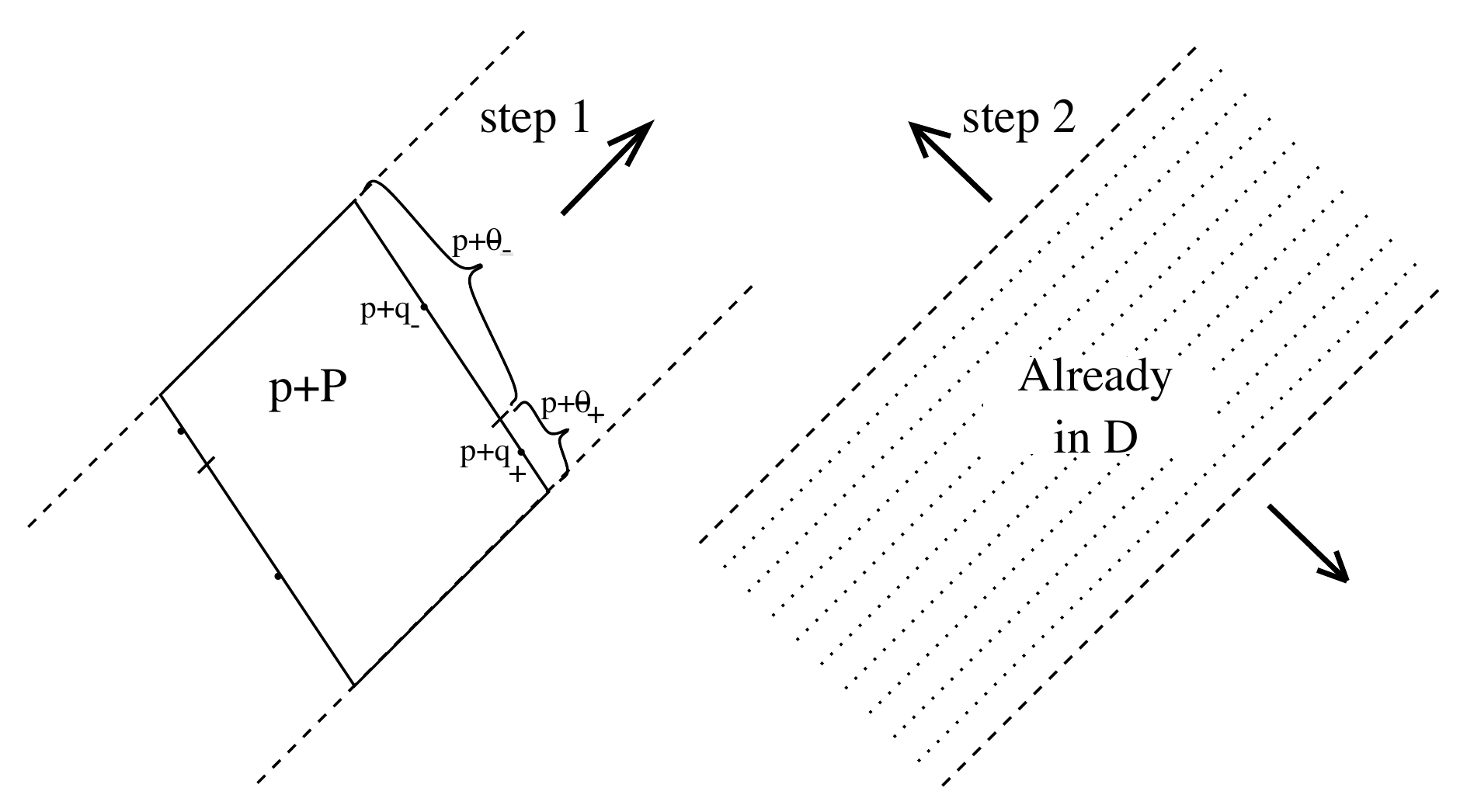}
\end{center}
\caption{\label{FigPic2-3}}
\end{figure}

Every time that the image in ${\rm Pic}_\RR(\PP_{\bf\Sigma})$
of a line bundle $\LL=\OO(E)$ fits into $p+P$, this image
will be in the interior of $p+\theta_\pm$, since we can assure
that the intersection point of $p+\theta_+$ and $p+\theta_-$ 
is moving
along a non-rational line by a generic choice of $(r_i)$ and $p$.
Suppose that the image of $\LL$ lies in $\theta_+$.
Proposition \ref{pic2move} then implies that 
for any nonempty $J\subset I_+$ the line bundle
$\OO(E-\sum_{i\in J}E_i)$
lies in $D$. 
Consider the Koszul complex on $\CC^n$ given $z_i,i\in I_+$.
It resolves the structure sheaf of a coordinate subspace
which is outside of $U$. This yields 
a long exact sequence of sheaves on $\PP_{\bf\Sigma}$
(see also \cite{BH}), which after twisting by $\LL$ becomes
$$
0\to \OO(E-\sum_{i\in I_+}E_i) \to \ldots \to \oplus_{i\in I_+} \OO(E-E_i) \to \LL\to 0.
$$
All but the last 
terms of this sequence lie in $D$, hence $\LL$ lies in $D$.

The calculation for the case when the image of $\LL$ is
in $p+\theta_-$ is completely analogous, as are the calculations
for when the point $p$ is moving in the opposite direction.
As such, they are left to the reader.

So now we have shown that all $\LL$ with the property
that their image $q$ in ${\rm Pic}_\RR(\PP_{\bf\Sigma})$
satisfies $\vert f(q-p) \vert \leq \frac 12$  lie in $D$. 
The second step of the construction 
is to move this whole slab in both directions
by making similar use of the Koszul relation for $I_+$
(or $I_-$, at this stage either one of the two suffices),
see Figure \ref{FigPic2-3}. We make use of the inequalities 
 $-1< f(-\sum_{i\in J}E_i) <  0$ for any 
$\emptyset\neq J\subseteq I_+$.
This finishes the proof.
\end{proof}

\begin{remark}\label{numpic2}
Similar to Remark \ref{numpic1},
it can be shown that, with the area form
that makes the volume of ${\rm Pic}_\RR(\PP_{\bf\Sigma})/{\rm Pic}(\PP_{\bf\Sigma})$ equal the 
order of the torsion subgroup of ${\rm Pic}(\PP_{\bf\Sigma})$,
the area of the parallelogram 
$P$ is $(n-2)!\Vol(\Delta)$. It can also be shown that the 
number of elements of $S$ equals 
$(n-2)!\Vol(\Delta)$.
This is, again, expected, since the number of elements
of $S$ needs to coincide with the rank of the Grothendieck group of $\PP_{\bf\Sigma}$.
\end{remark}

\begin{remark}
The case of toric varieties of Picard number at most two
has been settled in \cite{CM} by a different method.
Notice that \cite{CM} does not assume that the variety
is Fano.
We thank Rosa Mir\'o-Roig for bringing this article 
to our attention.
\end{remark}

\section{The case of del Pezzo toric stacks, the preliminaries}
\label{sec.dP.prelim}
In this section we will describe a construction in convex geometry which will eventually 
allow us to prove Conjecture \ref{fsec.conj} in the case of 
del Pezzo toric Deligne-Mumford stacks. The reader may
refer to Section \ref{sec.ex} for the example of this construction
for the case $n=5$.

Let $\Delta=A_1A_2\ldots A_n$ be a convex $n$-gon in $N=\ZZ^2$,
with the vertices counted clockwise, which contains $0$ in its
interior. Let ${\bf \Sigma}$ be the corresponding stacky fan and $\PP_{\bf \Sigma}$ the corresponding del Pezzo DM stack. As before,
we denote by $v_i$ the vector from $0$ to $A_i$ and by 
$E_i$ the corresponding elements of the Picard group of $\PP_{\bf\Sigma}$.

We will first introduce additional notation. Recall that by 
Proposition \ref{allaregiven} the Picard group
${\rm Pic}(\PP_{\bf\Sigma})$ is the quotient of $\ZZ^n$ with 
basis $E_i$ by the linear relations 
$$
\sum_{i=1}^n (w\cdot v_i) E_i 
$$
for $w\in N^*$. 

\begin{definition} 
We mod out by the span of the canonical divisor to
define the group $\widehat {\rm Pic}(\PP_{\bf\Sigma})$ 
by
$$\widehat {\rm Pic}(\PP_{\bf\Sigma})
=
{\rm Pic}(\PP_{\bf\Sigma})/\ZZ(\sum_{i=1}^n E_i).
$$
We denote by $\widehat {\rm Pic}_\RR(\PP_{\bf\Sigma})$
its tensor product with $\RR$. We denote by $\widehat E_i$
the images of $E_i$ in $\widehat {\rm Pic}(\PP_{\bf\Sigma})$
and $\widehat {\rm Pic}_\RR(\PP_{\bf\Sigma})$.
\end{definition}

\begin{definition}\label{defQ}
We denote by $Q$ the convex polytope in 
$\widehat {\rm Pic}_\RR(\PP_{\bf\Sigma})$
which is the convex hull of the points 
$\widehat E_I=\sum_{i\in I} \widehat E_i$ for all subsets 
$I \subseteq \{1,\ldots,n \}$.
\end{definition}

\begin{remark}
Polytope $Q$ is the Minkowski sum of line segments
$[0,\widehat E_i]$ and is thus a zonotope. The condition 
$\sum_{i=1}^n \widehat E_i=0$ ensures that the center of 
symmetry of $Q$ is $0$. 
\end{remark}

The following proposition describes the vertices of $Q$.
\begin{proposition}\label{QandForb}
The point $\widehat E_I$ is a vertex of $Q$ if and only if 
$I$ is a nonempty proper subset and $q_I=-\sum_{i\not\in I}E_i$
is a forbidden point. Equivalently, $\widehat E_I$ is a 
vertex of $Q$ if and only if 
the simplicial complex $C_I$ contains more than one 
connected component. 
\end{proposition}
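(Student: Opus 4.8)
The plan is to describe the vertices of the zonotope $Q=\sum_{i=1}^n[0,\widehat E_i]$ purely combinatorially. First, by Proposition~\ref{allaregiven} and the definition of $\widehat{\rm Pic}$, the linear functionals on $\widehat{\rm Pic}_\RR$ are exactly the tuples $(c_i)\in\RR^n$ with $\sum_i c_i=0$ and $\sum_i c_i v_i=0$, the value of such a functional on $\widehat E_i$ being $c_i$. In particular $\widehat E_i\neq 0$ for every $i$: otherwise every such tuple would satisfy $c_i=0$, which by a dimension count forces all $v_j$ with $j\neq i$ onto one affine line, impossible when $n\geq 4$ since they are among the vertices of the convex polygon $\Delta$ (the case $n\leq 3$ has Picard number at most one and was treated in Section~\ref{sec5}). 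Now, for a zonotope $\sum_i[0,w_i]$ with all $w_i\neq 0$, a subset sum $\sum_{i\in I}w_i$ is a vertex if and only if some linear functional $\ell$ satisfies $\ell(w_i)>0$ for $i\in I$ and $\ell(w_i)<0$ for $i\notin I$; this follows by examining the support function $\ell\mapsto\sum_i\max(0,\ell(w_i))$, whose maximizer is the claimed point and is unique exactly when no $\ell(w_i)$ vanishes. Combining the two observations, $\widehat E_I$ is a vertex of $Q$ if and only if there is a tuple $(c_i)$ with $\sum_i c_i=0$, $\sum_i c_i v_i=0$, $c_i>0$ on $I$ and $c_i<0$ on $\bar I$.

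Next I would homogenize: writing $\hat v_i=(v_i,1)\in N_\RR\oplus\RR$, the conditions above say that some strictly positive combination of $\{\hat v_i:i\in I\}$ equals some strictly positive combination of $\{\hat v_j:j\in\bar I\}$. Because the relative interior of a polyhedral cone is the set of strictly positive combinations of its generators, and the slice at last coordinate $1$ of the cone on $\{\hat v_i:i\in I\}$ is $\Delta_I:={\rm ConvexHull}(v_i:i\in I)$, this is equivalent to ${\rm relint}(\Delta_I)\cap{\rm relint}(\Delta_{\bar I})\neq\emptyset$. (If $I$ or $\bar I$ is empty both sides fail, so assume both are nonempty, i.e. $I$ is a nonempty proper subset.)

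The remaining step is the planar convex geometry, and this is where I expect the real work to be. By the proper separation theorem, ${\rm relint}(\Delta_I)\cap{\rm relint}(\Delta_{\bar I})=\emptyset$ if and only if $\Delta_I$ and $\Delta_{\bar I}$ are separated by a line $L$ not containing both of them. Since the $v_i$ are the vertices of the convex polygon $\Delta$, the vertices lying in a given open halfplane bounded by $L$ are consecutive, and at most two vertices lie on $L$; keeping track of these, one checks that such an $L$ exists precisely when $I$ is a single cyclic arc of consecutive $v_i$'s (conversely, for such an $I=\{v_a,\dots,v_b\}$ the line through $v_a$ and $v_b$ works). Hence $\widehat E_I$ is a vertex of $Q$ exactly when $I$ is a nonempty proper subset that is not a single cyclic arc, i.e. when $I$ meets the cyclic order on $\{1,\dots,n\}$ in at least two blocks. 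But $C_I$ is exactly the induced subgraph of the $n$-cycle $v_1\cdots v_n$ on the vertex set $I$, so "at least two blocks" is literally "more than one connected component", which gives the stated equivalent form. Finally, an induced subgraph of a cycle on a proper vertex subset is a disjoint union of paths, hence has trivial reduced homology in all degrees above $0$; so it has nontrivial reduced homology if and only if it has more than one connected component, and by Definition~\ref{forbidden} this is the same as: $I$ is a nonempty proper subset with $q_I$ a forbidden point. This closes the chain of equivalences.

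The main obstacle, as indicated, is the last paragraph: passing from "$\widehat E_I$ is a vertex" to "${\rm relint}(\Delta_I)$ meets ${\rm relint}(\Delta_{\bar I})$" is formal, but proving that proper separability of $\Delta_I$ and $\Delta_{\bar I}$ forces $I$ to be a cyclic arc needs care about vertices lying on the separating line, and about the degenerate cases in which $\Delta_I$ or $\Delta_{\bar I}$ is a segment or a single point.
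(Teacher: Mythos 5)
Your argument is correct and follows essentially the same route as the paper's proof: vertices of the zonotope $Q$ are characterized by linear functionals with prescribed signs, such functionals are exactly the affine relations $\sum_i c_i=0$, $\sum_i c_i v_i=0$, the sign condition is translated into ${\rm relint}(\Delta_I)\cap{\rm relint}(\Delta_{\bar I})\neq\emptyset$ (the paper's equation \eqref{bary}), and this is matched with $C_I$ being disconnected. The only difference is that you spell out the final planar separation argument and the nonvanishing of the $\widehat E_i$, steps the paper dismisses as straightforward, and your treatment of them is sound.
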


\begin{proof}
It is clear that the set of vertices of $Q$ is a subset 
of the set of $\widehat E_I$. For $\widehat E_I$ to
be a vertex of $Q$ there has to exist a linear function
on $\widehat {\rm Pic}_\RR(\PP_{\bf\Sigma})$ which is 
maximized on it among other vertices of $Q$. In other
words, this linear function should take positive values 
on $\widehat E_i$ for $i\in I$ and negative values 
on $\widehat E_i$ for $i\not\in I$. Linear functions
$f$ on $\widehat {\rm Pic}_\RR(\PP_{\bf\Sigma})$
are collections of $n$ real numbers $r_i=f(\widehat E_i)$
that satisfy $\sum_{i=1}^n r_i=0$ and 
$\sum_{i=1}^n (w\cdot v_i) r_i = 0$
for all $w\in N^*$. In other words, $\sum_i r_i=0$ 
and 
\begin{equation}\label{rs}
\sum_{i=1}^n r_i v_i=0.
\end{equation}
Since $\sum_i r_i=0$,
it means that $I$ and its complement are nonempty.
We can then write \eqref{rs} as
\begin{equation}\label{bary}
\frac 1{\sum_{i\in I} r_i}\sum_{i\in I} r_i v_i 
= 
\frac 1{\sum_{i\not\in I} (-r_i)} \sum_{i\not\in I}(-r_i)v_i.
\end{equation}
So the existence
of the linear function with the required property is
equivalent to the condition that $I$ is proper and nonempty
and the relative interiors
of the convex hulls 
${\rm conv}(\{v_i,i\in I\})$ and ${\rm conv}(\{v_i,i\not\in I\})$
intersect. It is straightforward to see that in a convex 
polygon $\Delta$ the latter condition is equivalent
to $C_I$ having 
more than one connected component.
\end{proof}

\begin{remark}
Already in dimension three, the condition that 
relative interiors of
${\rm conv}(\{v_i,i\in I\})$ and ${\rm conv}(\{v_i,i\not\in I\})$
intersect is only necessary, but not sufficient to assure
that $C_I$ is not acyclic. Consequently, if dimension of 
$\Delta$ is bigger than two, then some vertices of $Q$ 
may not be images of forbidden points. 
\end{remark}

\begin{proposition}
For a vertex $\widehat E_I$ of $Q$ the image of the corresponding
forbidden cone $F_I$ under the projection 
${\rm Pic}_\RR(\PP_{\bf\Sigma})\to\widehat 
{\rm Pic}_\RR(\PP_{\bf\Sigma})$
is the opposite of the angle cone. In other words,
the image of $F_I$ is 
$$
\widehat E_I-\RR_{\geq 0}(Q-\widehat E_I).
$$
\end{proposition}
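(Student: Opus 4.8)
The plan is to unwind the definition of the forbidden cone $F_I$ from Definition \ref{forbidden} and track what happens to it under the projection $\pi\colon {\rm Pic}_\RR(\PP_{\bf\Sigma})\to\widehat{\rm Pic}_\RR(\PP_{\bf\Sigma})$, then compare the result with the cone $\widehat E_I-\RR_{\geq 0}(Q-\widehat E_I)$. Recall that $F_I=q_I+\sum_{i\in I}\RR_{\geq 0}E_i-\sum_{i\not\in I}\RR_{\geq 0}E_i$ with $q_I=-\sum_{i\not\in I}E_i$. Applying $\pi$ and using $\pi(-\sum_{i\not\in I}E_i)=\pi(\sum_{i\in I}E_i)=\widehat E_I$ (which holds because $\sum_{i=1}^n\widehat E_i=0$), we get immediately
$$
\pi(F_I)=\widehat E_I+\sum_{i\in I}\RR_{\geq 0}\widehat E_i-\sum_{i\not\in I}\RR_{\geq 0}\widehat E_i .
$$
So the statement reduces to the purely combinatorial identity
$$
\sum_{i\in I}\RR_{\geq 0}\widehat E_i-\sum_{i\not\in I}\RR_{\geq 0}\widehat E_i
\;=\;-\RR_{\geq 0}(Q-\widehat E_I),
$$
i.e. that the cone spanned by $\{\widehat E_i:i\in I\}\cup\{-\widehat E_i:i\not\in I\}$ is the negative of the tangent cone of the zonotope $Q$ at its vertex $\widehat E_I$.

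First I would establish the inclusion $-\RR_{\geq 0}(Q-\widehat E_I)\subseteq \sum_{i\in I}\RR_{\geq 0}\widehat E_i-\sum_{i\not\in I}\RR_{\geq 0}\widehat E_i$. Since $Q$ is the Minkowski sum $\sum_{j=1}^n[0,\widehat E_j]$, any point of $Q$ has the form $\sum_j t_j\widehat E_j$ with $t_j\in[0,1]$, while $\widehat E_I=\sum_{i\in I}\widehat E_i$. Hence a generator $w-\widehat E_I$ of $Q-\widehat E_I$ equals $-\sum_{i\in I}(1-t_i)\widehat E_i+\sum_{i\not\in I}t_i\widehat E_i$, so $\widehat E_I-w=\sum_{i\in I}(1-t_i)\widehat E_i-\sum_{i\not\in I}t_i\widehat E_i$ lies in the claimed cone; closure under $\RR_{\geq 0}$-scaling finishes this direction. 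For the reverse inclusion, I would invoke the linear function $f$ produced in the proof of Proposition \ref{QandForb}: because $\widehat E_I$ is a vertex of $Q$, there is a linear $f$ with $f(\widehat E_i)>0$ for $i\in I$ and $f(\widehat E_i)<0$ for $i\not\in I$. This $f$ is strictly negative on every nonzero element of the cone $\sum_{i\in I}\RR_{\geq 0}\widehat E_i-\sum_{i\not\in I}\RR_{\geq 0}\widehat E_i$ with at least one strictly positive coefficient; combined with the fact that moving from $\widehat E_I$ along any edge direction $-\widehat E_i$ (for $i\in I$) or $+\widehat E_i$ (for $i\not\in I$) stays inside $Q$, one sees that locally near $\widehat E_I$ the polytope $Q$ fills out exactly the negative of that cone, so the tangent cone of $Q$ at $\widehat E_I$ is $\widehat E_I-\bigl(\sum_{i\in I}\RR_{\geq 0}\widehat E_i-\sum_{i\not\in I}\RR_{\geq 0}\widehat E_i\bigr)$, which is what we want.

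The one point requiring genuine care — the main obstacle — is the claim that the tangent cone of the zonotope $Q$ at the vertex $\widehat E_I$ is spanned precisely by the edge directions $-\widehat E_i$ ($i\in I$) and $\widehat E_i$ ($i\not\in I$), with no extra rays and no missing ones. For a general zonotope a vertex can be incident to more edges than the number of summand segments, but here the separating linear function $f$ above shows that the face structure of $Q$ at $\widehat E_I$ is governed by the partition $I\sqcup\bar I$: for $\varepsilon>0$ small, a point $\widehat E_I+\sum_j u_j\widehat E_j$ (with $|u_j|\le\varepsilon$ realizing a point of $Q$, so $u_i\le 0$ for $i\in I$ and $u_i\ge 0$ for $i\not\in I$) ranges over exactly a neighborhood of $\widehat E_I$ in $\widehat E_I+\bigl(-\sum_{i\in I}\RR_{\geq 0}\widehat E_i+\sum_{i\not\in I}\RR_{\geq 0}\widehat E_i\bigr)$. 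I would spell this out by noting that the face of $Q$ maximizing $f$ is the single point $\widehat E_I$, so a standard fact about zonotopes (the tangent cone at a vertex is generated by the negatives/copies of those segment directions $\widehat E_i$ that are "active," i.e. non-degenerate, at that vertex — here all of them, since all $f(\widehat E_i)\ne 0$) gives the result. With that in hand the two inclusions above combine to the desired equality $\pi(F_I)=\widehat E_I-\RR_{\geq 0}(Q-\widehat E_I)$.
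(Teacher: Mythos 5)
Your proposal is correct and is essentially the paper's own (the paper treats the statement as immediate from Definition \ref{forbidden} and the definition of $Q$): project $F_I$, use $\sum_i\widehat E_i=0$ to get $\pi(q_I)=\widehat E_I$, and identify $-\RR_{\geq 0}(Q-\widehat E_I)$ with the cone generated by $\widehat E_i$, $i\in I$, and $-\widehat E_i$, $i\notin I$. Two minor remarks: the separating function $f$ is strictly \emph{positive}, not negative, on nonzero elements of the cone $\sum_{i\in I}\RR_{\geq 0}\widehat E_i-\sum_{i\notin I}\RR_{\geq 0}\widehat E_i$ (this slip is harmless), and the reverse inclusion needs no tangent-cone or zonotope-vertex machinery at all: given $-\sum_{i\in I}a_i\widehat E_i+\sum_{i\notin I}b_i\widehat E_i$ with $a_i,b_i\geq 0$, scale by small $\lambda>0$ so all coefficients are at most $1$; then $\widehat E_I+\lambda\bigl(-\sum_{i\in I}a_i\widehat E_i+\sum_{i\notin I}b_i\widehat E_i\bigr)=\sum_{i\in I}(1-\lambda a_i)\widehat E_i+\sum_{i\notin I}\lambda b_i\widehat E_i$ has all Minkowski coefficients in $[0,1]$, hence lies in $Q$, which gives the inclusion directly (and shows the vertex hypothesis is not even needed for this identity).
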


\begin{proof}
This follows immediately from the definition of $Q$ and the 
description of $F_I$ in Definition \ref{forbidden}.
\end{proof}

The argument of Proposition \ref{QandForb}
can be generalized to describe 
all faces of $Q$, and in
particular its facets.
\begin{proposition}\label{faceQ}
Faces of $Q$ correspond to ordered pairs of disjoint subsets 
$I$ and $J$ of $\{1,\ldots,n\}$, such that the relative interiors of 
the convex 
hulls ${\rm conv}(\{v_i,i\in I\})$ and ${\rm conv}(\{v_i,i\in J\})$
intersect. In particular, facets of $Q$ are in one-to-one correspondence with ordered pairs of intersecting diagonals in $\Delta$. Specifically, for a pair $(I,J)$, the corresponding face
is given by 
$$\theta_{I,J}
=\widehat E_I + \sum_{i\not\in I\cup J} [0,\widehat E_i].
$$
\end{proposition}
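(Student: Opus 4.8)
The plan is to analyze linear functionals on $\widehat{\rm Pic}_\RR(\PP_{\bf\Sigma})$ exactly as in the proof of Proposition \ref{QandForb}, but now parametrizing \emph{faces} rather than only vertices. Since $Q$ is the zonotope $\sum_{i=1}^n [0,\widehat E_i]$, a linear function $f$ attains its maximum on $Q$ over the face $\sum_{i: f(\widehat E_i)>0}\{\widehat E_i\} + \sum_{i: f(\widehat E_i)=0}[0,\widehat E_i]$; thus every face of $Q$ has the asserted form $\theta_{I,J}$ where $I=\{i: f(\widehat E_i)>0\}$ and $J=\{i: f(\widehat E_i)<0\}$, and the two sets are automatically disjoint. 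First I would recall, as in Proposition \ref{QandForb}, that a linear function on $\widehat{\rm Pic}_\RR(\PP_{\bf\Sigma})$ is the same as a tuple $(r_i)$ with $\sum_i r_i=0$ and $\sum_i r_i v_i=0$. Then I would argue that such a tuple with sign pattern exactly $(I,J)$ on its support exists if and only if, rewriting $\sum_i r_i v_i = 0$ in barycentric form as in \eqref{bary}, the relative interiors of ${\rm conv}(\{v_i,i\in I\})$ and ${\rm conv}(\{v_i, i\in J\})$ intersect. This gives the bijection between faces of $Q$ and ordered pairs $(I,J)$ of disjoint subsets whose convex hulls' relative interiors meet; the face formula $\theta_{I,J} = \widehat E_I + \sum_{i\notin I\cup J}[0,\widehat E_i]$ then follows directly from the zonotope description.

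Second, I would identify when $\theta_{I,J}$ is a facet. The dimension of $\theta_{I,J}$ equals the rank of the sublattice spanned by $\{\widehat E_i : i\notin I\cup J\}$ inside $\widehat{\rm Pic}_\RR$, which has dimension $n-3$. Since ${\rm rk}\,{\rm Pic}(\PP_{\bf\Sigma})=n-2$, a facet has dimension $n-3$, which forces $\sum_{i\notin I\cup J}[0,\widehat E_i]$ to be $(n-3)$-dimensional — equivalently, $|I\cup J|$ is as small as possible given that the relative interiors still meet, which happens precisely when $I=\{i_1\}$, $J=\{i_2\}$ with the open segments (diagonals) $(v_{i_1})\cdots$ — more precisely when ${\rm conv}(\{v_i : i\in I\})$ and ${\rm conv}(\{v_i : i\in J\})$ are each a single vertex or an edge and the relevant affine pieces cross. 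Working this out in the polygon $\Delta$: the minimal configurations where relative interiors of two disjoint convex hulls of vertices of a convex polygon intersect are exactly pairs of crossing diagonals $A_aA_b$ and $A_cA_d$ (with $a,b,c,d$ distinct and interleaved around $\Delta$); any larger $I\cup J$ drops the dimension of $\sum_{i\notin I\cup J}[0,\widehat E_i]$ below $n-3$ because the $\widehat E_i$ then satisfy an extra relation coming from the two linear relations $\sum r_iv_i=0$, $\sum r_i=0$ restricted appropriately. So facets $\leftrightarrow$ ordered pairs of crossing diagonals, as claimed.

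The main obstacle I expect is the combinatorial geometry of \textbf{step two}: cleanly proving that in a convex polygon the only minimal (facet-producing) intersecting configurations are crossing diagonals, and matching the dimension count $n-3$ to the condition $|I\cup J|=4$ with $I,J$ singletons. This requires knowing that for a convex polygon, if relative interiors of ${\rm conv}(\{v_i : i\in I\})$ and ${\rm conv}(\{v_i : i\in J\})$ intersect, one can shrink $I$ and $J$ (by Carath\'eodory in the plane) to subsets of size at most $2$ each while preserving intersection of relative interiors, and that preserving intersection forces a genuine crossing rather than containment or touching. The rest — verifying that $\theta_{I,J}$ has the stated form and that distinct ordered pairs give distinct faces (using that all $\alpha_i$-type relations are nonzero and $\Delta$ is a genuine convex polygon with $0$ in its interior) — is a routine consequence of the zonotope structure and the computations already carried out in Propositions \ref{alpha}, \ref{pic2fan}, and \ref{QandForb}.
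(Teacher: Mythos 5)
Your first step coincides with the paper's own proof: you identify linear functionals on $\widehat{\rm Pic}_\RR(\PP_{\bf\Sigma})$ with tuples $(r_i)$ satisfying $\sum_i r_i=0$ and $\sum_i r_iv_i=0$, observe that a realizable sign pattern $(I,J)$ is exactly one for which the relative interiors of the two convex hulls meet, and use the fact that the maximum set of a functional on a Minkowski sum is the Minkowski sum of the individual maximum sets to obtain $\theta_{I,J}$. Where you diverge is the identification of the facets: the paper notes that the correspondence reverses the inclusion order on pairs $(I,J)$, so facets come from the minimal pairs, which in a convex polygon are precisely the pairs of crossing diagonals; you instead attempt a dimension count, and this is where the proposal has genuine problems.

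Concretely: first, since $\dim\widehat{\rm Pic}_\RR(\PP_{\bf\Sigma})=n-3$ and $Q$ is full dimensional there, a facet has dimension $n-4$, not $n-3$ as you assert ($n-3$ is the dimension of $Q$ itself). Second, your description of the minimal intersecting configurations (``$I=\{i_1\}$, $J=\{i_2\}$'', or ``a single vertex or an edge'') is incorrect: for vertices of a convex polygon no vertex lies in the relative interior of the convex hull of other vertices, so the minimal configurations are exactly $|I|=|J|=2$ with interleaved indices, i.e.\ genuinely crossing diagonals. Third, and most seriously, your dimension bookkeeping conflates functionals with relations. The dimension of $\theta_{I,J}$ is the dimension of the span of $\{\widehat E_i:\ i\notin I\cup J\}$, and a nontrivial relation $\sum_{i\notin I\cup J}c_i\widehat E_i=0$ exists if and only if there is a nonzero affine function $c_i=w\cdot v_i+\lambda$ vanishing at every $v_i$ with $i\in I\cup J$; it has nothing to do with ``restricting'' the conditions $\sum_i r_iv_i=0$, $\sum_i r_i=0$, which describe functionals on $\widehat{\rm Pic}_\RR(\PP_{\bf\Sigma})$, i.e.\ affine relations among the $v_i$. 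Because of this confusion you never verify the one fact your route actually needs: that for a crossing pair (so $|I\cup J|=4$) the remaining $n-4$ generators $\widehat E_i$ are linearly independent, hence $\dim\theta_{I,J}=n-4$ and the face is indeed a facet. This does hold, because no nonzero affine function vanishes at four vertices of a convex polygon (they are not collinear), but it requires exactly this argument, whereas your ``extra relation'' heuristic, taken at face value, would suggest the dimension drops. The remaining cases are easy: if $|I\cup J|\geq 5$ then $\dim\theta_{I,J}\leq n-|I\cup J|<n-4$, and $|I\cup J|\leq 3$ is excluded by convex position. With these repairs your dimension-count route becomes a legitimate alternative to the paper's order-reversal argument; as written, the facet step does not go through.
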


\begin{proof}
Faces of $Q$ are maximum sets on $Q$ of the linear functions on
$\widehat {\rm Pic}_\RR(\PP_{\bf\Sigma})$.
If a linear function $f$ is given by $r_i=f(\widehat E_i)$,
then consider the set $I$ of indices $i$ for which $r_i>0$
and the set $J$ of indices $j$ for which $r_j<0$. 
As before, we see that the relative interiors of the
convex hulls ${\rm conv}(\{v_i,i\in I\})$ and ${\rm conv}(\{v_i,i\in J\})$
intersect. Vice versa, any such intersection allows us
to define $r_i$ that give a linear function on 
$\widehat {\rm Pic}_\RR(\PP_{\bf\Sigma})$.
The maximum set of a linear function on the Minkowski sum 
of polytopes is the Minkowski sum of its maximum sets on
the individual polytopes. Hence, we get the formula for 
$\theta_{I,J}$. Finally, the pairs $(I,J)$ with the above property
are partially ordered by inclusion. This partial order
is the reverse of the inclusion order of the faces. So the facets 
correspond to the minimum pairs $(I,J)$ with 
${\rm conv}(\{v_i,i\in I\})\cap {\rm conv}(\{v_i,i\in J\})\neq \emptyset$,
and these are precisely the pairs of intersecting diagonals.
\end{proof}

\begin{remark}\label{noshape}
The condition 
${\rm conv}(\{v_i,i\in I\})\cap {\rm conv}(\{v_i,i\in J\})\neq \emptyset$,
on $I$ and $J$ above is purely combinatorial,
in the sense that it does not depend on the geometry of $\Delta$,
provided that $\Delta$ is convex. Specifically, it is equivalent
to the existence of indices $i_1,i_2\in I$ and $j_1,j_2\in J$,
such that $i_1,j_1,i_2,j_2$ are counted clockwise, if one 
sets $\{1,\ldots,n\}$ in a clockwise circle.
\end{remark}

Our next goal is to construct a polytope $\widehat P$ 
in $\widehat {\rm Pic}_\RR(\PP_{\bf\Sigma})$ with centrally 
symmetric faces which
has the peculiar property that the midpoints of all facets
of $\widehat P$ are vertices of $Q$ and all vertices of $Q$ are 
midpoints of some faces of $\widehat P$. This is the key ingredient of 
the argument of this paper. This polytope $\widehat P$
will also be a zonotope and it will have a combinatorial
structure that is identical to that of $Q$.

Consider the stacky fan ${\bf\Sigma}_1$ 
in $N$ given by the rays
$$(t_1,t_2,\ldots,t_n) = (v_1-v_n,v_2-v_1,\ldots, v_n-v_{n-1}).$$
It will be convenient for us to consider our subscripts to
be elements of $\ZZ/n\ZZ$, so that we can write the
above equation simply as $t_i=v_{i}-v_{i-1}$.
Note that the convexity of $\Delta$ assures that $t_i$ are 
counted clockwise, although we can no longer assume that
they are vertices of a convex polytope. Consider a 
collection of positive numbers $\phi_i$ such that 
$\frac 1{\phi_i} t_i$ are vertices of a convex polytope.
By scaling $\phi_i$ we may arrange so that 
\begin{equation}\label{sum1}
\sum_{i=1}^n \phi_i=1.
\end{equation}

\begin{remark}
There is a fairly natural choice of $\phi$ given by 
$\phi_i=v_i\wedge v_{i-1}$ for some area form
$N_\RR\wedge N_\RR\to \RR$. This corresponds to
considering the dual polytope $\Delta^*$ and placing
it in $N$ via the identification of $N$ and $N^*$ via
the above form. It can consequently be scaled to get
$\sum_{i=1}^n \phi_i=1$. On the other hand, the
arguments of the paper go through for any convex
$\phi$, even if some $\phi_i$ are negative.
\end{remark}

\begin{definition}\label{defhatP}
We define the zonotope $\widehat P$ in 
$\widehat {\rm Pic}_\RR(\PP_{\bf\Sigma})$
which is the Minkowski sum of segments
$[\widehat t_i,-\widehat t_i]$ where 
$\widehat t_i$ are given
by 
$$
\widehat t_{i+1}-\widehat t_{i} = \widehat E_i,~{\rm for~all~} i\in 
\ZZ/n\ZZ
$$
and 
$$
\sum_{i=1}^n \phi_i\widehat t_i=0.
$$
\end{definition}

\begin{remark}
It is easy to see that Definition \ref{defhatP} determines
$\widehat t_i$ and hence $\widehat P$ uniquely. 
Indeed, the first set of equations can be solved because
$\sum_{i=1}^n \widehat E_i=0$. It 
determines $\widehat t_i$
uniquely up to an addition of an element of $\widehat {\rm Pic}_\RR(\PP_{\bf\Sigma})$ and then the last relation
removes the remaining ambiguity uniquely in view of 
\eqref{sum1}. Specifically, we get 
$$
\widehat t_i = \frac 1n\Big( \sum_{j=0}^{n-1} j\widehat E_{i+j} 
-\sum_{k=1}^n\sum_{j=0}^{n-1} j \phi_k \widehat E_{k+j}\Big)
$$
but we will not need this form of the solution. 
\end{remark}

We can explicitly describe the face structure of $\widehat P$.
\begin{proposition}\label{facesofP}
Faces of $\widehat P$ correspond to ordered pairs of disjoint subsets 
$I$ and $J$ of $\ZZ/n\ZZ$, such that the relative interiors of 
the convex 
hulls ${\rm conv}(\{v_i,i\in I\})$ and ${\rm conv}(\{v_i,i\in J\})$
intersect. In particular, facets of $\widehat P$ are in one-to-one correspondence with ordered pairs of intersecting diagonals in $\Delta$. Specifically, for a pair $(I,J)$, the corresponding face
is given by 
$$\theta_{I,J}
=\sum_{i\in I} \widehat t_i
-\sum_{i\in J}\widehat t_i + \sum_{i\not\in I\cup J} [-\widehat t_i,
\widehat t_i].
$$
\end{proposition}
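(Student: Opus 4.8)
The plan is to mimic the proof of Proposition \ref{faceQ} almost verbatim, since $\widehat P$ is again a zonotope and the only change is the replacement of the generating segments $[0,\widehat E_i]$ of $Q$ by the centrally symmetric segments $[-\widehat t_i,\widehat t_i]$ of $\widehat P$. First I would recall the general principle that the maximum set of a linear function $f$ on a Minkowski sum of polytopes is the Minkowski sum of the maximum sets on the individual summands; applied to $\widehat P=\sum_{i}[-\widehat t_i,\widehat t_i]$, the maximum set of $f$ on the segment $[-\widehat t_i,\widehat t_i]$ is $\{\widehat t_i\}$ if $f(\widehat t_i)>0$, is $\{-\widehat t_i\}$ if $f(\widehat t_i)<0$, and is the whole segment $[-\widehat t_i,\widehat t_i]$ if $f(\widehat t_i)=0$. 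Thus a face of $\widehat P$ is cut out by specifying, for each index, whether $f(\widehat t_i)$ is positive, negative, or zero, and the face has the asserted shape $\sum_{i\in I}\widehat t_i-\sum_{i\in J}\widehat t_i+\sum_{i\notin I\cup J}[-\widehat t_i,\widehat t_i]$ with $I=\{i:f(\widehat t_i)>0\}$ and $J=\{i:f(\widehat t_i)<0\}$.

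The substantive point is to translate the inequalities on $f(\widehat t_i)$ into the combinatorial condition on $I$ and $J$. Here I would use the relation $\widehat t_{i+1}-\widehat t_i=\widehat E_i$ from Definition \ref{defhatP}, which shows that a linear function $f$ on $\widehat{\rm Pic}_\RR(\PP_{\bf\Sigma})$ is determined by the values $f(\widehat t_i)$, and conversely the values $r_i:=f(\widehat E_i)=f(\widehat t_{i+1})-f(\widehat t_i)$ are exactly the data describing a linear functional on $\widehat{\rm Pic}_\RR(\PP_{\bf\Sigma})$, i.e. they satisfy $\sum_i r_i=0$ and $\sum_i r_i v_i=0$ (as spelled out in the proof of Proposition \ref{QandForb}). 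So the question "which subsets $I,J$ arise" for $\widehat P$ is governed by the same constraint on the $r_i$'s as for $Q$ — except that now $I$ records where $f$ is \emph{increasing along the $\widehat t$'s by a positive increment}, not where $f(\widehat E_i)>0$. I would therefore argue that choosing the values $a_i:=f(\widehat t_i)$ freely subject only to $\sum_i \phi_i a_i=0$ (the normalization in Definition \ref{defhatP}) is possible precisely when... and this is where one must be a bit careful: the honest statement is that $I=\{i: a_i>0 \text{ and } a_{i-1}\le a_i... \}$ is not literally $\{i:r_i>0\}$. The cleanest route is to observe that $\widehat P$ and $Q$ have \emph{identical normal fans} — this was already announced in the paragraph preceding Definition \ref{defhatP} ("it will have a combinatorial structure that is identical to that of $Q$") — because both are zonotopes whose generating segments $[-\widehat t_i,\widehat t_i]$ and $[0,\widehat E_i]$ are, respectively, translates of $[-\tfrac12\widehat E_i... ]$ no; rather, the normal fan of a zonotope $\sum [a_i,b_i]$ depends only on the directions $b_i-a_i$, and the direction of $[-\widehat t_i,\widehat t_i]$ is $\widehat t_i$...

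Let me restate the plan cleanly. The key lemma I would isolate and prove first is: \emph{the normal fan of $\widehat P$ equals the normal fan of $Q$.} Since the normal fan of a zonotope $\sum_k [c_k,d_k]$ depends only on the set of directions $\{d_k-c_k\}$ up to positive scaling, and since the difference vectors $2\widehat t_i$ need not be parallel to $\widehat E_i$, this is \emph{not} immediate from the shape of the generators; instead I would prove it by exhibiting the common refinement directly, or — better — by the following trick: the condition in Proposition \ref{faceQ} for a pair $(I,J)$ to index a face of $Q$ was shown to be equivalent to the existence of a linear functional $f$ on $\widehat{\rm Pic}_\RR(\PP_{\bf\Sigma})$ with $f(\widehat E_i)>0$ for $i\in I$, $<0$ for $i\in J$, $=0$ otherwise; I would show the face of $\widehat P$ cut out by that \emph{same} $f$ is $\theta_{I,J}$ as claimed, using only that $f(\widehat t_i)$ is a monotone "partial sum" of the $f(\widehat E_i)$ — more precisely, that for a pair $(I,J)$ arising from intersecting convex hulls (equivalently, by Remark \ref{noshape}, from indices $i_1,j_1,i_2,j_2$ clockwise with $i_1,i_2\in I$, $j_1,j_2\in J$) one can choose the supporting functional so that $f(\widehat t_i)$ has the correct sign pattern. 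The main obstacle, and the step I would spend the most care on, is exactly this verification that the sign pattern of $f$ on the $\widehat E_i$'s can be upgraded to the \emph{same} combinatorial data on the $\widehat t_i$'s; once that is in hand, the Minkowski-sum-of-maximum-sets computation gives the formula for $\theta_{I,J}$ and the "in particular" clause about facets and intersecting diagonals follows word for word from the proof of Proposition \ref{faceQ} together with Remark \ref{noshape}.
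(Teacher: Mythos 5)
Your opening move is right: $\widehat P$ is a zonotope, so a face is the Minkowski sum of the maximizing sub-faces of the generating segments, and hence is cut out by a sign pattern on the values $a_i:=f(\widehat t_i)$, giving the formula $\theta_{I,J}=\sum_{i\in I}\widehat t_i-\sum_{i\in J}\widehat t_i+\sum_{i\notin I\cup J}[-\widehat t_i,\widehat t_i]$ with $I=\{i:a_i>0\}$, $J=\{i:a_i<0\}$. The gap is in the next step, and it is the one you yourself flag as ``the main obstacle.''

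Your proposed key lemma, that the normal fan of $\widehat P$ equals the normal fan of $Q$, is false. The normal fan of a zonotope is the fan induced by the hyperplane arrangement normal to its generating segments. For $Q$ this is the arrangement $\{f:f(\widehat E_i)=0\}_i$, while for $\widehat P$ it is $\{f:f(\widehat t_i)=0\}_i$; since $\widehat E_i=\widehat t_{i+1}-\widehat t_i$ is not parallel to any $\widehat t_j$ in general, these are genuinely different arrangements and the fans differ as subsets of the dual space. (They are \emph{combinatorially isomorphic}, which is what the paper's phrase ``identical combinatorial structure'' means, but that is a weaker assertion and is precisely the thing to be proved, not an input.) The alternative you sketch---taking the functional $f$ that supports $\theta_{I,J}$ on $Q$, i.e.\ with prescribed signs of $f(\widehat E_i)=a_{i+1}-a_i$, and hoping the \emph{same} $f$ supports the corresponding face of $\widehat P$---fails for the same reason: prescribing the signs of the consecutive differences $a_{i+1}-a_i$ does not prescribe the signs of the $a_i$ themselves, so $\{i:a_i>0\}$ need not equal $\{i:a_{i+1}-a_i>0\}$.

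What the paper actually does at this point is bypass the $\widehat E_i$'s entirely and work directly with the constraints on the tuple $(a_i)=(f(\widehat t_i))$. From Definition \ref{defhatP}, a tuple $(a_i)$ arises from a linear functional $f$ on $\widehat{\rm Pic}_\RR(\PP_{\bf\Sigma})$ if and only if $\sum_i\phi_i a_i=0$ and $\sum_i a_i t_i=0$. Rewriting the second as $\sum_i(\phi_i a_i)\bigl(\tfrac{1}{\phi_i}t_i\bigr)=0$, one recognizes these as exactly the type of constraints treated in Proposition \ref{QandForb} (equations \eqref{rs}--\eqref{bary}), but now for the point configuration $\tfrac{1}{\phi_i}t_i$ rather than $v_i$. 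Since the $\tfrac{1}{\phi_i}t_i$ are, by the choice of $\phi$, vertices of a convex polygon listed clockwise, the achievable sign patterns of $(\phi_i a_i)$---equivalently of $(a_i)$, as $\phi_i>0$---are characterized by the intersecting-convex-hull condition for that polygon. Remark \ref{noshape} then says this combinatorial condition is the same for any convex polygon with vertices in clockwise order, so one may replace $\tfrac{1}{\phi_i}t_i$ by $v_i$ and obtain the statement of the proposition. That substitution of point configurations, not equality of normal fans, is the missing ingredient in your plan.
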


\begin{proof}
Consider a supporting function $f$ with $r_i=f(\widehat t_i)$. 
In view of the definition of $\widehat t_i$,
these $r_i$ satisfy a three-dimensional space of linear relations
$$
\sum_{i=1}^n \phi_i r_i = 0
$$
and
$$
\sum_{i=1}^n (w\cdot v_i) (r_{i+1}-r_i) = 0
$$
for all $w\in N^*$. The latter relation can be rewritten 
as 
$$
\sum_{i=1}^n r_i t_i = 0.
$$
This can be then thought of as a linear relation 
\begin{equation}\label{phph}
\sum_{i=1}^n (\phi_i r_i) (\frac 1{\phi_i}t_i) = 0
\end{equation}
on points $\frac 1{\phi_i}t_i$ in $N_\RR$ with 
$\sum_{i} \phi_i r_i = 0$. If $I$ is the set of $i$
with $r_i>0$  (and hence $\phi_ir_i>0$)
and $J$ is the set of $i$ with $r_i<0$,
then, similarly to \eqref{bary}, we can see \eqref{phph}
as a statement that the relative interiors of 
the convex hulls of $\frac 1{\phi_i}t_i,i\in I$ and 
$\frac 1{\phi_i}t_i,i\in J$ intersect. In view of Remark \ref{noshape}
we can replace $\frac 1{\phi_i}t_i$ by $v_i$.

The calculation of the maximum set of $f$ on $\widehat P$ 
is then straightforward.
The statement about facets is also clear.
\end{proof}

\begin{remark}
A reader familiar with Gale duality will notice that 
the proofs of Propositions \ref{faceQ} and \ref{facesofP}
can be stated naturally in its terms, since the facet structure
of a zonotope is encoded by the linear combinations 
in the Gale dual picture. However, we preferred to give 
a direct argument to avoid introducing additional terminology.
\end{remark}

The main properties of $\widehat P$ are summarized in the following
proposition.
\begin{proposition}\label{allabouthatP}
The polytope $\widehat P$ is centrally symmetric.
All vertices of $Q$ are midpoints of some faces of $\widehat P$. 
A vertex $\widehat E_I$ of $Q$ is a midpoint of a facet of 
$\widehat P$ if and only if $C_I$ has exactly two connected components. The midpoint of every facet of $\widehat P$ is 
a vertex of $Q$.
\end{proposition}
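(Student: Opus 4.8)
The plan is to exploit the fact, established in Propositions \ref{faceQ} and \ref{facesofP}, that the faces of $Q$ and the faces of $\widehat P$ are indexed by the \emph{same} combinatorial data --- ordered pairs $(I,J)$ of disjoint subsets of $\ZZ/n\ZZ$ for which the relative interiors of ${\rm conv}(\{v_i,i\in I\})$ and ${\rm conv}(\{v_i,i\in J\})$ meet --- and to compare midpoints directly through this dictionary. Central symmetry of $\widehat P$ needs no work: it is the Minkowski sum of the origin-symmetric segments $[\widehat t_i,-\widehat t_i]$, so $\widehat P=-\widehat P$.

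Next I would record three elementary facts. First, since each segment $[-\widehat t_i,\widehat t_i]$ has midpoint $0$, the midpoint of the face $\theta_{I,J}$ of $\widehat P$ described in Proposition \ref{facesofP} is $\sum_{i\in I}\widehat t_i-\sum_{i\in J}\widehat t_i$. Second, telescoping $\widehat t_{i+1}-\widehat t_i=\widehat E_i$ gives $\sum_{i\in[p,q]}\widehat E_i=\widehat t_{q+1}-\widehat t_p$ for any cyclic interval $[p,q]=\{p,p+1,\ldots,q\}\subseteq\ZZ/n\ZZ$. Third, because $\Sigma$ is the fan over the convex polygon $\Delta$ its cones are the rays $\RR_{\geq 0}v_i$ and the two-dimensional cones $\RR_{\geq 0}v_i+\RR_{\geq 0}v_{i+1}$, so $C_I$ is just the induced subgraph of the $n$-cycle on the vertex set $I$; hence the connected components of $C_I$ are the maximal runs of consecutive indices (``arcs'') of $I$, and by Proposition \ref{QandForb} the point $\widehat E_I$ is a vertex of $Q$ exactly when $I$ splits into $m\geq 2$ arcs. (For a zonotope the subset labelling a vertex is moreover unique, so ``the vertex $\widehat E_I$'' refers unambiguously to $I$.)

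With these in hand the main argument runs as follows. Given a vertex $\widehat E_I$ of $Q$, write $I=\bigsqcup_{l=1}^m[p_l,q_l]$ with $m\geq 2$ and all arcs and gaps nonempty; the second fact gives $\widehat E_I=\sum_{l=1}^m\widehat t_{q_l+1}-\sum_{l=1}^m\widehat t_{p_l}$, which by the first fact is the midpoint of $\theta_{I',J'}$ with $I'=\{q_l+1:1\leq l\leq m\}$ and $J'=\{p_l:1\leq l\leq m\}$. Here $I'\subseteq\bar I$ and $J'\subseteq I$ are disjoint, and reading off the cyclic order $p_1,q_1+1,p_2,q_2+1,\ldots,p_m,q_m+1$ exhibits the interleaving that, by Remark \ref{noshape}, makes $(I',J')$ a legitimate face label. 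This proves the second statement; since $|I'|=|J'|=m$, the face is a facet precisely when $m=2$, i.e.\ when $C_I$ has exactly two components --- the ``if'' half of the third statement. For the remaining implications, take any facet of $\widehat P$: by Proposition \ref{facesofP}, together with the fact that no vertex of $\Delta$ lies in the relative interior of the convex hull of the others, it has the form $\theta_{\{a,a'\},\{b,b'\}}$ for a crossing pair of diagonals, say with $a,b,a',b'$ in clockwise order. Partitioning $\ZZ/n\ZZ$ into the four arcs $[a,b-1],[b,a'-1],[a',b'-1],[b',a-1]$ and summing the relevant $\widehat E_i$, I would rewrite its midpoint $\widehat t_a+\widehat t_{a'}-\widehat t_b-\widehat t_{b'}$ as $\widehat E_K$ with $K=[b,a'-1]\cup[b',a-1]$; since $a,b,a',b'$ are distinct all four arcs are nonempty, so $C_K$ has exactly two components and $\widehat E_K$ is a vertex of $Q$ by Proposition \ref{QandForb}. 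Combined with the uniqueness remark above, this gives the fourth statement and the ``only if'' half of the third.

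The substance of the proof is not any single step but the simultaneous bookkeeping: one must keep in sync the arc/gap decomposition of a subset of the $n$-cycle, the translation of ``$C_I$ has $k$ components'' into that decomposition, the clockwise-interleaving criterion of Remark \ref{noshape}, and the signs in $\widehat t_{q+1}-\widehat t_p=\sum_{i\in[p,q]}\widehat E_i$ together with the global relation $\sum_i\widehat E_i=0$. The delicate point I expect to spend the most care on is making the nonemptiness of the various arcs and gaps match up exactly with ``exactly two components'' versus ``at least two components''.
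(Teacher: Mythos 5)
Your proposal is correct and follows essentially the same route as the paper's proof: the telescoping identity $\widehat E_{[i,j)}=\widehat t_j-\widehat t_i$, the decomposition of $I$ into $l\geq 2$ cyclic intervals, the identification of $\widehat E_I$ with the midpoint of the face $\theta_{I',J'}$ of Proposition \ref{facesofP}, and the facet criterion $l=2$. You are merely more explicit than the paper about the converse (reversing the procedure for a facet labelled by crossing diagonals) and about the uniqueness of the labelling, which the paper compresses into the sentence that all facets are obtained by this procedure.
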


\begin{proof}
Denote by $[i,j)$ the set of indices in $\ZZ/n\ZZ$ starting
from $i$ (included) and ending with $j$ (excluded), counted 
clockwise. Then 
\begin{equation}\label{tele}
\widehat E_{[i,j)} = \widehat t_j - \widehat t_i.
\end{equation}

Every proper subset $I$ of $\ZZ/n\ZZ$ such that simplicial 
complex $C_I$ has nontrivial reduced homology can
be uniquely written as a disjoint union of $l$ intervals
$[i_k,j_k)$, $k=1,\ldots,l$ with $l\geq 2$. Equation 
\eqref{tele} then gives
$$
\widehat E_I =
\sum_{k=1}^l\widehat t_{j_k}-
 \sum_{k=1}^l \widehat t_{i_k},
$$
which is the midpoint of a face of $\widehat P$ by
Proposition \ref{facesofP}. In particular, this face is a facet
of $\widehat P$ if and only if $l=2$, which is equivalent 
to $C_I$ having two connected components. Finally, all 
facets of $\widehat P$ are obtained by this procedure.
\end{proof}

\begin{corollary}\label{Pacyclic}
The interior of $\widehat P$ lies outside of the images of 
the forbidden cones $F_I$ for all proper  
subsets $I$ of $\{1,\ldots,n\}$ with non-acyclic $C_I$.
\end{corollary}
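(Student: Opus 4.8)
The plan is to reduce the statement to the geometry established in Proposition \ref{allabouthatP}, working downstairs in $\widehat{\rm Pic}_\RR(\PP_{\bf\Sigma})$. First I would recall that the image of a forbidden cone $F_I$ under the projection $\pi\colon {\rm Pic}_\RR(\PP_{\bf\Sigma})\to\widehat{\rm Pic}_\RR(\PP_{\bf\Sigma})$ is, by the proposition preceding Proposition \ref{faceQ}, the cone $\widehat E_I - \RR_{\geq 0}(Q-\widehat E_I)$, i.e. the reflection through $\widehat E_I$ of the tangent cone of $Q$ at the vertex $\widehat E_I$. (Here $\widehat E_I$ need not be a vertex of $Q$ — for $C_I$ with several connected components it is, by Proposition \ref{QandForb}, but in dimension two every non-acyclic $C_I$ has at least two components, so in fact every relevant $\widehat E_I$ \emph{is} a vertex of $Q$; I would state this reduction explicitly, citing Proposition \ref{QandForb}.) So it suffices to show that for every vertex $v=\widehat E_I$ of $Q$, the reflected tangent cone $v - \RR_{\geq 0}(Q-v)$ meets the interior of $\widehat P$ in the empty set.

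The key geometric input is that $Q$ and $\widehat P$ are both zonotopes with the \emph{same} combinatorial face structure (Propositions \ref{faceQ} and \ref{facesofP}), and that by Proposition \ref{allabouthatP} every vertex $v=\widehat E_I$ of $Q$ is the midpoint of a face $\theta_{I,J}$ of $\widehat P$ — a facet exactly when $C_I$ has two components. The argument I would give: a point $x$ lies in $v-\RR_{\geq 0}(Q-v)$ iff $2v-x\in v+\RR_{\geq 0}(Q-v)$, i.e. iff the ray from $v$ through $2v-x$ enters $Q$; since $v$ is a vertex, this holds iff $2v-x$ lies in the tangent cone, which is cut out by the supporting hyperplanes of $Q$ at $v$. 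Each such supporting hyperplane of $Q$ at $v$ corresponds, via the common combinatorics, to a supporting hyperplane of $\widehat P$ along the face $\theta_{I,J}$ whose midpoint is $v$; because $v$ is the midpoint of that face, the reflection $x\mapsto 2v-x$ carries the closed half-space of $\widehat P$ bounded by that hyperplane and containing $\widehat P$ onto the \emph{opposite} closed half-space. Intersecting over all the supporting hyperplanes at $v$, one concludes that $x\in v-\RR_{\geq 0}(Q-v)$ forces $x$ to lie, for at least one of these hyperplanes $H$, on the far side of $H$ from $\widehat P$ — hence outside the interior of $\widehat P$. I would make the normalization precise by picking, for each supporting linear functional $f$ of $Q$ at $v$ (so $f$ is maximized on $Q$ exactly at $v$), the same $f$ viewed as a functional on $\widehat P$: by Proposition \ref{facesofP} its maximum on $\widehat P$ is attained on $\theta_{I,J}$, and $f(v)=\tfrac12(\max_{\widehat P} f + \min_{\widehat P} f)$ since $v$ is the midpoint of $\theta_{I,J}$ and $\widehat P$ is centrally symmetric about $0$; then $f(2v-x)=2f(v)-f(x)$, and $f(x)>\min_{\widehat P}f$ (the condition for $x$ to possibly be in the interior of $\widehat P$) gives $f(2v-x)<\max_{\widehat P}f\le\max_Q f=f(v)$, so $2v-x$ is strictly inside the tangent cone in that direction — and running this for \emph{all} supporting $f$ at $v$ simultaneously shows $2v-x$ is in the interior of the tangent cone of $Q$ at $v$, which I would then note is impossible in a direction that keeps $2v-x$ on a bounded polytope unless... — more cleanly, I would simply argue the contrapositive: if $x\in\operatorname{int}\widehat P$ then for every supporting functional $f$ of $Q$ at $v$ we get $f(x)>\min_{\widehat P}f\ge 2f(v)-\max_Q f$, i.e. $f(2v-x)<\max_Q f=f(v)$, so $2v-x$ lies strictly on the $v$-side of \emph{every} face of $Q$ through $v$, forcing $2v-x$ into the open tangent cone, whence $x\notin v-\RR_{\geq 0}(Q-v)$.

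The step I expect to be the main obstacle is pinning down the correspondence between the supporting hyperplanes of $Q$ at the vertex $v=\widehat E_I$ and the supporting hyperplanes of $\widehat P$ at the face $\theta_{I,J}$ with enough precision to transfer the half-space inequalities — in particular, handling the case where $C_I$ has more than two connected components, so that $v$ is a midpoint of a lower-dimensional face of $\widehat P$ rather than a facet, and there are several supporting functionals to track at once. Here the cleanest route is the one sketched above: rather than matching hyperplanes one at a time, observe that a \emph{single} linear functional $f$ supporting $Q$ at $v$ already yields the strict inequality $f(2v-x)<f(v)$ for $x\in\operatorname{int}\widehat P$, using only $\max_{\widehat P}f = -\min_{\widehat P}f$ (central symmetry of $\widehat P$ about $0$), $f(v)=\tfrac12(\max_{\widehat P}f+\min_{\widehat P}f)=0\le$ — wait, this needs $f(v)\ge 0$; since $f$ supports $Q$ at $v$ and $0\in Q$, indeed $f(v)=\max_Q f\ge f(0)=0$ — and $\max_{\widehat P}f\le\max_Q f$, which follows because the vertices of $\widehat P$ and $Q$ realize the same sign patterns of $f$ via Propositions \ref{faceQ}–\ref{facesofP}. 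Quantifying that last inequality $\max_{\widehat P}f\le\max_Q f$ (equivalently controlling the lengths of the generating segments $[-\widehat t_i,\widehat t_i]$ of $\widehat P$ against $[0,\widehat E_i]$ of $Q$) via the telescoping identity \eqref{tele}, $\widehat E_{[i,j)}=\widehat t_j-\widehat t_i$, is the one genuinely computational point, and I would dispatch it using \eqref{tele} together with $f(\widehat t_i)=r_i$ and the relation $\sum_i\phi_i r_i=0$ from the proof of Proposition \ref{facesofP}.
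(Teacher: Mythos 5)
Your reduction (via Proposition \ref{QandForb}) to vertices $\widehat E_I$ of $Q$, and the identification of the image of $F_I$ with $\widehat E_I-\RR_{\geq 0}(Q-\widehat E_I)$, are fine, but the argument you finally commit to in the ``cleaner'' contrapositive form is broken in two ways. First, the reflection logic is inverted: $x$ lies in $\widehat E_I-\RR_{\geq 0}(Q-\widehat E_I)$ exactly when $2\widehat E_I-x$ lies in the tangent cone $\widehat E_I+\RR_{\geq 0}(Q-\widehat E_I)$, i.e. exactly when $f(x)\geq f(\widehat E_I)$ for \emph{every} functional $f$ supporting $Q$ at $\widehat E_I$. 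So proving $f(2\widehat E_I-x)<f(\widehat E_I)$ for all supporting $f$, as you propose, would place $2\widehat E_I-x$ in the tangent cone and hence $x$ \emph{inside} the reflected forbidden cone --- the opposite of what is wanted. Second, the inequality you flag as the remaining computational point, $\max_{\widehat P}f\leq\max_Q f$, is backwards: by Proposition \ref{allabouthatP} the vertices of $Q$ are midpoints of faces of $\widehat P$, so $Q\subseteq\widehat P$ and therefore $\max_{\widehat P}f\geq\max_Q f$ for every $f$, with equality only when $f$ lies in the normal cone of $\widehat P$ along the face whose midpoint is $\widehat E_I$; no computation with \eqref{tele} can rescue it. Relatedly, your claimed correspondence sending each supporting hyperplane of $Q$ at the vertex $\widehat E_I$ to a supporting hyperplane of $\widehat P$ along that face goes the wrong way: the normal cone of $Q$ at a vertex is full-dimensional while that of $\widehat P$ along a positive-dimensional face is not, and only the reverse inclusion of normal cones holds.

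That reverse inclusion is all one needs, and it is the paper's one-line proof, which is in fact implicit in your middle paragraph before you ``clean it up'': to keep a point $x$ of the interior of $\widehat P$ out of the reflected cone it suffices to exhibit a \emph{single} supporting functional of $Q$ at $\widehat E_I$ with $f(x)<f(\widehat E_I)$. Take any $f$ supporting $\widehat P$ at $\widehat E_I$; it exists because $\widehat E_I$, being the midpoint of a proper face of $\widehat P$, lies on the boundary of $\widehat P$. Since $Q\subseteq\widehat P$ and $\widehat E_I\in Q$, this $f$ also supports $Q$ at $\widehat E_I$, so every point $y$ of $\widehat E_I-\RR_{\geq 0}(Q-\widehat E_I)$ satisfies $f(y)\geq f(\widehat E_I)=\max_{\widehat P}f$ and hence avoids the interior of $\widehat P$. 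No matching of all supporting hyperplanes, no case analysis on the number of components of $C_I$, and no comparison of $\max_{\widehat P}f$ with $\max_Q f$ beyond the containment $Q\subseteq\widehat P$ is required.
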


\begin{proof}
For each forbidden cone $F_I$
consider the corresponding point $\widehat E_I$ on 
the boundary of $\widehat P$. 
By Proposition \ref{allabouthatP} the polytope $Q$ 
is contained in $\widehat P$, so any supporting hyperplane
of $\widehat E_I$ for $\widehat P$ is also a supporting
hyperplane for it for $Q$. It remains to observe that the
image of the forbidden cone for $Q$ lies on the side
of this hyperplane away from $Q$ and hence 
away from the interior 
of $\widehat P$ by Proposition \ref{QandForb}.
\end{proof}

\begin{proposition}\label{movinglemma}
Let $I=[i_1,j_1)\sqcup [i_2,j_2)$ with $i_1,j_1,i_2,j_2$ 
indexed clockwise and let 
$$\theta_I = \widehat t_{j_1} + \widehat t_{j_2} 
-\widehat t_{i_1} - \widehat t_{i_2}
+\sum_{k\neq i_1,i_2,j_1,j_2}[-\widehat t_k,\widehat t_k]
$$
be the facet of $\widehat P$ that contains $\widehat E_I$
as a midpoint. Then the shifts of the relative interiors
of $\theta_I$ by $-2\widehat E_{[i_1,j_1)}$, 
$-2\widehat E_{[i_2,j_2)}$, $2\widehat E_{[j_1,i_2)}$
and $2\widehat E_{[j_2,i_1)}$ are contained in the interior
of $\widehat P$. In addition, the shift of the relative
interior of $\theta_I$ by $-2\widehat E_I=2\widehat E_{\bar I}$
is the opposite face $-\theta_I=\theta_{\bar I}$ of $\widehat P$. 
\end{proposition}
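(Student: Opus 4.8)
I would work throughout with support functions of the zonotope $\widehat P$, which is full-dimensional since the $\widehat t_k$ span $\widehat{\rm Pic}_\RR(\PP_{\bf\Sigma})$. Thus a point $p$ lies in the interior of $\widehat P$ precisely when $u(p)<h(u):=\sum_{k\in\ZZ/n\ZZ}|u(\widehat t_k)|$ for every nonzero linear functional $u$ on $\widehat{\rm Pic}_\RR(\PP_{\bf\Sigma})$. By Proposition \ref{facesofP} together with Remark \ref{noshape}, for every such $u$ the sign sets $I^u_+=\{k:u(\widehat t_k)>0\}$ and $I^u_-=\{k:u(\widehat t_k)<0\}$ are nonempty and one may choose $a_1,a_2\in I^u_+$ and $b_1,b_2\in I^u_-$ occurring, in clockwise order around $\{1,\ldots,n\}$, as $a_1,b_1,a_2,b_2$. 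I would also record that ${\rm relint}\,\theta_I$ consists of the points $\widehat t_{j_1}+\widehat t_{j_2}-\widehat t_{i_1}-\widehat t_{i_2}+\sum_{k\notin\{i_1,i_2,j_1,j_2\}}s_k\widehat t_k$ with all $s_k\in(-1,1)$; this follows from the formula for $\theta_I$ and the linear independence of the $n-4$ generators $\widehat t_k$, $k\notin\{i_1,i_2,j_1,j_2\}$, which holds because a facet of the $(n-3)$-dimensional zonotope $\widehat P$ has dimension $n-4$.

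I would handle the statement about $-2\widehat E_I$ first, as it is essentially formal. Since $\widehat E_I=\widehat t_{j_1}+\widehat t_{j_2}-\widehat t_{i_1}-\widehat t_{i_2}$ is the midpoint of $\theta_I$ and each segment $[-\widehat t_k,\widehat t_k]$ is centrally symmetric, the translate $\theta_I-2\widehat E_I$ equals $-\theta_I$. By Proposition \ref{facesofP}, $-\theta_I$ is the facet attached to the ordered pair $(\{i_1,i_2\},\{j_1,j_2\})$ — a genuine facet, since the diagonals $A_{i_1}A_{i_2}$ and $A_{j_1}A_{j_2}$ cross given the clockwise order $i_1,j_1,i_2,j_2$ — and its midpoint is $\widehat t_{i_1}+\widehat t_{i_2}-\widehat t_{j_1}-\widehat t_{j_2}=\widehat E_{[j_1,i_2)}+\widehat E_{[j_2,i_1)}=\widehat E_{\bar I}$. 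Hence $-\theta_I=\theta_{\bar I}$, and since the shift is a homeomorphism the relative interiors match up.

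For the four remaining shifts I would note, using $\widehat E_{[c,d)}=\widehat t_d-\widehat t_c$, that each shift vector equals $2(\widehat t_a-\widehat t_b)$ with $a\in\{i_1,i_2\}$, $b\in\{j_1,j_2\}$, and that adding it to a point of ${\rm relint}\,\theta_I$ changes the $\widehat t_a$-coefficient from $-1$ to $+1$, the $\widehat t_b$-coefficient from $+1$ to $-1$, and nothing else. Take for instance the shift by $-2\widehat E_{[i_1,j_1)}=2(\widehat t_{i_1}-\widehat t_{j_1})$: a point of the shifted set is $p=-\widehat t_{j_1}+\widehat t_{j_2}+\widehat t_{i_1}-\widehat t_{i_2}+\sum_{k\notin\{i_1,i_2,j_1,j_2\}}s_k\widehat t_k$ with $s_k\in(-1,1)$, and for every $u\neq0$
\begin{multline*}
h(u)-u(p)=\bigl(|u(\widehat t_{j_1})|+u(\widehat t_{j_1})\bigr)+\bigl(|u(\widehat t_{j_2})|-u(\widehat t_{j_2})\bigr)+\bigl(|u(\widehat t_{i_1})|-u(\widehat t_{i_1})\bigr)\\
+\bigl(|u(\widehat t_{i_2})|+u(\widehat t_{i_2})\bigr)+\sum_{k\notin\{i_1,i_2,j_1,j_2\}}\bigl(|u(\widehat t_k)|-s_k\,u(\widehat t_k)\bigr),
\end{multline*}
a sum of nonnegative terms. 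Were it zero, then $u(\widehat t_k)=0$ for all $k\notin\{i_1,i_2,j_1,j_2\}$ (as $|s_k|<1$) and $I^u_+\subseteq\{i_1,j_2\}$, $I^u_-\subseteq\{j_1,i_2\}$; but then the interleaving quadruple of the first paragraph would force $\{i_1,j_2\}$ and $\{j_1,i_2\}$ to alternate around $\Delta$, contradicting the clockwise order $i_1,j_1,i_2,j_2$ (equivalently, the chords $A_{i_1}A_{j_2}$ and $A_{j_1}A_{i_2}$ would cross, whereas in fact $j_1$ and $i_2$ lie on the same arc cut off by $A_{i_1}A_{j_2}$). Hence $h(u)-u(p)>0$ for all $u\neq0$, so $p\in{\rm int}\,\widehat P$. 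The remaining three shifts are identical bookkeeping: $-2\widehat E_{[i_2,j_2)}$ again runs into the non-crossing pair $\{A_{i_1}A_{j_2},A_{j_1}A_{i_2}\}$, while $2\widehat E_{[j_1,i_2)}$ and $2\widehat E_{[j_2,i_1)}$ run into $\{A_{i_1}A_{j_1},A_{i_2}A_{j_2}\}$; in each case the two chords join cyclically adjacent vertices among $i_1,j_1,i_2,j_2$ and hence do not cross. (One could also deduce the last three from the first via the central symmetry of $\widehat P$.)

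The only genuinely non-formal ingredient is Proposition \ref{facesofP}, which converts the sign pattern of a functional $u$ into a crossing condition on diagonals of $\Delta$, and it is already in hand. Granting it, the step that needs care is the combinatorial bookkeeping: matching each of the five shift vectors to the sign pattern it produces on $(\widehat t_{i_1},\widehat t_{j_1},\widehat t_{i_2},\widehat t_{j_2})$, so that the four ``interior'' shifts land on the two pairs of opposite sides of the cyclic quadrilateral $i_1 j_1 i_2 j_2$ (non-crossing) while $-2\widehat E_I$ lands on its pair of diagonals (crossing). The subsidiary claim that ${\rm relint}\,\theta_I$ is cut out by the open conditions $s_k\in(-1,1)$ deserves a sentence of its own, but it is routine.
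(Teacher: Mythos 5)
Your argument is correct and follows essentially the same route as the paper's: both observe that the translated point has all $\widehat t_k$-coefficients in $[-1,1]$ so lies in $\widehat P$, and both show that a supporting functional at such a point would have to vanish on $\widehat t_k$ for $k\notin\{i_1,i_2,j_1,j_2\}$ and carry a sign pattern on the remaining four $\widehat t_k$ that, via Proposition \ref{facesofP} and Remark \ref{noshape}, would force two non-crossing chords of $\Delta$ to cross. The only cosmetic difference is that you package the boundary test with the explicit support function $h(u)=\sum_k|u(\widehat t_k)|$ while the paper argues by perturbing $p$ along the free $\widehat t_k$-directions; you also choose to detail the shift by $-2\widehat E_{[i_1,j_1)}$ rather than the paper's $2\widehat E_{[j_2,i_1)}$ (note the paper has a sign typo there, writing $-2\widehat E_{[j_2,i_1)}$ for what its formula shows is $+2\widehat E_{[j_2,i_1)}$).
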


\begin{proof}
We will prove the last of the four statements. The proof of 
the other three is completely analogous and is left to the reader.
In view of the equation \eqref{tele}, the shift of the relative interior of 
$\theta_I$ by $-2\widehat E_{[j_2,i_1)}$ is given 
by 
$$
\widehat t_{i_1} -  \widehat  t_{j_2} + \widehat t_{j_1} 
- \widehat t_{i_2}+
\sum_{k\neq i_1,i_2,j_1,j_2}(-\widehat t_k,\widehat t_k).
$$
Every point $p$ of this set clearly lies in $\widehat P$, so it remains to show that it does not lie on the  boundary of $\widehat P$. Assume
the contrary. For any  index $k\neq i_1,i_2,j_1,j_2$ the point 
$p$ can be moved by $\epsilon \widehat t_k$ for small $\vert \epsilon\vert$, so that the result is still in $\widehat P$.
Consequently, any supporting hyperplane at $p$ 
should be given by a linear function $f$ with $f(\widehat t_k)=0$
for $k\neq i_1,i_2,j_1,j_2$. This leads  to a
statement that the interiors of the segments 
$[\frac 1{\phi_{i_1}}t_{i_1},\frac 1{\phi_{j_1}}t_{j_1}]$
and $[\frac 1{\phi_{i_2}}t_{i_2},\frac 1{\phi_{j_2}}t_{j_2}]$  intersect,
which is false. 

The statement about the shift by $-2\widehat E_I$ is an easy 
calculation which we leave to the reader.
\end{proof}

\section{The case of del Pezzo toric stacks, the full strong exceptional collection}\label{sec.dP}
In this section we will prove Conjecture \ref{fsec.conj} for
toric del Pezzo Deligne-Mumford stacks.

We will be using the notations of the preceding section.
First, we will define a polytope $P$ in 
${\rm Pic}_\RR(\PP_{\bf\Sigma})$ as follows.
Fix a collection of positive numbers $r_i$ such that 
$\sum_{i=1}^n r_i=1$ and $\sum_{i=1}^n r_i v_i=0$.
This collection defines a linear function $f$ on 
${\rm Pic}_\RR(\PP_{\bf\Sigma})$ 
with $f(E_i)=r_i$.
\begin{definition}\label{P.def}
We define a convex polytope $P$ in 
${\rm Pic}_\RR(\PP_{\bf\Sigma})$
which consists of points of $x$ 
with 
$\vert f(x)\vert \leq \frac 12$
such that the image of $x$ in $\widehat
{\rm Pic}_\RR(\PP_{\bf\Sigma})$
lies in $\frac 12 \widehat P$.
\end{definition}

We pick a generic  $p\in {\rm Pic}_\RR(\PP_{\bf\Sigma})$.
As in Theorem \ref{Pic2} we consider the 
set $S$ of line bundles $\LL$ such that their
image in ${\rm Pic}_\RR(\PP_{\bf\Sigma})$ lies in
$P+p$.
\begin{proposition}\label{strongexc}
The set $S$ forms a strong exceptional collection.
\end{proposition}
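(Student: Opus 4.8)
The plan is to mimic the proof strategy of Theorem~\ref{Pic2} in the higher Picard rank setting, the key point being that a strong exceptional collection results as soon as the difference of any two interior points of $P+p$ avoids all forbidden cones, which by translation invariance is equivalent to the interior of $2P$ avoiding all forbidden cones. So the proof reduces to the following convex-geometric claim: \emph{no interior point of $2P$ lies in any forbidden cone $F_I$}.

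First I would recall that the forbidden cones are indexed by proper subsets $I\subset\{1,\dots,n\}$ with $C_I$ non-acyclic, and that such $I$ are (by the discussion preceding Proposition~\ref{allabouthatP}) precisely the proper subsets that break into $l\ge 2$cyclic intervals. The polytope $2P$ is cut out by $|f(x)|\le 1$ together with the condition that the image $\widehat x$ of $x$ in $\widehat{\rm Pic}_\RR(\PP_{\bf\Sigma})$ lie in $\widehat P$. Suppose for contradiction that some $x$ in the interior of $2P$ lies in $F_I$. Projecting to $\widehat{\rm Pic}_\RR(\PP_{\bf\Sigma})$, the image $\widehat x$ lies in the interior of $\widehat P$ (interior because $f$ is one of the coordinates we can still wiggle, or more carefully because the fibers of the projection are the $f$-lines and $|f(x)|<1$ strictly) and simultaneously in the image of $F_I$, which by the proposition following Corollary after Proposition~\ref{QandForb} equals $\widehat E_I-\RR_{\ge0}(Q-\widehat E_I)$. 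But Corollary~\ref{Pacyclic} states exactly that the interior of $\widehat P$ is disjoint from the images of all forbidden cones $F_I$ with non-acyclic $C_I$; this is the contradiction.

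The one subtlety to handle carefully is the boundary/interior bookkeeping: I want to conclude that $x$ in the \emph{interior} of $2P$ forces $\widehat x$ in the \emph{interior} of $\widehat P$, and that $x\in F_I$ forces $\widehat x$ in the image of $F_I$. The second is immediate since the projection is linear and $F_I$ maps onto its image. For the first, the point is that the projection ${\rm Pic}_\RR\to\widehat{\rm Pic}_\RR$ is an open map (a linear surjection), so it carries interior points to interior points of the image of $2P$; and the image of $2P$ under this projection is precisely $\widehat P$ since the $f$-constraint is in the kernel direction. Hence $\widehat x\in\mathrm{int}(\widehat P)$, and Corollary~\ref{Pacyclic} finishes it.

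The main obstacle is really already dispatched: it is the content of Section~\ref{sec.dP.prelim}, namely establishing (via the zonotope $\widehat P$ with its midpoint-of-facets property, Proposition~\ref{allabouthatP}) that $Q\subseteq\widehat P$ and hence that the supporting hyperplanes of $\widehat P$ at the vertices $\widehat E_I$ separate $\widehat P$ from the images of the forbidden cones. Given that machinery, the proof of Proposition~\ref{strongexc} is short: reduce to $2P$, project to $\widehat{\rm Pic}_\RR(\PP_{\bf\Sigma})$, and invoke Corollary~\ref{Pacyclic}. (The harder half of the whole construction, that $S$ is moreover \emph{full}, is a separate statement that will be proved afterwards using the Koszul-complex moving argument as in Theorem~\ref{Pic2}, now with the facet-midpoint data of Proposition~\ref{movinglemma} controlling how new line bundles enter the window.)
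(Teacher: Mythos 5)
There is a genuine gap: your argument never excludes the forbidden cone $F_\emptyset$. The forbidden subsets are \emph{not} only the proper subsets that split into $l\ge 2$ cyclic intervals; $I=\emptyset$ is also forbidden ($C_\emptyset=\{\emptyset\}$ has nontrivial reduced homology in degree $-1$, accounting for top cohomology), with forbidden point $q_\emptyset=-\sum_{i=1}^n E_i$ and cone $F_\emptyset=-\sum_i E_i-\sum_i\RR_{\ge0}E_i$. Your projection argument cannot handle this cone: since $\sum_i\widehat E_i=0$, the cone generated by the vectors $-\widehat E_i$ is all of $\widehat{\rm Pic}_\RR(\PP_{\bf\Sigma})$, so the image of $F_\emptyset$ is the whole space and certainly meets the interior of $\widehat P$. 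Correspondingly, Corollary \ref{Pacyclic} (as its proof shows, via Proposition \ref{QandForb}) applies only to nonempty proper $I$, for which $\widehat E_I$ is a vertex of $Q$ on the boundary of $\widehat P$; $\widehat E_\emptyset=0$ is the center of $\widehat P$, and invoking the Corollary for $I=\emptyset$ would be using a false instance of it. This is exactly where the other half of the definition of $P$ must be used: for $x$ in the interior of $2P$ one has $f(x)>-1=f(-\sum_i E_i)$, whereas every point of $F_\emptyset$ satisfies $f\le -1$ because $f(E_i)=r_i>0$; this $f$-inequality, not the projection, rules out $F_\emptyset$. The paper's proof is precisely this two-step argument (the $f$-bound for $I=\emptyset$, the projection plus Corollary \ref{Pacyclic} for all other $I$), and your proposal reproduces only the second step.

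A smaller bookkeeping point: you speak of differences of \emph{interior} points of $p+P$, but the line bundles of $S$ are only required to map into $p+P$; you should note (as the paper does) that genericity of $p$ guarantees no lattice points lie on the boundary of $p+P$, so that differences of elements of $S$ indeed land in the interior of $2P$, where the strict inequality $f>-1$ is available. With these two repairs your outline matches the paper's proof.
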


\begin{proof}
We simply need to check that the differences of any two
line bundles in $\LL$ lie outside of all forbidden cones.
Since $p$ is generic, $p+P$ has no lattice points on 
the boundary, consequently, the differences of line bundles
in $S$ map to the interior of $p+P+(-p-P)$. Because 
$P$ is centrally symmetric, these differences are then 
in the interior of $2P$. Points $x=\sum_{i=1}^n x_i E_i$
in the interior of $2P$ 
satisfy  $\sum_{i=1}r_ix_i > -1=f(-\sum_{i=1}^nE_i)$, which shows that they
lie outside the forbidden cone  for $I=\emptyset$. 
To show that the other forbidden cones $F_I$ do not intersect the 
interior of $2P$ consider their projections to 
$\widehat {\rm Pic}_\RR(\PP_{\bf\Sigma})$.
By Corollary \ref{Pacyclic}, 
they do not intersect the interior of $\widehat P$, 
which is precisely the projection of the interior of $2P$.
\end{proof}

We are now ready to prove the main result of this paper,
which is to show that $S$ is a full strong exceptional collection.
\begin{theorem} \label{delPezzo}
For a generic $p\in {\rm Pic}_\RR(\PP_{\bf\Sigma})$
the set $S$ of line bundles $\LL$ that map inside $P+p$ forms a 
full strong exceptional collection on $\PP_{\bf\Sigma}$.
\end{theorem}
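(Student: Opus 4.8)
By Proposition \ref{strongexc} the set $S$ is already a strong exceptional collection, so in view of Theorem \ref{bh} it remains only to show that the subcategory $D$ of the derived category generated by the line bundles in $S$ contains every line bundle on $\PP_{\bf\Sigma}$. The plan is to show that $D$ is stable under sliding the window $P+p$ in every direction: more precisely, I will show that if $D$ contains all line bundles whose image lies in $P+p$, then it contains all line bundles whose image lies in $P+p'$ for any $p'$ obtained from $p$ by a small generic translation, and then iterate. Since translates of $P$ cover all of ${\rm Pic}_\RR(\PP_{\bf\Sigma})$, this suffices.

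\textbf{The moving step.} Fix a direction of translation and consider a line bundle $\LL=\OO(E)$ whose image enters $P+p'$ as we cross a facet of $P+p$. Because $p$ (and each intermediate $p'$) is generic, the image of $\LL$ lands in the relative interior of a translate $p+\theta$ of a facet $\theta$ of $P$. The facet structure of $P$ is inherited from that of $\widehat P$ together with the two bounding inequalities $f(x)=\pm\tfrac12$, so $\theta$ is one of two kinds: either $f$ is constant and equal to $\pm\tfrac12$ on $\theta$ (a ``horizontal'' facet), or $\theta$ projects onto a facet $\theta_I$ of $\tfrac12\widehat P$ with $I=[i_1,j_1)\sqcup[i_2,j_2)$ (a ``vertical'' facet). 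In the first case I use the Koszul complex on $\CC^n$ associated with all the coordinates $z_1,\dots,z_n$, which resolves the structure sheaf of the origin $\notin U$; twisting by $\LL$ gives the exact sequence on $\PP_{\bf\Sigma}$
$$
0\to \OO(E-\textstyle\sum_{i=1}^n E_i)\to\cdots\to\bigoplus_{i=1}^n \OO(E-E_i)\to\LL\to 0,
$$
and I must check that all terms except $\LL$ have image already inside $P+p$; this comes from the inequalities $-1<f(-\sum_{i\in J}E_i)<0$ for $\emptyset\neq J\subsetneq\{1,\dots,n\}$ together with the fact that $P$ is a ``slab'' of width $1$ in the $f$-direction and the image of $\LL$ is near the $f=\pm\tfrac12$ face. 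In the second case I use the Koszul complex on the coordinates indexed by one of the four interval blocks $[i_1,j_1)$, $[i_2,j_2)$, $[j_1,i_2)$, $[j_2,i_1)$ — say $[i_1,j_1)$ — which again resolves the structure sheaf of a coordinate subspace outside $U$ (outside $U$ precisely because $I$, being the index set of a non-acyclic $C_I$, is a non-face of the fan, hence so is the complement of $[i_1,j_1)$ after the telescoping identification); twisting by $\LL$ gives
$$
0\to \OO(E-\textstyle\sum_{i\in[i_1,j_1)}E_i)\to\cdots\to\bigoplus_{i\in[i_1,j_1)}\OO(E-E_i)\to\LL\to 0,
$$
and Proposition \ref{movinglemma} is exactly what guarantees that every intermediate term $\OO(E-\sum_{i\in J}E_i)$, $\emptyset\neq J\subseteq[i_1,j_1)$, has image in the interior of $P+p$: the shift of the relative interior of $\theta_I$ by $-2\widehat E_{[i_1,j_1)}$ lands inside the interior of $\widehat P$, hence by convexity so do the partial shifts, and these track the images of the intermediate terms in $\widehat {\rm Pic}_\RR$; the $f$-coordinate stays in range for the same elementary reason as before. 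Either way, all but the last term of the complex lie in $D$, so $\LL\in D$.

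\textbf{The base case and the obstacle.} To start the induction I need $D$ to contain \emph{some} initial collection of line bundles beyond those in $S$ itself — but in fact the moving argument already bootstraps from $S$: every line bundle whose image is in $P+p$ is in $D$ by definition, and one sweep in each of finitely many directions propagates this to a neighborhood, after which one repeats. The only genuine subtlety, and the step I expect to be the main obstacle, is bookkeeping the order in which new line bundles appear as the window slides and verifying that at the moment $\LL$'s image crosses into the window, the images of \emph{all} the lower Koszul terms $\OO(E-\sum_{i\in J}E_i)$ are already strictly inside $P+p$ (not merely inside some earlier translate) — this is where the precise shape of $P$, the genericity of $p$, and Proposition \ref{movinglemma} must be combined carefully, exactly as in the $\mathrm{rk}(\mathrm{Pic})=2$ case of Theorem \ref{Pic2} but now with the more elaborate facet combinatorics of the zonotope $\widehat P$. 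Granting that this ordering works out — which Proposition \ref{movinglemma} is tailored to ensure — the induction closes and every line bundle lies in $D$, so $S$ generates the derived category and is therefore a full strong exceptional collection.
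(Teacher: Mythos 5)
Your overall strategy — use Proposition~\ref{strongexc} for strong exceptionality, then show the subcategory $D$ generated by $S$ is stable under sliding the window $P+p$ — is the same as the paper's, and the ``horizontal'' direction is handled with the same long Koszul complex. But the ``vertical'' step contains a genuine gap: the Koszul complex you propose need not be exact.

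You resolve the structure sheaf of $\{z_i=0,\ i\in[i_1,j_1)\}$ using the Koszul complex on the variables $z_i$, $i\in[i_1,j_1)$, and claim this gives an exact sequence on $\PP_{\bf\Sigma}$ because $I$ is a non-face of the fan. But exactness requires the \emph{arc} $[i_1,j_1)$ itself to be a non-face of $\Sigma$, i.e.\ no cone of $\Sigma$ should contain all $v_i$, $i\in[i_1,j_1)$. In dimension two, a consecutive arc of length $1$ or $2$ \emph{is} a cone of $\Sigma$, so the corresponding coordinate subspace meets $U$ and the twisted Koszul complex is not exact on $\PP_{\bf\Sigma}$. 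This is not an edge case: for $n=5$ the four arcs $[i_1,j_1),[j_1,i_2),[i_2,j_2),[j_2,i_1)$ have sizes $2,1,1,1$, so at least three of your four candidate complexes fail to be exact, and the one that is exact does not help when $x_1$ lies near the wrong end of the facet. Your parenthetical justification (``the complement of $[i_1,j_1)$ after the telescoping identification'') confuses the relevant condition: what must be a non-face is $[i_1,j_1)$, not $I$ or the complement, and that fails. A secondary gap, which you flag yourself as an ``obstacle'' but wave away by ``convexity,'' is that even if your long complex were exact you would need all intermediate twists $\OO(E-\sum_{i\in J}E_i)$, $\emptyset\neq J\subsetneq[i_1,j_1)$, to land in the window; Proposition~\ref{movinglemma} only controls the full shift by $\widehat E_{[i_1,j_1)}$ (and the other three full block-shifts), not the partial ones, and those $\widehat E_J$ need not lie on the segment from $0$ to $\widehat E_{[i_1,j_1)}$.

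The paper avoids both problems by using a \emph{different} Koszul complex: the one built from the two monomial sections $\prod_{i\in[j_1,i_2)}z_i$ and $\prod_{i\in[j_2,i_1)}z_i$, which have no common zero on $U$ precisely because every pair $(i,j)$ with $i\in[j_1,i_2)$, $j\in[j_2,i_1)$ is separated by the nonempty arcs $[i_1,j_1)$ and $[i_2,j_2)$ and hence is not a $2$-cone. This yields the \emph{three}-term exact sequence
$$
0\to \LL_1\to \LL_1\Big(\sum_{i\in[j_1,i_2)}E_i\Big)\oplus\LL_1\Big(\sum_{i\in[j_2,i_1)}E_i\Big)\to \LL_1\Big(\sum_{i\notin I}E_i\Big)\to 0,
$$
whose only nontrivial terms correspond exactly to the four shifts controlled by Proposition~\ref{movinglemma}, so no further convexity argument is needed. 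You will want to replace your per-block Koszul complexes with this two-section Koszul sequence (and its mirror with $-\sum_{i\in[i_1,j_1)}E_i$, $-\sum_{i\in[i_2,j_2)}E_i$), together with the $f$-comparison that the paper uses to decide which of the two forms applies.
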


\begin{proof}
Denote by $D$ the subcategory of the derived category of the 
coherent sheaves on $\PP_{\bf\Sigma}$ generated by 
the elements of $S$. 

We will first show that $D$ contains all line bundles $\LL$ 
whose image $x$ in $ {\rm Pic}_\RR(\PP_{\bf\Sigma})$
satisfies $\vert f(x-p)\vert \leq \frac 12$. Fix one such 
$\LL$ and the corresponding $x$.
Since $p$ is chosen generic, we may safely assume that 
no lattice points $y$ satisfy $\vert f(y-p)\vert = \frac 12$.
Consider the set of elements $p_1$ of  
${\rm Pic}_\RR(\PP_{\bf\Sigma})$ 
such that $f(p_1)=0$ and $p+p_1+P$ contains $x$ in 
its interior. This is a relatively open subset in the codimension
one subspace of ${\rm Pic}_\RR(\PP_{\bf\Sigma})$
characterized by $f(p_1)=0$. Pick a generic such $p_1$
and consider for all $t$ from $0$ to $1$ the collection
$S(t)$ of line bundles $\LL$ whose images in 
${\rm Pic}_\RR(\PP_{\bf\Sigma})$ lie in $p+tp_1+P$.

The assumption that $p$ and $p_1$ are generic implies 
that for all $t$ there are no lattice points on the codimension
two or higher faces of $p+tp_1+P$. Indeed, since there
are no lattice points that satisfy $\vert f(y-p) \vert = \frac 12$
we may assume that the face in question is a shift 
of the preimage of a face in $\frac 12\widehat P$ of codimension two 
or more. Since for a given $x$ 
the union of all possible $p+tp_1+P$ (as $p_1$ and $t$ vary) 
is a bounded
subset of ${\rm Pic}_\RR(\PP_{\bf\Sigma})$, there are only finitely
many lattice points that could be on the sides of 
some $p+tp_1+P$. For each such lattice point $y$
and each face $\theta$ of $P$ the condition $y\in p+tp_1+\theta$
cuts out a space of codimension at least two 
in the space of all possible $tp_1$. This in turn gives a codimension
at least one space of $p_1$ such that for some $t\in [0,1]$
there holds $y\in p+tp_1+\theta$. 

Similarly we may also assume that the boundaries
of $p+P$ and $p+p_1+P$
contain no lattice points. Assume that the line bundle
$\LL$ does not 
lie in $D$.  Since there are only finitely many lattice
points in $p+[0,p_1] + P$, the segment $[0,1]$ is subdivided
into a finite number of segments on which the set of lattice
point in $p+tp_1+P$ is constant. 
Consequently, we may consider the smallest value $t_1$ of $t$
such that there is a lattice point $x_1$ in $p+tp_1+P$ such 
that there is a line bundle $\LL_1$ which maps to it and 
does not lie in $D$. 

By the above argument, $x_1$ lies in a face $\theta$ 
of codimension one
of $p+t_1p_1+P$ which is the preimage of the shift of a 
facet $\widehat p + t_1\widehat p_1+\frac 12\widehat \theta$ of 
$\widehat p +t_1\widehat p_1+\frac 12 \widehat P$ under 
the projection map. The corresponding set $I$ is the union of
two intervals 
$$
I=[i_1,j_1)\sqcup [i_2,j_2).
$$
The facet $\theta$ contains $x_1$ and the points 
$p+t_1p_1 - \frac 12 \sum_{i\not\in I} E_i$,
$p+t_1p_1 + \frac 12 \sum_{i\in I} E_i$. 
We have
$$
f(p+t_1p_1 + \frac 12 \sum_{i\in I} E_i) - 
f(p+t_1p_1 - \frac 12 \sum_{i\not\in I} E_i)  = \frac 12\sum_{i=1}^n 
r_i=\frac 12.
$$
This allows us to decompose $p+t_1p_1+\theta$ into 
the union of two centrally symmetric polytopes, with 
centers of symmetry 
$p+t_1p_1 + \frac 12 \sum_{i\in I} E_i $
and 
$p+t_1p_1 - \frac 12 \sum_{i\not\in I} E_i$,
as in  Figure \ref{FigPic2-2}. To determine which
polytope $x_1$ belongs to, we need to compare
$f(x_1-p+\sum_{i\not\in I}E_i)$ to $\frac 12$.
We can safely assume that it is not equal to $\frac 12$
since $p$ is picked to be generic.

{\bf Case $f(x_1+\sum_{i\not\in I}E_i-p)<\frac 12$.}
In this case, the points 
$$x_1+\sum_{i\in [j_1,i_2)} E_i,~~~
x_1+\sum_{i\in [j_2,i_1)} E_i 
$$
lie in the interior of $p+t_1p_1+P$ and the point
$$x_1+\sum_{i\not\in I} E_i$$ 
lies in the interior of the opposite facet 
$2p+2t_1p_1-\theta$ of $p+t_1p_1+P$.
Indeed, the projections of the first two points to $\widehat 
{\rm Pic}_\RR(\PP_{\bf\Sigma})$ lie in the interior of
and the projection of the third point lies on the opposite facet
$\widehat p+t_1\widehat p_1-\theta_I$ of 
$\widehat p+t_1\widehat p_1 + \frac 12\widehat P$
by Proposition \ref{movinglemma}. It remains to check
the property $\vert f(y-p)\vert <\frac  12$ for each of these 
points. Since $f(x_1-p)>-\frac 12$ and $f(E_i)=r_i>0$,
we only need to check $f(y-p)<\frac 12$. The largest of these
values occurs for $f(x_1+\sum_{i\not\in I}E_i)$, which is less than 
$\frac 12$, by our assumption.

Observe that for small $\epsilon>0$ the three points of interest
lie in the interior of $p+(t_1-\epsilon)p_1+P$. Indeed,
by our assumption of minimality of $t_1$ the point $x_1$
does not lie in $p+(t_1-\epsilon)p_1+P$ for small positive 
$\epsilon$, which means that the value of the supporting 
function of $\theta$ is negative on $p_1$. As a result,
every point in the interior of the opposite face will lie in the 
interior of $p+(t_1-\epsilon)p_1+P$ for small $\epsilon>0$.
By the minimality assumption we conclude that the line
bundles $\LL_1(\sum_{i\in[j_1,i_2)} E_i)$,
$\LL_1(\sum_{i\in [j_2,i_1)}E_i)$ and $\LL_1(\sum_{i\not\in I} E_i)$
lie in the category $D$. We can then write a Koszul 
sequence on $\PP_{\bf\Sigma}$
$$
0\to \LL_1\to 
\LL_1(\sum_{i\in[j_1,i_2)} E_i)\oplus\LL_1(\sum_{i\in [j_2,i_1)}E_i)
\to \LL_1(\sum_{i\not\in I} E_i)\to 0
$$
which is exact, since the divisors $\sum_{i\in[j_1,i_2)} E_i$
and $\sum_{i\in[j_2,i_1)} E_i$ have no common zeroes 
in $\PP_{\bf\Sigma}$. This shows that $\LL_1$ is in $D$,
contradiction.

{\bf Case $f(x_1+\sum_{i\not\in I}E_i-p)>\frac 12$.}
Observe that 
$$
f(x_1-\sum_{i\in I}E_i -p) = f(x_1+\sum_{i\not\in I}E_i-p) -
\sum_{i=1}^n f(E_i) > \frac 12 - 1 = -\frac 12.
$$
Similarly to the previous case we can show, using Proposition
\ref{movinglemma}, that $\LL_1(-\sum_{i\in [i_1,j_1)}E_i)$,
$\LL_1(-\sum_{i\in [i_2,j_2)}E_i)$ and 
$\LL_1(-\sum_{i\in I}E_i)$ are in $D$. The Koszul short
exact sequence
$$
0\to \LL_1(-\sum_{i\in I} E_i)\to 
\LL_1(-\sum_{i\in[i_1,j_1)} E_i)\oplus\LL_1(-\sum_{i\in [i_2,j_1)}E_i)
\to \LL_1\to 0
$$
then finishes the argument. 

So we have shown that all $\LL$ which map to $x$ with
$\vert f(x-p)\vert \leq \frac 12$ lie in $D$.  By looking at shifts of 
any short exact Koszul complex we can then extend the range of values of $f(x)$ in both directions to finish the argument.
\end{proof}

\begin{remark}
In our desire to focus on the basic features of the problem,
we have restricted our attention to the del Pezzo case,
as opposed to the nef del Pezzo case. It is likely that
a slight modification of our argument would allow one to
handle the nef del Pezzo case as well. Indeed, our construction 
of polytope $\widehat P$ is continuous in vertices of $\Delta$,
and we should be able to take a limit as a side of $\Delta$ flattens.
We might no longer be able to guarantee in Proposition \ref{movinglemma} that the shifts of the interior of the face lie 
in the interior of $\widehat P$, but it is still likely that in 
the proof of Theorem \ref{delPezzo} the new points 
in $p+tp_1+ P$ could be expressed in terms of the  old ones
as $t$ increases. 
Another reason not to consider the nef case in this article
is that the work of Kawamata \cite{Ka2} assures that the derived
category of a nef del Pezzo toric stack is equivalent to that 
of the corresponding $K$-equivalent del Pezzo toric stack
obtained by only keeping the vertices of $\Delta$. Hence our 
results already guarantee the existence of a strong 
exceptional collection of objects in the derived category
of a nef del Pezzo toric stack, although these objects 
might not be line bundles.
\end{remark}

\section{The case of ${\rm rk(Pic)}=3$ and $\dim =2$}\label{sec.ex}
In this section we will illustrate the result and construction
of Sections \ref{sec.dP.prelim} and  \ref{sec.dP} 
in the case of $n=5$.

Let $\Delta=A_1A_2A_3A_4A_5$ be a convex pentagon
in $N=\ZZ^2$, with the vertices counted clockwise,
which contains $0$ in its interior. Let ${\bf\Sigma}$ be 
the corresponding stacky fan and $\PP_{\bf\Sigma}$ the corresponding del Pezzo DM stack.
As before, we denote by $v_i$ the vector from $0$ to $A_i$
and by $E_i$ the corresponding elements of the Picard 
group. We will introduce the notation
$$\widehat {\rm Pic}_\RR(\PP_{\bf\Sigma})
={\rm Pic}_\RR(\PP_{\bf\Sigma})/\RR K$$
where $K$ is the canonical class. 
We will abuse the notation and denote by $E_i$
the image of $\OO(E_i)$ in 
${\rm Pic}_\RR(\PP_{\bf\Sigma})$.
We will use the notation $\widehat E_i$ for the image of $E_i$
in $\widehat {\rm Pic}_\RR(\PP_{\bf\Sigma})$.

The polytope $Q$ 
in $\widehat{\rm Pic}_\RR(\PP_{\bf\Sigma})$ is given in Figure
\ref{Fig1}. 
It is a convex centrally symmetric 
$10$-gon.

\begin{figure}[tbh] 
\begin{center}
\includegraphics[scale = .7]{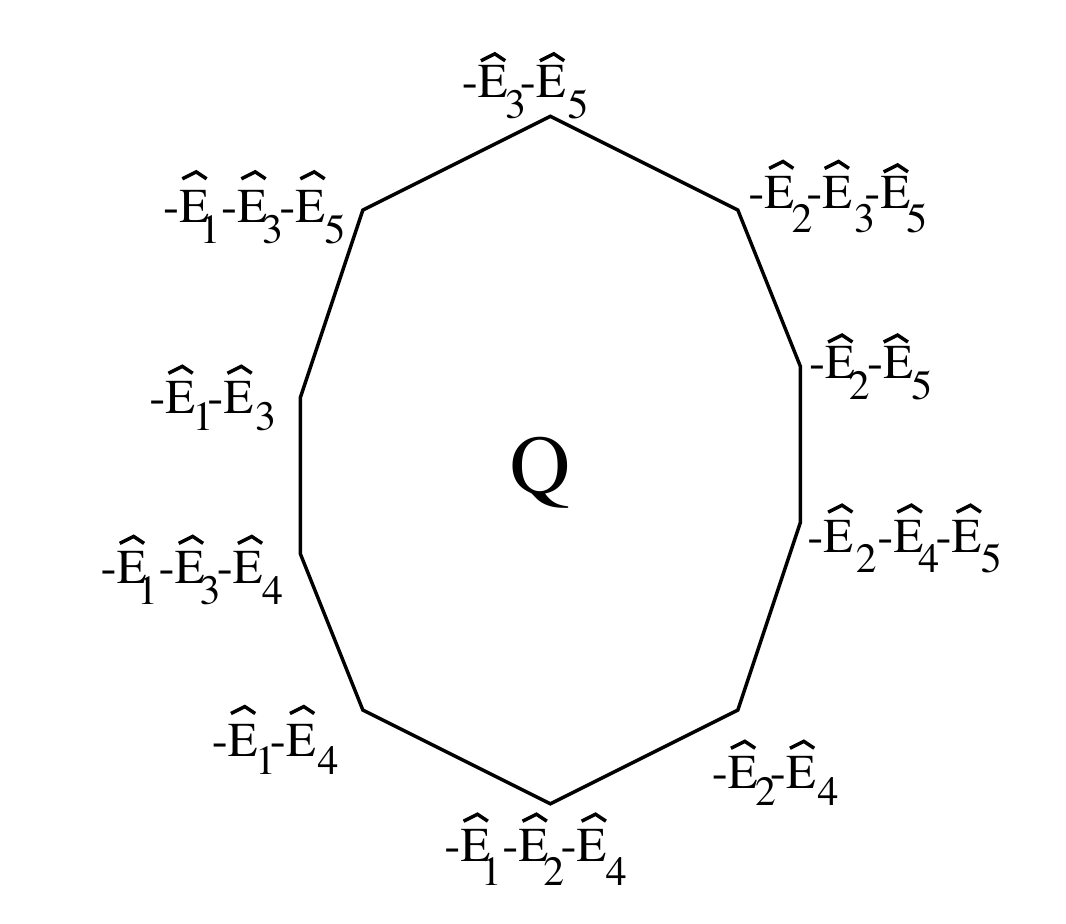}
\end{center}
\caption{\label{Fig1}}
\end{figure}

The projections of the forbidden cones for proper subsets 
$I\subset \{1,\ldots, 5\}$ are given in Figure \ref{Fig2}.
The complement of it is the acyclic region, in the sense that 
any line bundle $\LL$ which projects to it has no middle
cohomology.

\begin{figure}[tbh] 
\begin{center}
\includegraphics[scale = .7]{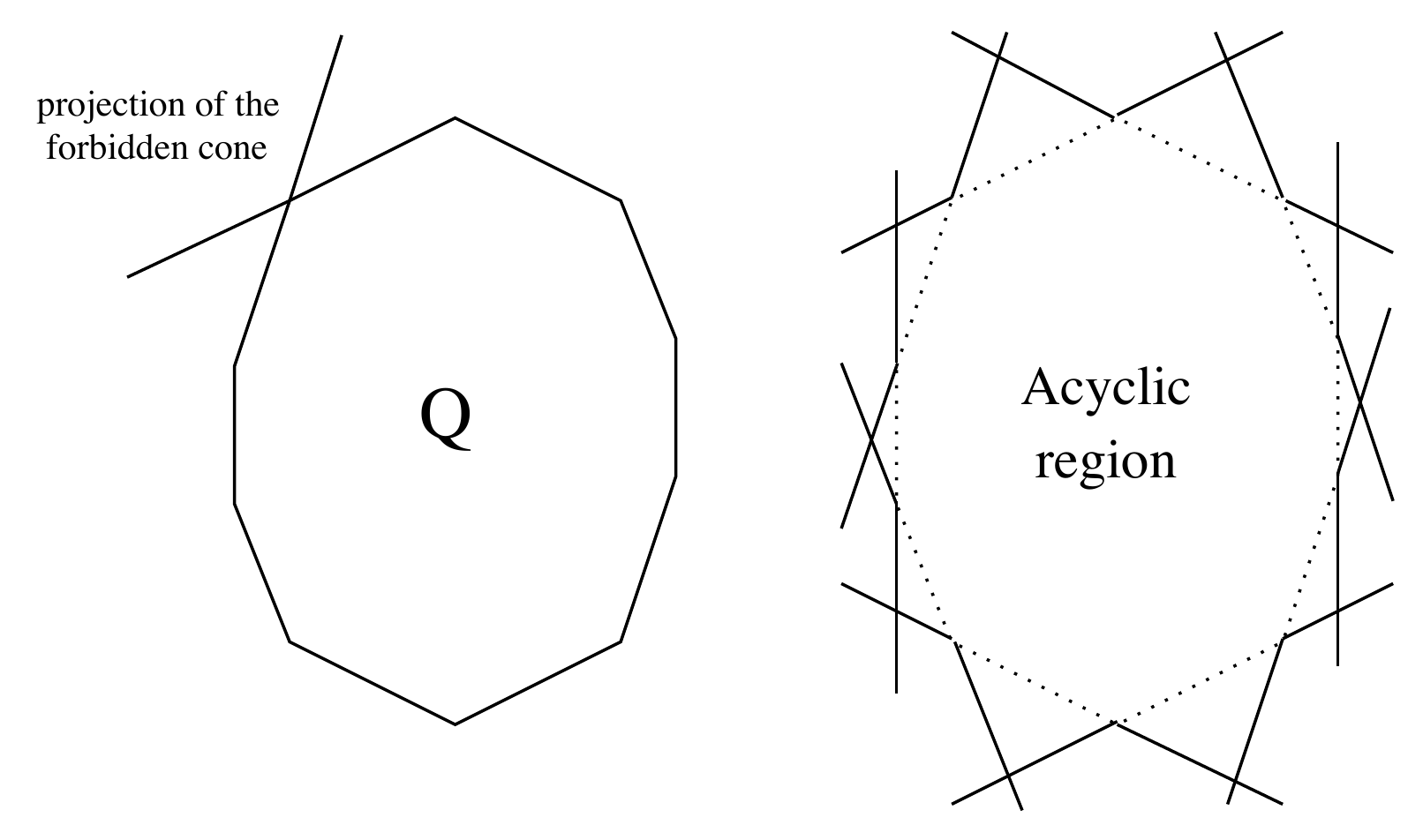}
\end{center}
\caption{\label{Fig2}}
\end{figure}

For any point $V$ in $\widehat{\rm Pic}_\RR(\PP_{\bf\Sigma})$, we can consider the points obtained from it by flipping it across the vertices of $Q$. A flip of a point $A$ across a point $B$ is $2B-A$.
It is easy to see that $\sum_{i=1}^5 \widehat E_i=0$ implies that 
if one starts with a point $V$ and flips it across $-\widehat E_1-
\widehat E_3$, then the $10$-th vertex is again $V$,
and the ten vertices are 
$$
\begin{array}{c}
V, -V-2\widehat E_3-2\widehat E_5,V-2\widehat E_2,-V-2\widehat E_5, V-2\widehat E_2-2\widehat E_4,
\\
-V, V+2\widehat E_3+2\widehat E_5, -V+2\widehat E_2, V+2\widehat E_5, -V+2\widehat E_2+2\widehat E_4.
\end{array}
$$

It is a priori not obvious that one can pick $V$ in such 
a way that the resulting ten points form vertices of a 
convex polytope that contains $Q$. However, by Proposition
\ref{allabouthatP} there exists a convex polygon $\widehat P$ such 
that the midpoints of its edges are the vertices of $Q$. Hence
one can pick $V$ so that 
the above 10 points are vertices of $\widehat P$.
In particular, the interior of $\widehat P$ lies in the acyclic region,
see Figure \ref{Fig3}.

\begin{figure}[tbh] 
\begin{center}
\includegraphics[scale = .7]{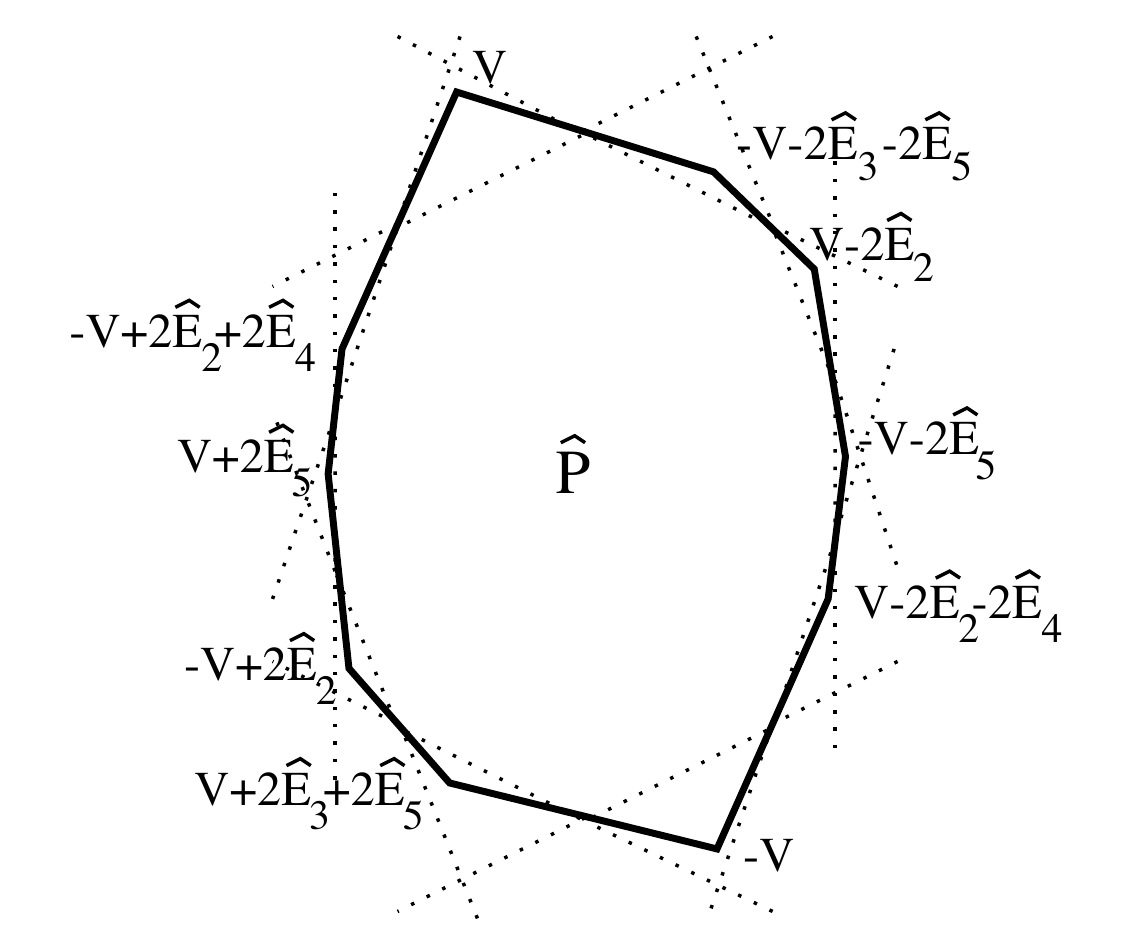}
\end{center}
\caption{\label{Fig3}}
\end{figure}

Proposition \ref{movinglemma} can be stated as 
follows. It will be convenient to consider the
indices $i$ of $A_i$ to lie in $\ZZ/5\ZZ$.
\begin{proposition}\label{move}
For an edge of $\widehat P$ that contains 
$-\widehat E_{i-1}-\widehat E_{i+1}$, the translates of its interior by  
$2\widehat E_{i-1}$, $2\widehat E_{i+1}$, $-2\widehat E_{i}$ and $-2(\widehat E_{i-2}+\widehat E_{i+2})$
lie in the interior of $\widehat P$.
For an edge of $\widehat P$ that contains $-\widehat E_{i}-\widehat E_{i-2}-\widehat E_{i+2}$,
the translates of its interior by $2\widehat E_i$, $2(\widehat E_{i-2}+\widehat E_{i+2})$,
$-2\widehat E_{i-1}$ and $-2\widehat E_{i+1}$ lie in the interior of $\widehat P$.
\end{proposition}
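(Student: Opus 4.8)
The plan is to derive Proposition \ref{move} from Proposition \ref{movinglemma} by spelling out the combinatorics of $\widehat P$ when $n=5$, so that nothing beyond cyclic index arithmetic is needed. First I would recall, using Proposition \ref{facesofP} together with Proposition \ref{allabouthatP}, that the facets (here, edges) of $\widehat P$ are exactly the faces $\theta_I$ attached to proper subsets $I\subset\ZZ/5\ZZ$ for which $C_I$ has exactly two connected components, i.e. for which $I$ is a disjoint union of two cyclic intervals $[i_1,j_1)\sqcup[i_2,j_2)$ with $i_1,j_1,i_2,j_2$ ordered clockwise, and that the midpoint of $\theta_I$ is $\widehat E_I=\sum_{i\in I}\widehat E_i$. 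For such an $I$, Proposition \ref{movinglemma} states precisely that the translates of the relative interior of $\theta_I$ by $-2\widehat E_{[i_1,j_1)}$, $-2\widehat E_{[i_2,j_2)}$, $2\widehat E_{[j_1,i_2)}$ and $2\widehat E_{[j_2,i_1)}$ lie in the interior of $\widehat P$.

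Next I would translate this into the notation of Proposition \ref{move} using the relation $\sum_{i=1}^5\widehat E_i=0$, which yields
$$
-\widehat E_{i-1}-\widehat E_{i+1}=\widehat E_i+\widehat E_{i-2}+\widehat E_{i+2},\qquad -\widehat E_i-\widehat E_{i-2}-\widehat E_{i+2}=\widehat E_{i-1}+\widehat E_{i+1}.
$$
Since the only way to write $5$ as a sum of four positive integers is $2+1+1+1$, every edge of $\widehat P$ is of exactly one of two types, according to whether the associated $I$ has three or two elements. In the three-element case, an edge containing $-\widehat E_{i-1}-\widehat E_{i+1}$ is $\theta_I$ with $I=[i,i+1)\sqcup[i+2,i+4)$ (recall $i-2=i+3$ in $\ZZ/5\ZZ$), and the four vectors of Proposition \ref{movinglemma} evaluate to $-2\widehat E_i$, $-2(\widehat E_{i-2}+\widehat E_{i+2})$, $2\widehat E_{i+1}$ and $2\widehat E_{i-1}$, which is the first assertion of Proposition \ref{move}. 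In the two-element case, an edge containing $-\widehat E_i-\widehat E_{i-2}-\widehat E_{i+2}$ is $\theta_I$ with $I=[i-1,i)\sqcup[i+1,i+2)$, and the four vectors evaluate to $-2\widehat E_{i-1}$, $-2\widehat E_{i+1}$, $2\widehat E_i$ and $2(\widehat E_{i-2}+\widehat E_{i+2})$, which is the second assertion. Letting $i$ run over $\ZZ/5\ZZ$ in each case produces $5+5=10$ edges, which is all of them, so every edge of $\widehat P$ is covered.

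The main point to be careful about is the index bookkeeping in $\ZZ/5\ZZ$: one must pick the correct clockwise labels $i_1,j_1,i_2,j_2$ of the two intervals making up $I$ and pair the four half-open arcs $[i_1,j_1)$, $[i_2,j_2)$, $[j_1,i_2)$, $[j_2,i_1)$ with the four translation vectors in Proposition \ref{movinglemma} without transposing them. There is otherwise no genuine obstacle: the geometric content, namely that the shifted open edge falls strictly inside $\widehat P$, is exactly what Proposition \ref{movinglemma} provides, its proof there reducing to the fact that after deleting a shifted arc the two relevant diagonals of $\Delta$ no longer cross. One could instead read Proposition \ref{move} off the explicit decagon of Figure \ref{Fig3}, but the deduction above avoids relying on a figure.
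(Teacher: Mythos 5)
Your derivation is correct and is exactly the paper's intended argument: the paper offers no separate proof, introducing Proposition \ref{move} with the phrase that Proposition \ref{movinglemma} ``can be stated as follows'' in the $n=5$ case, which is precisely the specialization you carry out. Your index bookkeeping (identifying the two edge types with $I=[i,i+1)\sqcup[i+2,i+4)$ and $I=[i-1,i)\sqcup[i+1,i+2)$ and matching the four shift vectors) checks out.
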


For what follows we pick and fix a generic collection of positive numbers $r_i, i=1,\ldots, 5$, 
such that $\sum_i r_i=1$ and $\sum_i r_iv_i=0$. This collection
gives a linear function $f$ on
${\rm Pic}_\RR(\PP_{\bf\Sigma})$ by $f(E_i)=r_i$.

The convex polytope $P$ in ${\rm Pic}_\RR(\PP_{\bf\Sigma})$
given by the inequalities $\vert f(x)\vert\leq \frac 12$
and the condition that the image of $x$ in $\widehat{\rm Pic}_\RR(\PP_{\bf\Sigma})$
lies in $\frac 12\widehat P$ is depicted in Figure \ref{Fig4}.
The polytope $P$ has two $10$-gonal faces
and $10$ parallelogram faces that are preimages 
of the sides of the pentagon $\frac 12 \widehat P$, see
Figure \ref{Fig4}.

\begin{figure}[tbh] 
\begin{center}
\includegraphics[scale = .7]{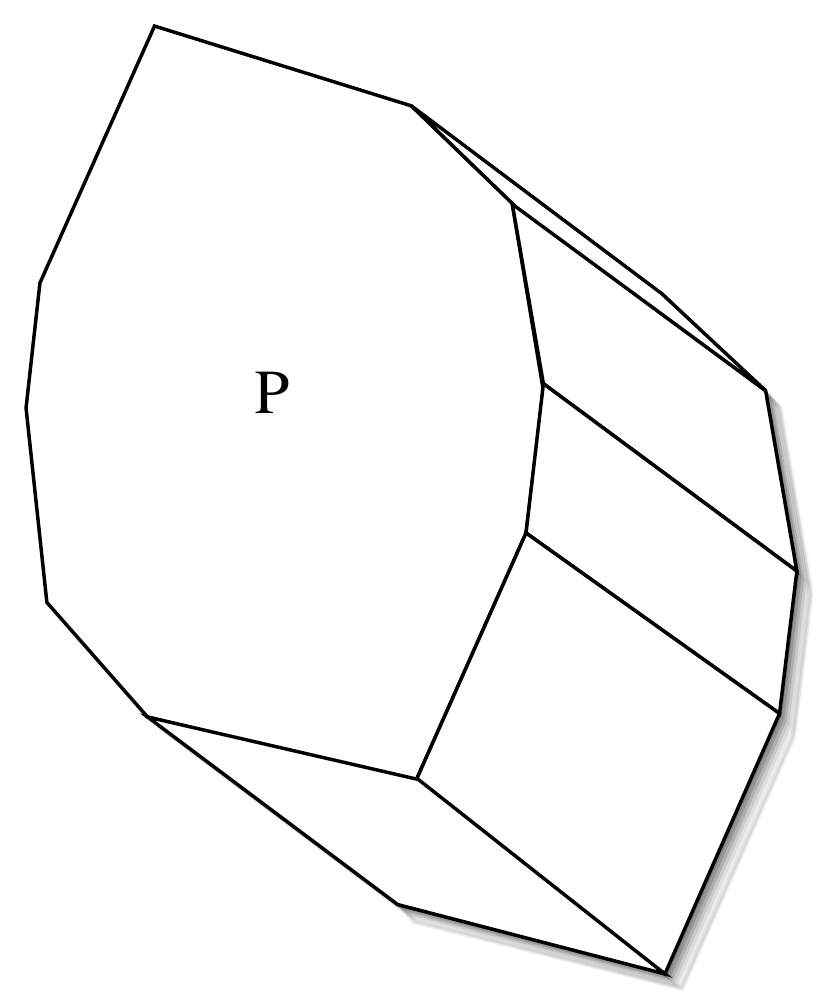}
\end{center}
\caption{\label{Fig4}}
\end{figure}

The proof of Theorem \ref{delPezzo} can be visualized as
follows. We place the polytope $P$ somewhere generically
in ${\rm Pic}_\RR(\PP_{\bf\Sigma})$ by considering 
its shift $p+P$. The set of line bundles
whose images in ${\rm Pic}_\RR(\PP_{\bf\Sigma})$ lie 
in $p+P$ form a strong exceptional collection. Indeed, the differences
avoid the forbidden cone $F_\emptyset$ with the vertex 
$q_\emptyset=-\sum_{i=1}^5 E_i$
because they have $f(\cdot)>-1$, and they avoid the other
forbidden cones because their image in 
$\widehat {\rm Pic}_\RR(\PP_{\bf\Sigma})$ is contained 
in $\widehat P$ and hence in the acyclic region. 
We then define the category $D$ generated by the line bundles 
in this strong exceptional collection. We first move the polytope
$p+P$ in generic directions that are parallel to its 10-gonal facets.
As the polytope moves, any new points can be connected
to already covered points by means of Proposition \ref{move}.
This in turn leads to Koszul complexes which allow one
to show that the corresponding line bundles lie in $D$. 
After we have guaranteed that all points between the 
supporting planes of the 10-gonal facets of $p+P$ correspond
to line bundles in $D$, we use Koszul complexes to 
extend in the orthogonal direction, as in the 
second panel of Figure \ref{FigPic2-3}.

\section{Comments}\label{comments}
It is natural to try to apply 
the techniques of this paper to the general
case of King's conjecture. For an arbitrary rank of the Picard 
group, and arbitrary dimension, one can still define the polytope 
$Q$ in $\widehat{\rm Pic}_\RR(\PP_{\bf\Sigma})$ as 
the Minkowski sum of $[0,\widehat E_i]$.  
One then wants to construct a polytope $\widehat P\supseteq Q$ 
with the property that all vertices of $Q$ that correspond to 
forbidden cones are midpoints of some of the faces of $\widehat P$
and that midpoints of all facets of $\widehat P$ are images of 
the vertices of the forbidden cones.

It is not a priori clear that $\widehat P$ should be a zonotope, 
although this is a plausible assumption. However even assuming 
that it is, the combinatorics of it is generally unclear and remains
the key challenge. Once the polytope $\widehat P$ is constructed, we can define the polytope $P$ in
${\rm Pic}_\RR(\PP_{\bf\Sigma})$ as in Section \ref{sec.dP}.
It remains to be seen whether this approach will
lead to a proof of Conjecture \ref{fsec.conj},
but it appears promising.

Even in its current state the paper can be applied to the study
of (noncompact) toric Calabi-Yau threefolds, which are defined
by triangulations of the polygon $(\Delta,1)$ in $N\oplus \ZZ$.

\end{document}